\newif\ifPDF
\newtheorem{theorem}{Theorem}[section]
\newtheorem{lemma}[theorem]{Lemma}
\newtheorem{proposition}[theorem]{Proposition} 
\newtheorem{remark}[theorem]{Remark} 
\newtheorem{corollary}[theorem]{Corollary}
\newcommand{\bbR}{\mathbb R} \newcommand{\bbS}{\mathbb S}
\newcommand{\bzeta}{\boldsymbol \zeta}
\newcommand{\bLambda}{\boldsymbol \Lambda}
\newcommand{\bSigma}{{\boldsymbol\Sigma}}
\newcommand{\ba}{\mathbf a} \newcommand{\bb}{\mathbf b}
\newcommand{\bc}{\mathbf c} 
\newcommand{\be}{\mathbf e}
\newcommand{\bu}{\mathbf u} \newcommand{\bv}{\mathbf v} 
\newcommand{\bw}{\mathbf w} \newcommand{\bx}{\mathbf x} 
\newcommand{\by}{\mathbf y} \newcommand{\bz}{\mathbf z} 
\newcommand{\bA}{\mathbf A} \newcommand{\bB}{\mathbf B}
\newcommand{\bG}{\mathbf G} 
\newcommand{\bI}{\mathbf I} 
\newcommand{\bK}{\mathbf K} 
 \newcommand{\bP}{\mathbf P} 
 \newcommand{\bR}{\mathbf R}
\newcommand{\bU}{\mathbf U} \newcommand{\bV}{\mathbf V}
\newcommand{\bW}{\mathbf W}
\newcommand{\cG}{\mathcal G} 
\newcommand{\cI}{\mathcal I} 
\newcommand{\cK}{\mathcal K} \newcommand{\cL}{\mathcal L}
 \newcommand{\cN}{\mathcal N}
\newcommand{\cO}{\mathcal O}  
\newcommand{\cS}{\mathcal S} 
\newcommand{\cU}{\mathcal U}
\newcommand{\chapterauthor}[1]{%
	{\parindent0pt\vspace*{-25pt}%
		\linespread{1.1}\large\scshape#1%
		\par\nobreak\vspace*{35pt}}
	\@afterheading%
}
\newcommand{\ReLU}{\text{ReLU}}
\newcommand{\PINN}{\text{PINN}}
\newcommand{\FEMsp}{\text{FEMsp}}
\newcommand{\DRM}{\text{DRM}}
\title{What Can One Expect When Solving PDEs Using Shallow Neural Networks?}
\author[1]{Roy Y. He}
\author[2]{Ying Liang}
\author[2]{Hongkai Zhao}
\author[3]{Yimin Zhong}
\affil[1]{Department of Mathematics, City University of Hong Kong, Kowloon Tong, Hong Kong}
\affil[2]{Department of Mathematics, Duke University, Durham, NC, USA}
\affil[3]{Department of Mathematics and Statistics, Auburn University, Auburn, AL, US}
\date{}
\begin{document}
\maketitle
\begin{abstract}
We use elliptic partial differential equations (PDEs) as examples to show various properties and behaviors when shallow neural networks (SNNs) are used to represent the solutions. In particular, we study the numerical ill-conditioning, frequency bias,  and the balance between the differential operator and the shallow network representation for different formulations of the PDEs and with various activation functions. Our study shows that the performance of Physics-Informed Neural Networks (PINNs) or Deep Ritz Method (DRM) using linear SNNs with power ReLU activation is dominated by their inherent ill-conditioning and spectral bias against high frequencies. Although this can be alleviated by using non-homogeneous activation functions with proper scaling, achieving such adaptivity for nonlinear SNNs remains costly due to ill-conditioning.  
\end{abstract}

\section{Introduction}
Neural networks (NNs) provide a special form of nonlinear parametrization for approximating functions. There are extensive results in terms of mathematical approximation theory for NNs. One of the main themes is showing universal approximation property (UAP) of NNs~\cite{cybenko1989approximations,barron2002universal,devore2021neural,kidger2020universal}. The general principle is that, under certain conditions, NNs can approximate functions in specific classes to arbitrary accuracy. Recently, more concrete approximation error estimates in terms of NNs' architectural parameters, i.e., depth, width, and height, as well as various activation functions, have been obtained~\cite{lu2017expressive, eldan2016power,shen2019deep, zhang2022deep}. However, these existing mathematical results do not address the key practical challenges: how accurately and at what computational cost a solution can be found by solving a high-dimensional, non-convex optimization problem, which is typically the computational bottleneck. For example, it was shown in \cite{zhang2025shallow} that, for two-layer NNs, the strong correlation among randomly parametrized global activation functions leads to ill-conditioning and strong frequency bias against high frequencies in both representation and learning. Consequently, it is difficult to train a two-layer neural network to accurately approximate a function with very high-frequency components. 

Naturally, NNs can be used to represent or approximate the solutions of PDEs. This approach has been explored in numerous works, such as Physics-Informed Neural Networks (PINNs)~\cite{raissi2019physics,raissi2020hidden}, the Deep Ritz Method (DRM)~\cite{weinan2018deep},  Weak Adversarial Networks (WAN)~\cite{zang2020weak}, and  others~\cite{lu2021learning,li2020fourier,de2024wpinns}. Significant effort has been devoted to the theoretical justification of these methods. For instance, the consistency of PINNs for linear second-order elliptic and parabolic PDEs was established in~\cite{shin2020convergence}, and their generalization error was analyzed in~\cite{mishra2023estimates}. Analogous results exist for the DRM~\cite{muller2019deep, duan2022convergence, lu2021priori, muller2022error}. The general conclusion from these studies is that, given sufficient data and appropriate handling of the boundary conditions, the minimizer of the associated optimization problem converges to the PDE solution. The NN-based methods for solving PDEs can potentially offer significant advantages, such as being mesh-free, simple to implement, scalable to high dimensions~\cite{hinton2006reducing}, and inherently adaptive~\cite{suzuki2018adaptivity}.

However, this nonlinear parameterization complicates the formulation of the optimization problem itself, specifically in how to enforce the differential equation along with its boundary and initial conditions.  This introduces a significant computational hurdle: solving a large-scale, non-convex optimization problem. In practice, this is most often done using gradient-based methods, which are highly sensitive to ill-conditioning~\cite{saarinen1993ill}. 
Consequently, practitioners frequently rely on a suite of heuristic techniques~\cite{wang2023expert}. This performance gap has attracted significant attention in recent studies~\cite{cuomo2022scientific,krishnapriyan2021characterizing,basir2022investigating,song2024does,cao2025analysis}. For instance, Krishnapriyan et al.~\cite{krishnapriyan2021characterizing} observed that PDE-based differential operators can introduce ill-conditioning, while Basir~\cite{basir2022investigating} investigated how gradients contaminated by high-order differential operators impede training. Collectively, this body of numerical evidence highlights critical computational challenges concerning the accuracy and efficiency of NN-based PDE solvers. A crucial computational question arises: what can we expect regarding the accuracy, stability, convergence, and computational cost when using NNs to solve PDEs?

In this work, we present an in-depth analysis and computational study on solving elliptic PDEs using shallow neural networks (SNNs), i.e., two-layer NNs. Using spectral analysis, we demonstrate the competition between the inherent ill-conditioning and frequency bias for high frequencies of a differential operator and the ill-conditioning and frequency bias against high frequencies of an SNN representation for PINN and DRM. We show that the frequency bias towards low frequency, which is a phenomenon also known as the frequency principle~\cite{rahaman2019spectral,xu2024overview}, leads to different levels of ill-conditioning for various activation functions.
As a consequence, when gradient-based training is used, the smooth component of the solution is captured relatively quickly. In contrast, the high-frequency component will be recovered slowly if possible (depending on the computation cost and machine precision or noise level). Hence, SNNs struggle to capture high frequency and achieve high accuracy when the PDE solution contains significantly high-frequency components.

Note that this is in contrast to the behavior of traditional FEMs when iterative methods are used to solve the discretized linear system without preconditioning, in which case the corresponding differential operator induces the frequency bias. At the same time, the FEM representation itself has no frequency bias. Effective preconditioning techniques, such as multi-grids~\cite{zhu2006multigrid} and domain decomposition~\cite{cowsar1995balancing}, have been well developed for FEMs.

We focus on the ill-conditioning in PINNs and the DRM, which is directly caused by the spectral bias of SNNs. First, we conduct the aforementioned studies on linear models, which are closely related to the random basis method. In this well-controlled scenario, we analyze the spectrum of Gram matrices associated with power ReLU activation functions and study both direct solvers and iterative gradient-based methods. We also examine the performance when boundary conditions are enforced as either constraints or regularizations. Through this comprehensive study, we demonstrate that the ill-conditioning and frequency bias of SNN representations lead to unstable reconstruction and an ineffective training process. We then extend our study to other activation functions. We find that scaling the random initialization can alleviate frequency bias and significantly improve approximation accuracy. The benefits of such scaling have been noted in prior works on function approximation~\cite{zhang2025shallow,sitzmann2020implicit}. However, the scaling strategy can only achieve the potential of linear random basis methods because the SNN representation still cannot capture frequencies or features beyond what the current random basis can resolve, and hence cannot lead to effective training of adaptivity, which is crucial for a nonlinear representation. 

On the one hand, multi-layer NNs can utilize combinations and compositions of smooth functions to approximate complex functions effectively, as demonstrated in \cite{zhang2025shallow,zhang2024structured}, and can potentially significantly reduce, if not eliminate, the frequency bias of SNNs. On the other hand, one still has to deal with the ill-conditioning and frequency bias induced by the underlying differential operators and overcome the challenges of solving a high-dimensional non-convex optimization problem when using multi-layer NNs to solve PDEs. We will report our investigation of this interesting and challenging problem in future work.

To summarize, our contributions in this work are as follows:

\begin{itemize}
\item An explicit analysis of the ill-conditioning and frequency bias of both PINN and DRM using SNNs with power ReLUs as activation functions. 
\item Characterization of the numerical stability of the direct solvers and training dynamics of iterative gradient-based methods through the frequency bias of the SNNs.

\item A comprehensive set of numerical experiments conducted under carefully controlled conditions. We meticulously examine PINN and DRM formulations using both boundary constraints and regularization techniques, solved directly as linear systems or iteratively with first-order optimization methods.
 \end{itemize}

This paper is organized as follows. In Section~\ref{sec_related}, we review the most relevant literature. Section~\ref{sec_NN_poisson} describes the experimental setups we focus on, specifically investigating the solution's accuracy of PINNs and the DRM using linear SNNs with power ReLU activations for the Poisson equation with Dirichlet boundary conditions. We analyze various settings and optimization strategies. Section~\ref{sec_other_activate} extends the study numerically to explore the spectral properties of neural networks with other activation functions, and we discuss additional challenges that arise when all parameters are learnable. Finally, we present our conclusions in Section~\ref{sec_conclude}.
 
\section{Related Works}\label{sec_related}
We begin with well-developed classical methods that utilize representations linear in their respective parameters or unknowns. In particular, by reviewing the most popular and successful approach, Galerkin methods, such as finite element methods (FEMs), for second-order elliptic PDEs, we discuss some of the computational challenges for solving PDEs, especially in terms of the mathematical formulation and efficient solvers.  More importantly, we highlight the effects of ill-conditioning and frequency bias on iterative methods, including gradient descent optimization, which sheds light on our subsequent investigations into PINN and DRM. We also review recent theoretical and computational works on using NNs to solve PDEs.

\subsection{Classical methods}
Galerkin-type methods, such as FEMs, are the most popular and successful approaches in solving elliptic PDEs in general domains~\cite{ciarlet2002finite} in three and lower dimensions. They rely on a weak formulation equivalent to the original PDE in an appropriate function space. Such a flexible framework is not only the mathematical foundation for error estimates and convergence analysis but also crucial for numerical computations. By discretizing the domain with a mesh and introducing a finite dimensional linear space with compactly supported basis functions to approximate the underlying solution space, one obtains a linear system for the unknown parameters (coefficients) of the linear representation. Due to the localized basis functions, the linear system consists of a sparse matrix, called the \textit{stiffness matrix}. 

The stiffness matrix is ill-conditioned, which is inherited from and consistent with the differential operator~\cite{bertaccini2018iterative, bramble1990parallel, mandel1993balancing}. For example, using the standard piecewise linear finite element basis, while there is no frequency bias in the representation, i.e., the condition number of the Gram (mass) matrix of the basis is of $\cO(1)$, a second-order differential operator, such as the Laplacian, amplifies each of its eigenmodes by its frequency squared.  This leads to the ill-conditioning of the stiffness matrix of the discretized linear system with a condition number of order $\cO(h^{-2})$, where $h$ is the mesh size and $h^{-1}$ is the order of the highest frequency that the mesh can resolve. To solve the resulting linear sparse system, iterative methods are usually the natural choices; however, ill-conditioning often leads to their slower convergence (\cite[Section 4.2]{saad2003iterative}) if without preconditioning.

Moreover, the ill-conditioning inherited from the differential operator introduces a frequency bias against low frequencies. Specifically, when using an iterative solver, the errors in high-frequency modes decay rapidly, while the errors in low-frequency modes decay slowly. Consequently, naive iterative methods are inefficient even for recovering smooth solutions, rendering them impractical to use in real applications. To address this, effective preconditioning techniques~\cite{bertaccini2018iterative},  multigrid~\cite{bramble1990parallel},  and domain decomposition~\cite{mandel1993balancing} have been well developed to solve elliptic PDEs efficiently. 

Noticeably, ill-conditioning becomes even more severe if a strong formulation is adopted when using finite element basis functions to solve a second-order elliptic PDE. This approach not only requires a higher-order finite element basis to ensure more regularity of the basis functions, which is more complex to construct, but also leads to a discrete linear system with a condition number of $\mathcal{O}(h^{-4})$, yielding an even stronger bias against low frequencies.

The main challenges of traditional Galerkin methods include the requirement for a good-quality mesh that conforms to the domain geometry~\cite{cottrell2009isogeometric}, the construction of basis functions~\cite{vzenivsek1970interpolation,zhang2009family}, and the computational complexity that grows exponentially with the problem's dimension~\cite{bellman1959mathematical}. Although adaptive mesh techniques can improve efficiency by locally refining regions with rapid solution changes guided by \textit{a posteriori} error estimates, they still require the initial mesh to resolve these local details to some degree. Furthermore, the complexity of this adaptive approach grows quickly as the dimensionality increases. 

We remark that Galerkin methods, along with their associated mathematical theory, algorithms, and commercial software implementations, have undergone significant development over the past half-century. Consequently, they remain the benchmark for comparison in problems of three dimensions or fewer.

\subsection{Using neural network representation to solve PDEs}

Neural networks (NNs), as a class of parametrized, nonlinear representations of functions, require no mesh, fit to high dimensions, and, most importantly, have the potential for adaptivity. Furthermore, Universal Approximation Theorems (UATs) guarantee their ability to approximate functions in various spaces, including smooth and analytic functions~\cite{mhaskar1996neural}, Sobolev spaces~\cite{guhring2020error}, and Besov spaces~\cite{siegel2023optimal}. This strong theoretical foundation makes NNs a natural ansatz for solving PDEs~\cite{raissi2019physics,raissi2020hidden,weinan2018deep,zang2020weak}. These representations are later confirmed to be consistent and convergent for broad classes of PDEs under various settings~\cite{shin2020convergence,mishra2023estimates,luo2024two,shin2023error,he2018relu,duan2021convergence, xu2020finite,kharazmi2019variational}. 
However, realizing this potential is challenging due to the associated nonlinear and nonconvex optimization problems. Achieving both accuracy and efficiency in this context thus constitutes a major mathematical and computational challenge~\cite{xu2025understanding}.

Recently, a significant body of computational and theoretical efforts has emerged on NN-based PDE solutions~\cite{kuvakin2025weak,chen2022bridging,raissi2019physics,raissi2020hidden,weinan2018deep,lu2021learning,li2020fourier,de2024wpinns,zang2020weak}. Nevertheless, in comparison to well-developed traditional methods for practical 2D and 3D problems, the NN representations used in many computational studies are typically inferior in terms of both accuracy and efficiency~\cite{chen2024optimization}. This is often compounded by a reliance on ad-hoc parameter tuning. Consequently, the theoretical results on convergence or error estimates frequently fail to address the critical computational challenges. These analyses often rely on existence arguments, which assume the successful discovery of a global minimizer for a large-scale, non-convex optimization problem, or they operate in the limit of infinite width or the Neural Tangent Kernel (NTK) regime~\cite{jacot2018neural, luo2024two}. These theoretical regimes are far from practical implementations and overlook the severe ill-conditioning inherent in the representations within these regimes, which renders effective optimization impossible.

When NNs are used as a nonlinear parametric representation to solve PDEs, two fundamental questions must be addressed: (1) the \textbf{formulation} of an appropriate optimization problem, i.e., how to enforce the differential equation along with its boundary and initial conditions; and (2) the \textbf{optimization} itself, which involves solving a large-scale, non-convex problem for which mainly gradient-based methods are computationally affordable. The interplay between the above two factors has manifested through different lenses and been reported in many works~\cite{cuomo2022scientific,krishnapriyan2021characterizing,basir2022investigating,song2024does,cao2025analysis}. For instance, Krishnapriyan et al.~\cite{krishnapriyan2021characterizing} observed that the PINN's loss function's landscape becomes increasingly complex when strong boundary regularization is forced. They demonstrated that enhancing the expressivity of NNs does not amend the optimization difficulty. Song et al.~\cite{song2024does} showed that higher PDE order causes slower convergence of gradient flow. 

Indeed, ill-conditioning can pose a severe computational challenge in practice, 
even for quadratic convex optimization problems solved either directly or iteratively. We demonstrate that this issue does happen for both PINN and DRM formulations using shallow neural networks (SNNs) during the training process. Along this direction, two of the most relevant works are~\cite{hong2022activation} and~\cite{zhang2025shallow}. Hong et al.~\cite{hong2022activation} analyzed the eigenvalue structures of Gram matrices associated with SNNs using ReLU activation and revealed their connection to the spectral bias of SNNs on evenly distributed bias in one dimension. Zhang et al.~\cite{zhang2025shallow} analyzed the ill-conditioning in general situations and for powers of ReLU activation in any dimensions and studied the implications of the decaying spectrum on numerical accuracy and learning dynamics.  Unlike these works, we present a spectral analysis of the Gram matrices (also known as stiffness matrices in FEM terminology) resulting from PINN and DRM using SNNs, and study their numerical performance in solving second-order elliptic equations.

We note that this paper is not a trivial generalization of the aforementioned works.  Ill-conditioning can arise from both the NN representation and the differential operator, which is more complicated than function approximation. Interestingly, the frequency bias from a neural network's inherent ill-conditioning can counteract the frequency bias introduced by the differential operator. Yu et al.~\cite{yu2023tuning} showed that replacing the $L^2$ norm in the loss function for function approximation with an appropriate Sobolev norm can amplify or reverse the neural network’s inherent frequency bias during training. The differential operator in the loss function for solving PDEs plays a similar role. This competition between the high-frequency bias of differential operators and low-frequency bias from network representations  leads to a few \textit{central questions}: what is the overall frequency bias, and how does it impact the accuracy, stability, convergence, and cost of solving PDEs with NNs? We investigate these properties in the context of second-order elliptic PDEs solved with PINNs and DRM using SNNs. 

\section{$\ReLU^p$ Neural Networks for Elliptic PDEs}\label{sec_NN_poisson}
In this paper, we focus on the study of solving the following type of second-order elliptic equations with Dirichlet boundary conditions using two-layer NNs: 
\begin{equation}\label{eq_PDE_problem}
	\begin{cases}
		Lu=f&\text{in}~\Omega\\
		u = g&\text{on}~\partial\Omega
	\end{cases}\;,
\end{equation}
where $\Omega\subset\mathbb{R}^d$ is a bounded domain with sufficiently smooth boundary. In~\eqref{eq_PDE_problem}, the function $u$ is the unknown, $f\in L^{\infty}(\Omega),g \in L^{\infty}(\partial\Omega)$ are  given, and  the differential operator can be written in a divergence form
\begin{equation}\label{eq_div_operator}
	Lu := -\sum_{i,j=1}^d\partial_j\left(a^{ij}(x)\partial_iu\right)  + c(x)u\;.
\end{equation}
We assume that the  coefficients are bounded, $a^{ij}=a^{ji}$ are symmetric, $c\geq 0$, and $L$ is uniformly elliptic, i.e., there exists a constant $\theta>0$ such that
\begin{equation}
	\sum_{i,j=1}^da^{ij}(x)\zeta_i\zeta_j\geq \theta|\bzeta|^2
\end{equation}
for almost every $x\in \Omega$ and all $\bzeta = (\zeta_1,\dots,\zeta_d)\in\mathbb{R}^d$. With continuous $f$ and $g$, a function $u\in C^2(\Omega)\cap C(\overline{\Omega})$ that satisfies~\eqref{eq_PDE_problem} is called its \textit{classical solution}. 

One can also define a weak solution using Sobolev spaces. Denote $T:H^1(\Omega)\to L^2(\partial\Omega)$ as the trace operator (See e.g.~\cite{evans2022partial} Section 5.5), then $u\in H_g^1(\Omega)=\{h \in H^1(\Omega)~:~Th = g\}$  is called a \textit{weak solution} for~\eqref{eq_PDE_problem} if it satisfies 
\begin{equation}\label{eq_PDE_weak}
	B[u,v] = \langle f, v\rangle\;, ~\text{for any}~v\in H_0^1(\Omega)\;.
\end{equation}
Here the bilinear mapping 
\begin{equation}\label{eq:var}
	B[u,v] :=\sum_{i,j=1}^d\int_{\Omega} a^{ij}(x) \partial_i u\partial_j v\,dx + \int_{\Omega} c(x) uv\,dx\;,
\end{equation}
and $\langle \cdot,\cdot\rangle$ is the pairing of $H^{-1}(\Omega)$ and $H_0^1(\Omega)$. With calculus of variations,  it can be shown that~\eqref{eq_PDE_weak} is equivalent to minimizing
\begin{equation}
	J(u) = \frac{1}{2}B[u,u] -\int_{\Omega} fu\,dx
\end{equation}
with the constraint that $u\in H_g^1(\Omega)$.
We refer the interested readers to~\cite{nirenberg1974topics,gilbarg1977elliptic,evans2022partial} for more details. 

\subsection{PINN and DRM with two-layer networks}
For an integer $N\geq 1$, we consider the $N$-width shallow neural network (SNN) of the following form: 
\begin{equation}\label{eq_two_layer_NN}
u(\bx,\ba,\bw,\bb) = \sum_{i=1}^N a_i\sigma(\bw_i^\top \bx -b_i)
\end{equation}
for approximating the PDE solution. Here $\ba = (a_1,\dots,a_N)\in \mathbb{R}^N$, $\bw_i\in\mathbb{R}^d$, $i=1,\dots,N$, and $\bb = (b_1,\dots,b_N)\in \mathbb{R}^N$  are the model parameters,  and $\sigma:\mathbb{R}\to\mathbb{R}$ denotes the activation function acting component-wise.   
For PINNs, the primary objective is to minimize the inconsistency of the NN approximation with the strong form~\eqref{eq_PDE_problem}. This inconsistency is measured by the loss term:
\begin{equation}\label{eq_PINN_in_loss}
\cL^{\text{in}}_{\PINN}(\ba,\bw,\bb) := \int_{\Omega}(Lu(\bx, \ba)-f(\bx))^2\,d\bx\;.
\end{equation}
In contrast, the DRM aims to address the variational form~\eqref{eq_PDE_weak}. The corresponding loss functional is derived directly from this form and is given by:
\begin{equation}\label{eq_DRM_in_loss}
\cL^{\text{in}}_{\DRM}(\ba,\bw,\bb) := \frac{1}{2}B[u,u] - \int_{\Omega}f(\bx)u(\bx)\,d\bx\;.
\end{equation}

While minimizing the error~\eqref{eq_PINN_in_loss} or the energy~\eqref{eq_DRM_in_loss} for the domain inconsistency, we consider two strategies of incorporating the Dirichlet boundary conditions. Denoting $F\in \{\PINN, \DRM\}$, the formulation with boundary constraints is written as:
\begin{equation}\label{eq_constrained_optimization}
\begin{aligned}
\min_{\ba,\bw,\bb}~&\cL_{F}^{\text{in}}(\ba,\bw,\bb)\\
&
u(\bx,\ba,\bw,\bb) = g(\bb)\;,~\text{for all}~\bx\in\partial\Omega
\end{aligned}\;,
\end{equation} 
and the formulation with boundary regularization is:
\begin{equation}\label{eq_regularized_optimization}
\min_{\ba,\bw,\bb}~\cL_{F}^{\text{in}}(\ba,\bw,\bb) + \lambda \int_{\partial\Omega} (u(\bx,\ba,\bw,\bb)-g(\bx))^2\,d\bx\;,
\end{equation} 
with some nonnegative regularization parameter $\lambda>0$. The unconstrained optimization form~\eqref{eq_regularized_optimization} is more common, but selecting an appropriate $\lambda$ can be challenging~\cite{wang2023expert}. 

Note that the network representation~\eqref{eq_two_layer_NN} is linear in the parameters $\ba$ but nonlinear in $\bw$ and $\bb$. The parameters $\bw$ and $\bb$ contribute to the network's adaptivity. To understand the training dynamics of a gradient-based approach, it is important to understand the representation with fixed $\bw$ and $\bb$.  This analysis reveals the best-case capacity of the current representation for capturing the underlying function at a given optimization step.  Furthermore, it clarifies whether the representation can provide the necessary information to effectively optimize $\bw$ and $\bb$ to achieve the desired adaptivity. For this purpose, we  study the solution of the one-dimensional Poisson equation (for simplicity) using the representation from~\eqref{eq_two_layer_NN} with \textit{fixed} $\bw$ and $\bb$, that is, the first layer is frozen. This allows us to omit the notations $\bw$ and $\bb$ from~\eqref{eq_two_layer_NN}-\eqref{eq_DRM_in_loss}, and we shall simply set $\Omega=(-1,1)$ without loss of generality. As results, for fixed $\bw,\bb$ and $\lambda$,~\eqref{eq_constrained_optimization} and~\eqref{eq_regularized_optimization} reduce to
\begin{equation}\label{eq_constrained_optimization_simple}
\begin{aligned}
\min_{\ba\in\mathbb{R}^N}~&\cL_{F}^{\text{in}}(\ba)\\
&
\begin{cases}
u(-1,\ba) = g(-1)\\
u(1,\ba) = g(1)
\end{cases}
\end{aligned}\;,
\end{equation} 
and 
\begin{equation}\label{eq_regularized_optimization_simple}
\min_{\ba\in\mathbb{R}^N}~\cL_{F}^{\text{in}}(\ba) + 
\lambda\left((u(-1,\ba)-g(-1))^2+(u(1,\ba)-g(1))^2\right)\;,
\end{equation} 
respectively; here $\cL_{\PINN}^{\text{in}}(\ba):=\frac{1}{2}\int_{-1}^1(\partial_x^2u(x, \ba)+f(x))^2\,dx$ and 
$\cL_{\DRM}^{\text{in}}(\ba):=\frac{1}{2}\int_{-1}^1|\partial_x u(x,\ba)|^2\,dx-\int_{-1}^1f(x)u(x,\ba)\,dx$ are also simplified. Thereby, we have a linear representation: the network function $u(\cdot,\ba):\mathbb{R}\to\mathbb{R}$ is a linear combination of a family of basis functions $\sigma(w_nx-b_n)$ defined by the given parameters $w_n, b_n\in\mathbb{R}$ for $n=1,\dots,N$. In particular,~\eqref{eq_constrained_optimization_simple} and~\eqref{eq_regularized_optimization_simple} become 
quadratic optimization problems, which are equivalent to solving linear systems involving the Gram matrix (or the stiffness matrix in finite element terminology). 
 
For any nonegative integer $k$, define $\bG^{(k)}_{\sigma}\in\mathbb{R}^{N\times N}$ to be the Gram matrix associated with the $k$-th derivative of $\sigma$ whose entries are given by
\begin{equation}\label{eq_Gram_matrix}
[\bG^{(k)}_{\sigma}]_{i,j} = \int_{-1}^1w_i^kw_j^k\sigma^{(k)}( w_ix - b_i)\sigma^{(k)}( w_jx - b_j)\,dx\;,\quad  i,j=1,\dots,N\;,
\end{equation}
and define a vector $\by_{\sigma}^{(k)}$ whose entries are
\begin{equation}\label{eq_Gram_vector}
[\by^{(k)}_\sigma]_i = \int_{-1}^1 w^k_i\sigma^{(k)}(w_ix-b_i) f(x)\,dx\;, i=1,\dots,N\;.
\end{equation}
With the above setup, we have the following observation.

\begin{proposition} Using two-layer NNs~\eqref{eq_two_layer_NN} with fixed $\bw$ and $\bb$, both PINN and DRM with boundary constraints~\eqref{eq_constrained_optimization_simple} are equivalent to solving a quadratic minimization problem with linear constraints of the form
\begin{equation}\label{eq_constrained_quad}
\begin{aligned}
\min_{\ba\in\mathbb{R}^N}&~\frac{1}{2}\ba^\top\bG_F\ba - \ba^\top\by_F\;,~F\in\{\PINN, \DRM\}\\
\text{s.t.}&~\bB\ba = \bc
\end{aligned}\;.
\end{equation}
And both PINN and DRM with boundary regularization~\eqref{eq_regularized_optimization_simple} are equivalent to solving
\begin{equation}\label{eq_regularization_quad}
\min_{\ba\in\mathbb{R}^N}~\frac{1}{2}\ba^\top\bG_F\ba -\ba^\top\by_{F}+ \frac{\lambda}{2}\|\bB\ba-\bc\|^2\;,~F\in\{\PINN, \DRM\}\;.
\end{equation}
 In both~\eqref{eq_constrained_quad} and~\eqref{eq_regularization_quad}, $\bc^\top=(g(-1),g(1))\in\mathbb{R}^2$,
\begin{equation}\label{eq_B_matrix}
\bB = \begin{bmatrix}
\sigma(-w_1-b_1)&\sigma(-w_2-b_2)&\cdots&\sigma(-w_N-b_N)\\
\sigma(w_1-b_1)&\sigma(w_2-b_2)&\cdots&\sigma(w_N-b_N)\\
\end{bmatrix}\in\mathbb{R}^{2\times N}\;,
\end{equation}
and we have $\bG_{\PINN} = \bG_\sigma^{(2)}$ and $\by_{\PINN} = -\by_{\sigma}^{(2)}$; $\bG_{\DRM}=\bG_{\sigma}^{(1)}$ and $\by_{\DRM}=\by^{(0)}_{\sigma}$.
\end{proposition}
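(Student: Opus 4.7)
The plan is a direct substitution: expand each loss in the parameter vector $\ba$ and read off the quadratic form. Since $\bw$ and $\bb$ are frozen, $u(x,\ba)=\sum_{i=1}^N a_i\sigma(w_ix-b_i)$ is linear in $\ba$, and so is any derivative $\partial_x^k u(x,\ba)=\sum_{i=1}^N a_i w_i^k\sigma^{(k)}(w_ix-b_i)$.

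First I would handle the PINN interior loss. Substituting $\partial_x^2 u$ into $\cL_{\PINN}^{\text{in}}(\ba)=\tfrac{1}{2}\int_{-1}^1(\partial_x^2 u + f)^2\,dx$ and expanding the square gives three terms: the pure quadratic $\tfrac{1}{2}\int(\partial_x^2 u)^2\,dx$, the cross term $\int (\partial_x^2 u) f\,dx$, and the constant $\tfrac{1}{2}\int f^2\,dx$. Fubini on the double sum in the quadratic term produces $\tfrac{1}{2}\ba^\top\bG_\sigma^{(2)}\ba$ by the definition~\eqref{eq_Gram_matrix} with $k=2$, while the cross term becomes $\ba^\top\by_\sigma^{(2)}$ by~\eqref{eq_Gram_vector}. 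Dropping the $\ba$-independent constant and writing $-\by_\sigma^{(2)}=\by_{\PINN}$, one lands on $\tfrac{1}{2}\ba^\top\bG_{\PINN}\ba-\ba^\top\by_{\PINN}$ with $\bG_{\PINN}=\bG_\sigma^{(2)}$.

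For the DRM interior loss $\cL_{\DRM}^{\text{in}}(\ba)=\tfrac{1}{2}\int_{-1}^1|\partial_x u|^2\,dx-\int_{-1}^1 f u\,dx$ the same substitution gives $\tfrac{1}{2}\ba^\top\bG_\sigma^{(1)}\ba$ for the Dirichlet energy (using $k=1$ in~\eqref{eq_Gram_matrix}) and $\ba^\top\by_\sigma^{(0)}$ for the linear term (using $k=0$ in~\eqref{eq_Gram_vector}), yielding $\bG_{\DRM}=\bG_\sigma^{(1)}$ and $\by_{\DRM}=\by_\sigma^{(0)}$.

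Finally, for the boundary conditions, I would note that $u(\pm 1,\ba)=\sum_i a_i\sigma(\pm w_i-b_i)$ is exactly $(\bB\ba)_{1}$ at $x=-1$ and $(\bB\ba)_{2}$ at $x=+1$, with $\bB$ as in~\eqref{eq_B_matrix}. Hence the hard constraints in~\eqref{eq_constrained_optimization_simple} become the linear system $\bB\ba=\bc$ with $\bc=(g(-1),g(1))^\top$, giving~\eqref{eq_constrained_quad}, and the quadratic penalty in~\eqref{eq_regularized_optimization_simple} equals $\lambda\|\bB\ba-\bc\|^2$ (absorbing the inessential constant factor into $\lambda$), yielding~\eqref{eq_regularization_quad}. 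There is no real obstacle here; the statement is a bookkeeping identification, and the only point requiring mild care is keeping track of the $w_i^k$ factors that come from differentiating the pre-activation $w_ix-b_i$, which is precisely why those factors appear inside the definitions~\eqref{eq_Gram_matrix} and~\eqref{eq_Gram_vector}.
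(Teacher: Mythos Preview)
Your proposal is correct and complete. The paper itself does not supply a proof for this proposition---it is presented as an ``observation'' following the definitions of $\bG_\sigma^{(k)}$ and $\by_\sigma^{(k)}$---and your direct-substitution argument is precisely the routine verification the authors leave implicit, including the minor bookkeeping of the sign on $\by_{\PINN}$ and the factor-of-two absorbed into $\lambda$.
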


We note that the quadratic minimization form~\eqref{eq_regularization_quad} for PINN was considered in~\cite{luo2024two}. The constrained optimization~\eqref{eq_constrained_optimization_simple} is rarely used in practice, but it is a useful reference where no extra hyper-parameter is needed.  Moreover, we always assume that $\bG_F$ is nonsingular, or equivalently, the functions $\{w_i^k\sigma^{(k)}(w_ix-b_i)\}_{i=1}^N$ are linearly independent for $k=1$ and $k=2$. In this case,  both problems~\eqref{eq_constrained_quad} and~\eqref{eq_regularization_quad} admit unique solutions.

\begin{figure}
\centering
\begin{tabular}{ccc}
(a)&(b)&(c)\\
\includegraphics[width=0.28\textwidth]{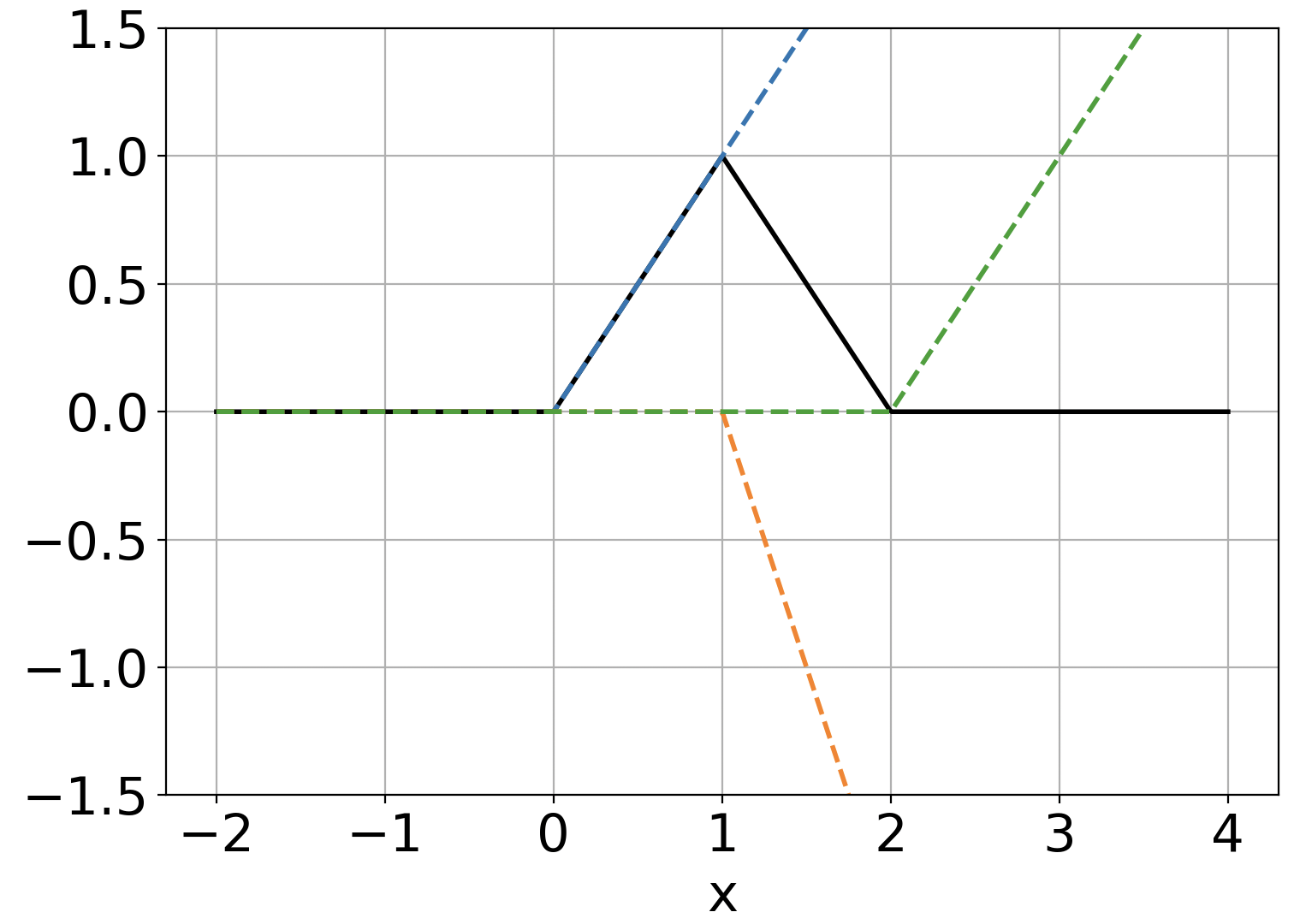}&
\includegraphics[width=0.28\textwidth]{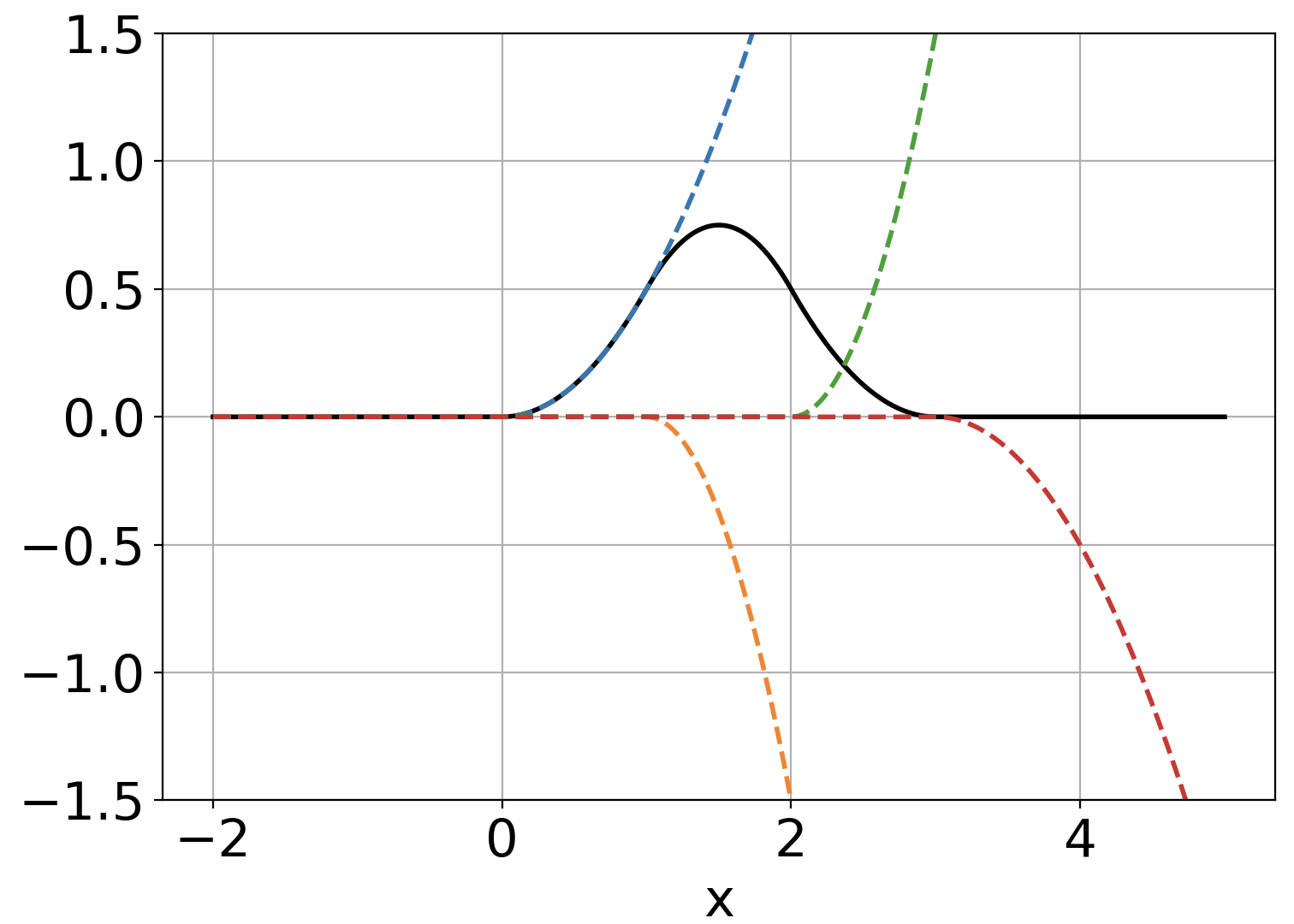}&
\includegraphics[width=0.28\textwidth]{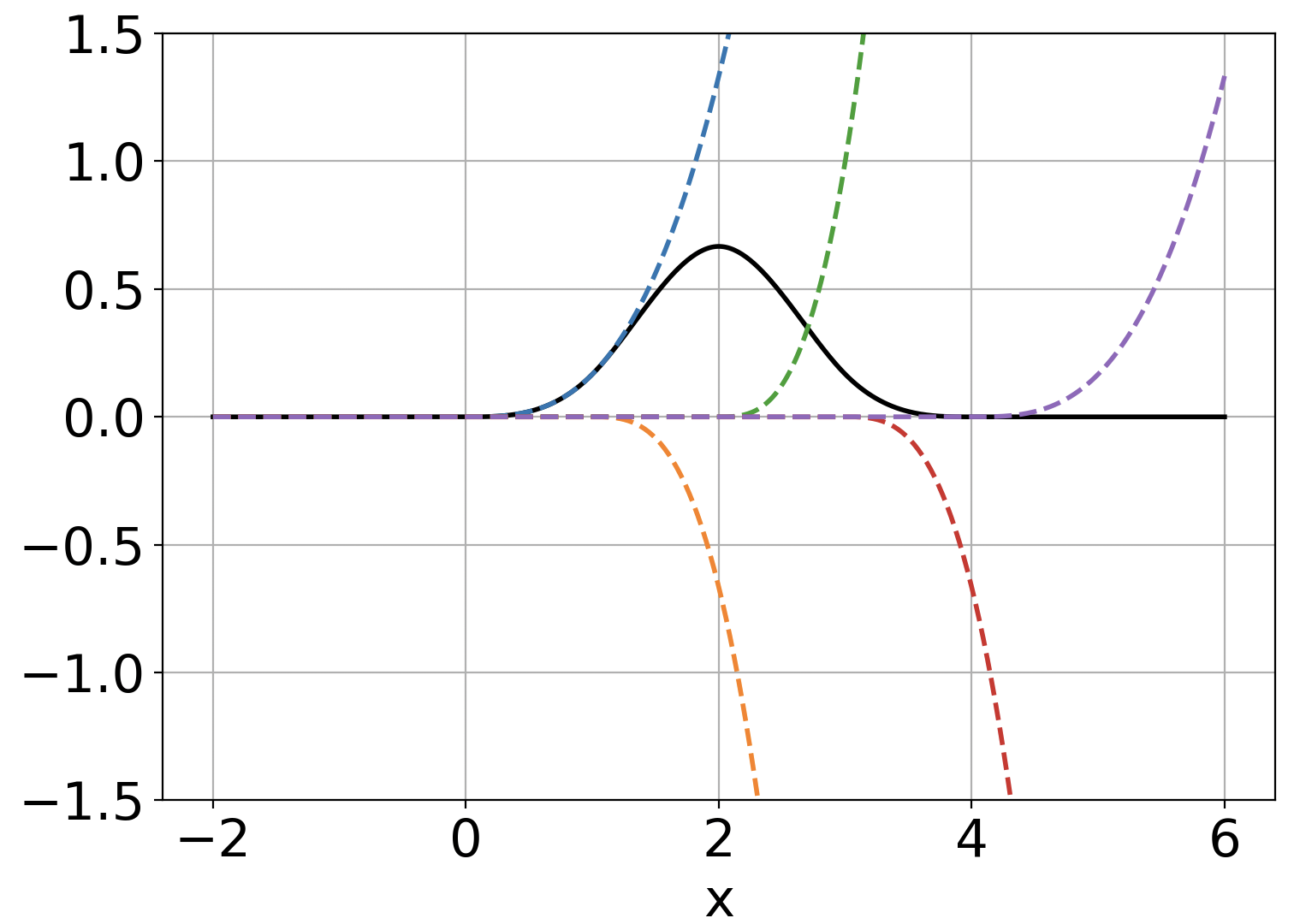}
\end{tabular}
\caption{Composition of B-spline finite element basis of order $p+1$ using linear combination of ReLU power-$p$ bases. (a) $p=1$, (b) $p=2$, (c) $p=3$. }\label{fig_spline_using_relu}
\end{figure}

\subsection{Shallow NNs with $\ReLU^p$ activation}
We first study the case where  $\sigma$ is the $\ReLU^p$ activation function defined by $z\mapsto (\max\{0,z\})^p$ for some integer $p\geq 1$. To satisfy the differentiability requirements in the respective formulations, we assume $p\geq 2$ for PINN and  $p\geq 1$ for DRM. Such architectures have been considered in~\cite{mcculloch1943logical,hornik1989multilayer,he2020relu, gao2023gradient,li2017convergence,zhang2025shallow,williams2019gradient}. When the components of $\bb$ form a regular grid on $(-1,1)$, we note that $\ReLU^p$  functions are closely related to the cardinal B-splines of degree $p$.  A degree $p$ cardinal B-spline supported on $[0,p+1]$ can be derived recursively~\cite{de1972calculating}  via
\begin{equation}
B^{p}(x) = \frac{x}{p} B^{p-1}(x) + \frac{p+1-x}{p} B^{p-1}(x-1)\;,
\end{equation}
and $B^0(x) = 1$ when $x\in [0,1]$  and $0$ otherwise. More explicitly, we have
\begin{equation}\label{eq_Bspline_ReLU}
B^{p}(x) = \frac{1}{p!}\sum_{k=0}^{p+1}(-1)^k {p+1\choose k} \ReLU^p(x-k)\;,\quad\text{for any }x\in\mathbb{R}\;.
\end{equation}
See Figure~\ref{fig_spline_using_relu} for illustrations. The linear combination of $\ReLU^p$ in~\eqref{eq_Bspline_ReLU} is clearly $0$ for $x\leq 0$. Since the coefficients correspond to $(p+1)$-th finite difference, we have  
\begin{equation}
\sum_{k=0}^{p+1}(-1)^k {p+1\choose k}(x-k)^p = 0\;,
\end{equation}
for any $x$, which ensures that~\eqref{eq_Bspline_ReLU} is $0$  whenever $x\geq p+1$. Note that with $\Delta x = 2/(N-p)$, the set
\begin{equation}
\cS_N^p=\left\{ B^p\left(\frac{x-1+\ell\Delta x}{\Delta x}\right)\;, \ell = -p, -p+1,\dots, N-p-1\right\}
\end{equation}
forms a basis for the spline space of degree $p$, i.e., piecewise polynomial functions with degree $p$ over $(-1,1)$, and it is a basic component in B-spline FEM~\cite{hollig2003finite}. Combined with~\eqref{eq_Bspline_ReLU}, we have the following observation.
\begin{proposition}\label{prop_refined}
Define $\bb = (b_1,\dots,b_N)\in\mathbb{R}^N$ with $b_i = -1+(i-p)\Delta x$ and $\Delta x = 2/(N-p)$, then
\begin{equation}
\operatorname{span}_{\mathbb{R}}\, \cS_N^p = \operatorname{span}_{\mathbb{R}}\,\{\ReLU^p(x-b_i), i=1,\dots,N\}\;.
\end{equation}
\end{proposition}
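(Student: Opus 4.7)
The plan is to combine the explicit spline-to-$\ReLU^p$ expansion~\eqref{eq_Bspline_ReLU} with a dimension count. Since both spanning sets have cardinality $N$, it suffices to establish one inclusion and verify that both spans have dimension $N$.

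For the inclusion $\operatorname{span}_{\mathbb{R}} \cS_N^p \subseteq \operatorname{span}_{\mathbb{R}}\{\ReLU^p(x-b_i)\}_{i=1}^N$, I would pick an arbitrary basis element $B^p((x-1+\ell\Delta x)/\Delta x)\in\cS_N^p$ and substitute $y = (x-1+\ell\Delta x)/\Delta x$ into~\eqref{eq_Bspline_ReLU}. Invoking the positive homogeneity $\ReLU^p(z/\Delta x) = (\Delta x)^{-p}\ReLU^p(z)$ yields
\[
B^p\!\left(\frac{x-1+\ell\Delta x}{\Delta x}\right) \;=\; \frac{1}{p!\,(\Delta x)^p}\sum_{k=0}^{p+1}(-1)^k\binom{p+1}{k}\ReLU^p(x-\beta_{\ell,k}),
\]
where $\beta_{\ell,k} := 1 - (\ell-k)\Delta x$. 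Using $1 = -1 + (N-p)\Delta x$, each shift rewrites as $\beta_{\ell,k} = -1 + (N-p+k-\ell)\Delta x$, matching $b_i$ for the integer $i = N+k-\ell$. I would then check that as $(\ell,k)$ ranges over $\{-p,\ldots,N-p-1\}\times\{0,\ldots,p+1\}$, the index $i$ either lies in $\{1,\ldots,N\}$ (giving a prescribed basis element) or places $\beta_{\ell,k}$ outside $(-1,1)$ so that $\ReLU^p(x-\beta_{\ell,k})$ vanishes on the interval and can be discarded.

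For the reverse inclusion, I would argue by dimensions. The set $\cS_N^p$ is a classical B-spline basis for the degree-$p$ spline space on $(-1,1)$ associated with the knots $\{-1 + j\Delta x\}_{j=0}^{N-p}$, whose dimension equals $N$. To show the right-hand side has dimension at least $N$, I would verify linear independence of $\{\ReLU^p(x-b_i)\}_{i=1}^N$ by applying the $(p+1)$-st distributional derivative to a hypothetical vanishing combination; this produces a weighted sum of Dirac masses at the distinct biases, forcing each coefficient to vanish, after which a Vandermonde-type argument on the monomial expansions of $(x-b_i)^p$ disposes of any polynomial-only components coming from biases at or outside the boundary. Combined with the first inclusion, this upgrades containment to equality.

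The main technical obstacle is the combinatorial bookkeeping in the first step: the expansion produces $p+2$ $\ReLU^p$ terms per B-spline while only $N$ biases are prescribed, so shifts at or beyond the right endpoint of $(-1,1)$ must be systematically identified and discarded. The specific choices $\Delta x = 2/(N-p)$ and $b_i = -1 + (i-p)\Delta x$ are precisely what make this matching internally consistent.
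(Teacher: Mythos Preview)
Your approach—using~\eqref{eq_Bspline_ReLU} for the forward inclusion and a dimension count for the reverse—is exactly what the paper intends; the paper offers no argument beyond invoking~\eqref{eq_Bspline_ReLU}.

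There is, however, a genuine tension between the two halves of your plan that the bookkeeping does not resolve. In the forward inclusion you discard $\ReLU^p$ terms whose shift $\beta_{\ell,k}$ lands at or beyond $1$, on the grounds that such functions vanish on $(-1,1)$; this commits you to regarding both spans as spaces of functions on $(-1,1)$. But for the dimension count you then need all $N$ functions $\ReLU^p(x-b_i)$ to be linearly independent on that same domain, and here the prescribed biases bite back: $b_N=-1+(N-p)\Delta x=1$, so $\ReLU^p(x-b_N)\equiv 0$ on $(-1,1)$. Your distributional-derivative step yields no Dirac mass inside the interval for this term, and the Vandermonde cleanup does not apply because the function is identically zero rather than a nontrivial polynomial. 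Thus the right-hand span has dimension at most $N-1$ on $(-1,1)$, and the equality-by-dimension argument cannot close on the domain you already fixed in the first step.

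In fairness, the paper's own indexing of $\cS_N^p$ seems to carry a sign or shift typo: as written, the B-splines with $\ell\le 0$ have support beginning at $x\ge 1$ and are also identically zero on $(-1,1)$, so the stated equality is problematic before any proof starts. With a corrected shift (so that no prescribed bias lands on the closed boundary and every listed B-spline touches the open interval) your two-step plan goes through. The moral is that the step you flag as ``combinatorial bookkeeping'' is not merely tedious here—it is exactly where the argument, and the statement, need repair.
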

This result shows that the space of shallow $\ReLU^p$ NNs with weights $\bw=(1,\dots,1)\in\mathbb{R}^N$ and regular biases is exactly the space of splines of degree $p$.   In~\cite{xu2020finite} Lemma 3.2, Xu showed a general inclusion relation between the spline space of degree $p$ and the vector space spanned by power-ReLU with arbitrary weights and biases. Proposition~\ref{prop_refined} gives an explicit correspondence. This naturally leads us to define
\begin{equation}\label{eq_relate_G_sigma_G_FEM}
\bG_{\FEMsp}^{(k)} = \bW^\top\bG^{(k)}_{\sigma}\bW\;,\quad\by^{(k)}_{\FEMsp} = \bW^\top\by^{(k)}_{\sigma}
\end{equation}
for $\sigma = \ReLU^p$ and $k=0,1,\dots$, where $\bW\in\mathbb{R}^{N\times N}$  is a transformation matrix whose $(i,j)$-th entry is given by
\begin{equation}\label{eq_def_W}
[\bW]_{i,j} =  \begin{cases}
\frac{(-1)^{i-j}}{p!(\Delta x)^p}{p+1\choose i-j}& j\leq i \leq j+p+1 \\
0&\text{otherwise}
\end{cases}\;,
\end{equation}
and we can consider both~\eqref{eq_constrained_optimization} and~\eqref{eq_regularized_optimization} with $F=\FEMsp$.

\subsection{Eigenvalue structure of the Gram matrix}

Although Proposition~\ref{prop_refined} claims an equivalence relation between $\ReLU^p$ activation functions and cardinal B-splines of degree $p$, they induce different spectral properties of the associated Gram matrices. Note that $\bW$ defined in~\eqref{eq_def_W} is not an orthogonal matrix, thus $\bG^{(k)}_{\FEMsp}$ is merely congruent to $\bG_\sigma^{(k)}$.

To characterize the \textbf{asymptotic spectrum} of $\bG_{F}^{(k)}$ for $F\in\{\PINN, \DRM\}$,  we define the kernel function $\cG_p: [-1,1]\times[-1,1]\to\mathbb{R}$
\begin{equation}
\cG_p(x,y):=\int_{-1}^1\ReLU^p( z-x)\cdot \ReLU^p( z-y)\,dz
\end{equation}
for $\sigma=\ReLU^p$ with integer $p\geq 1$, 
where the weight parameter in the first layer is omitted~\footnote{We have $\ReLU^p(wx-y) = w^p\ReLU^p(x-y/w)$ for any $x,y$ and $w\neq 0$.}, and the associated  compact operator: 
\begin{equation}\label{eq_kernel}
\cK_p[h](x) = \int_{-1}^1\cG_p(x,y) h(y)\,dy
\end{equation}
acting on any $h\in L^2([-1,1])$.  Denote $\mu_{k,p}>0$ as the $k$-th eigenvalue of $\cG_p$ in the descending order and $\varphi_{k,p}$ the corresponding eigenfunction such that $\cK_p[\varphi_{k,p}] = \mu_{k,p}\varphi_{k,p}$. We start with the following observation. 

\begin{lemma}\label{lemma_vanishing_derivative} For $p\geq 1$, we have
\begin{equation}
\frac{\partial^{2p+2}}{\partial x^{2p+2}}\cG_p(x,y) = (-1)^{p-1}(p!)^2\delta( x- y)\;,
\end{equation}
where $\delta$ is the Dirac distribution.
\end{lemma}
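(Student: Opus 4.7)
The plan is to exploit the fact that iterated differentiation of $\ReLU^p$ produces progressively more singular distributions, culminating in a Dirac mass after $p+1$ derivatives. Concretely, for $1 \le k \le p$ one has the classical identity $\frac{d^k}{dt^k}\ReLU^p(t) = \frac{p!}{(p-k)!}\ReLU^{p-k}(t)$, and distributionally one more derivative yields $\frac{d^{p+1}}{dt^{p+1}}\ReLU^p(t) = p!\,\delta(t)$ (the $k=p$ step giving the Heaviside function $p!\,H(t)$, whose derivative is $p!\,\delta$).

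First I would differentiate $\cG_p(x,y)$ in $x$ exactly $p+1$ times, pulling the derivatives inside the integral. For $k \le p$ this is simply differentiation under the integral sign of a continuous integrand; for the final derivative the interchange is read as a pairing against the smooth-in-$z$ test factor $\ReLU^p(z-y)$. Using $\partial_x^{p+1}\ReLU^p(z-x) = (-1)^{p+1}p!\,\delta(z-x)$, the integral collapses to
\begin{equation*}
\partial_x^{p+1}\cG_p(x,y) \;=\; (-1)^{p+1}\,p!\,\ReLU^p(x-y), \qquad x\in(-1,1).
\end{equation*}

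Next I would differentiate this expression another $p+1$ times in $x$. Since the sign of $x$ inside $\ReLU^p(x-y)$ is now positive, $\partial_x^{p+1}\ReLU^p(x-y) = p!\,\delta(x-y)$, and multiplying yields
\begin{equation*}
\partial_x^{2p+2}\cG_p(x,y) \;=\; (-1)^{p+1}(p!)^2\,\delta(x-y),
\end{equation*}
which, upon noting $(-1)^{p+1}=(-1)^{p-1}$, is the stated identity.

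The main obstacle is justifying the interchange of differentiation and integration at the step where the order of differentiation exceeds the classical regularity of $\ReLU^p(z-x)$ in $x$ (which is only $C^{p-1}$). I would handle this by either mollifying $\ReLU^p$ and passing to the limit, or more cleanly by viewing the identity in $\cD'((-1,1)_x)$ for each fixed $y\in(-1,1)$: the first $p$ derivatives are classical, and the $(p+1)$-st is a bona fide distributional equality once tested against a smooth $\varphi(x)$, with Fubini legitimizing the reduction to a one-variable fact about $\ReLU^p$. A further minor point is that the collapse $\int_{-1}^1\delta(z-x)\ReLU^p(z-y)\,dz = \ReLU^p(x-y)$ needs $x\in(-1,1)$; the statement is understood on the interior, which is consistent with the use of $\cG_p$ inside $\cK_p$.
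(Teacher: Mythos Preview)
Your proof is correct and follows essentially the same route as the paper's: differentiate $p+1$ times in $x$ to collapse the integral to $(-1)^{p-1}p!\,\ReLU^p(x-y)$, then differentiate $p+1$ more times to obtain the Dirac mass. The paper merely splits the first $p+1$ derivatives into $(p-1)+2$ steps, pausing at the intermediate expression $(-1)^{p-1}p!\int_{-1}^1\ReLU(z-x)\ReLU^p(z-y)\,dz$, whereas you do them in one stroke via the distributional identity $\partial_x^{p+1}\ReLU^p(z-x)=(-1)^{p+1}p!\,\delta(z-x)$; your added discussion of the distributional justification is more careful than the paper's.
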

\begin{proof}
See Appendix~\ref{proof_lemma_vanishing_derivative}.
\end{proof}
The above lemma implies that the eigenfunction $\varphi_k$ of $\cK_p$ satisfies a linear differential equation 
\begin{equation}\label{eq_eigen_ODE}
\mu_{k,p}\varphi_{k,p}^{(2p+2)}(x) = (-1)^{p-1}(p!)^2\varphi_{k,p}(x)\;.
\end{equation}
From this, we deduce the following characterization of the eigenvalue structure of the  compact operator~\eqref{eq_kernel}.

\begin{proposition}\label{prop_decaying} Let $\mu_{1,p}>\mu_{2,p}>\cdots$ be the eigenvalues of $\cK_p$ and $\varphi_{k,p}$ the corresponding eigenfunctions.  For any $p\geq 1$,  as $k\to+\infty$, we have
\begin{equation}
\mu_{k,p} = \cO(k^{-(2p+2)})\;.
\end{equation}

\end{proposition}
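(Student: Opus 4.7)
The plan is to reformulate the eigenvalue equation $\cK_p[\varphi_{k,p}] = \mu_{k,p}\varphi_{k,p}$ as a self-adjoint boundary value problem for an elliptic differential operator of order $2p+2$ on $(-1,1)$, and then invoke Weyl's asymptotic formula. Differentiating the identity $\mu_{k,p}\varphi_{k,p}(x) = \int_{-1}^1 \cG_p(x,y)\varphi_{k,p}(y)\,dy$ and applying Lemma~\ref{lemma_vanishing_derivative} already yields the ODE~\eqref{eq_eigen_ODE}; the remaining task is to pin down the $2p+2$ boundary conditions that complete the spectral problem.

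First, I extract the boundary conditions by computing $\partial_x^j\cG_p(x,y)$ at $x=\pm 1$. For $0\le j\le p$, the derivative equals a constant times $\int_{-1}^1\ReLU^{p-j}(z-x)\ReLU^p(z-y)\,dz$, which vanishes at $x=1$ because $\ReLU^{p-j}(z-1)\equiv 0$ on $[-1,1]$; hence $\varphi_{k,p}^{(j)}(1)=0$ for $0\le j\le p$. For $j=p+1+\ell$ with $0\le \ell\le p$, one additional differentiation collapses the integral through a Dirac delta, giving $\partial_x^{p+1+\ell}\cG_p(x,y)=c_{p,\ell}\ReLU^{p-\ell}(x-y)$ on $(-1,1)$. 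Evaluating at $x=-1$ gives $\ReLU^{p-\ell}(-1-y)=0$ for $y\in(-1,1]$, hence $\varphi_{k,p}^{(p+1+\ell)}(-1)=0$ for $0\le\ell\le p$. Together, the eigenfunctions live in the domain
\begin{equation*}
\cD_p:=\bigl\{\varphi\in H^{2p+2}(-1,1):\varphi^{(j)}(1)=0,\,0\le j\le p;\;\varphi^{(j)}(-1)=0,\,p+1\le j\le 2p+1\bigr\}\,.
\end{equation*}

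Next, I would set $\cL_p:=(-1)^{p-1}(p!)^{-2}\partial^{2p+2}$ with domain $\cD_p$ and show it is self-adjoint, positive, and equal to $\cK_p^{-1}$ on its range. Self-adjointness follows by integrating by parts $2p+2$ times: the boundary contributions split into terms of the form $\varphi^{(2p+1-j)}(x_0)\psi^{(j)}(x_0)$, and at $x_0=1$ either $j\le p$ (so $\psi^{(j)}(1)=0$) or $j\ge p+1$ (so $2p+1-j\le p$ and $\varphi^{(2p+1-j)}(1)=0$), with a symmetric dichotomy at $x_0=-1$. Positivity is clear from the factorization $\cK_p=T_p^*T_p$ with $(T_ph)(z)=\int_{-1}^z(z-y)^p h(y)\,dy$. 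Thus $\cL_p$ is a self-adjoint positive elliptic differential operator of order $2p+2$ on a bounded interval.

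Finally, Weyl's asymptotic formula gives $\lambda_k(\cL_p)\sim C_p k^{2p+2}$ as $k\to\infty$, and therefore $\mu_{k,p}=1/\lambda_k(\cL_p)=\cO(k^{-(2p+2)})$. The main obstacle I anticipate is the careful bookkeeping for the asymmetric split of boundary conditions between the two endpoints, and matching conventions so that Weyl's law applies cleanly. A cleaner fallback that avoids the ODE altogether is to use the factorization $\cK_p=T_p^*T_p$ with $T_p=p!\,I^{p+1}$, where $I$ is one-sided integration from $-1$; combining the smoothing bound $T_p\colon L^2\to H^{p+1}$ with the classical singular-value estimate $\sigma_k(H^{p+1}(-1,1)\hookrightarrow L^2)=\cO(k^{-(p+1)})$ directly yields $\mu_{k,p}=\sigma_k(T_p)^2=\cO(k^{-(2p+2)})$.
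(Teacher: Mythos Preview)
Your argument is correct and closely parallels the paper's, with the main divergence in the final asymptotic step. Both you and the paper reduce the eigenvalue problem to the ODE~\eqref{eq_eigen_ODE} and derive the same asymmetric boundary conditions
\[
\varphi^{(j)}(1)=0\ (0\le j\le p),\qquad \varphi^{(j)}(-1)=0\ (p+1\le j\le 2p+1).
\]
From there, the paper writes out the characteristic roots explicitly and invokes Birkhoff's method and Stone's estimation for the eigenvalue distribution of a higher-order ODE with regular boundary conditions, obtaining $\rho_{k,p}=\cO(k)$. You instead package the problem as a self-adjoint elliptic operator $\cL_p=\cK_p^{-1}$ with compact resolvent and appeal to Weyl's law. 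These are two faces of the same classical fact; your route is slightly cleaner in that it avoids the explicit general solution and makes the positivity transparent via $\cK_p=T_p^*T_p$, while the paper's approach has the advantage of yielding an explicit description of the eigenfunctions (the expansion~\eqref{eq_general_eigen}), which the paper uses elsewhere to explain the frequency content of the eigenmodes.

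Your fallback argument is genuinely different and worth highlighting: the factorization $T_p=p!\,I^{p+1}$ together with the Kolmogorov width estimate $\sigma_k(H^{p+1}\hookrightarrow L^2)=\cO(k^{-(p+1)})$ gives $\mu_{k,p}=\sigma_k(T_p)^2=\cO(k^{-(2p+2)})$ without ever differentiating the kernel or identifying boundary conditions. This bypasses both the ODE and the Birkhoff/Weyl machinery entirely, and is arguably the most economical proof of the upper bound stated in the proposition.
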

\begin{proof}
The characteristic equation for~\eqref{eq_eigen_ODE} is  $r^{2p+2} = \mu_{k,p}^{-1}(p!)^2 e^{(p-1)\pi i}$, and the roots are given by
\begin{equation}
r_n=\left(\mu_{k,p}^{-1}(p!)^2\right)^{\frac{1}{2p+2}}e^{\frac{(2n
+p-1)\pi i}{2p+2}}\;,~n=0,1,\dots,2p+1\;.
\end{equation}
Denoting $\rho_{k,p}:=\left(\mu_{k,p}^{-1}(p!)^2\right)^{\frac{1}{2p+2}}$, the general solution of~\eqref{eq_eigen_ODE} takes the form
\begin{equation}\label{eq_general_eigen}
\sum_{n=0}^{p}e^{a_{n,k} x}(A_{n,k}\cos(\omega_{n,k} x) + B_{n,k}\sin(\omega_{n,k} x))\;,
\end{equation}
where $a_{n,k} =\rho_{k,p}\cos \frac{(2n+p-1)\pi}{2p+2}$,  $\omega_{n,k} = \rho_{k,p}\sin \frac{(2n+p-1)\pi}{2p+2}$ and the undetermined constants $A_{n,k},B_{n,k}$ satisfy $A_{n,k}^2+B_{n,k}^2\neq 0$, for  $n=0,1,\dots, p$. From the proof of Lemma~\ref{lemma_vanishing_derivative}, we see that 
\begin{equation}\label{eq_wronskian}
\begin{cases}\frac{d^\ell}{d x^{\ell}}\varphi_{k,p}(1) = 0\;, \ell=0,1,\dots,p\\
\frac{d^\ell}{d x^{\ell}}\varphi_{k,p}(-1) = 0\;, \ell=p+1,p+2,\dots,2p+1\;,
\end{cases}
\end{equation}
holds for any $k$, which  completely determines $A_{n,k}$ and $B_{n,k}$ for $n=0,1,\dots,p$ up to a constant scale.  By the Birckhoff's method and Stone's estimation (See~\cite{birkhoff1908boundary} and \cite{stone1926comparison}), we have $\rho_{k,p}=\cO(k)$ as $k\to\infty$, which implies the statement. 
\end{proof}
Since $\cK_p$ is a compact Hermitian operator, the above spectral analysis of the kernel operator~\eqref{eq_kernel} allows us to infer the asymptotic spectrum of the Gram matrix $\bG_{\sigma}^{(k)}$ defined in~\eqref{eq_Gram_matrix}. Minor modifications of Theorem 1 in~\cite{zhang2025shallow} yield the following result. 
\begin{theorem}\label{theorem_bound}
 Let 
$\lambda_1^{(k)} \geq \lambda_2^{(k)} \geq \cdots \geq \lambda_N^{(k)}\geq 0$ be the eigenvalues of the Gram matrix $\bG_{\sigma}^{(k)}\in\mathbb{R}^{N\times N}$ with $\sigma=\ReLU^p$ and $p > k$. For $n=1,2,\dots,N$,  assume that $w_n=\pm 1$ and $\bb=(b_1,\dots,b_N)\in\mathbb{R}^N$  is quasi-evenly spaced on $[-1,1]$, i.e., $b_n=-1+2(n-1)/N+o(N^{-1})$. Then as $N\to\infty$, we have
\begin{equation}
|\lambda^{(k)}_n- \frac{N}{2}\mu_{n,p-k}|\leq C
\end{equation}
 for some constant $C>0$, where $\mu_{n,p-k}$ is the $n$-th eigenvalue of $\cK_{p-k}$.
\end{theorem}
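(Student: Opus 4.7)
The plan is to identify $\bG_\sigma^{(k)}$, up to a scalar prefactor and a discretization weight $2/N$, as a Nystr\"om discretization of the compact kernel operator $\cK_{p-k}$, and then adapt the spectral-perturbation estimate of Theorem~1 in~\cite{zhang2025shallow} with the substitution $p\leftarrow p-k$.

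First I reduce the problem to a kernel matrix. Since $\sigma=\ReLU^p$ and $p>k$ give $\sigma^{(k)}(z)=\frac{p!}{(p-k)!}\ReLU^{p-k}(z)$, and $w_n\in\{\pm 1\}$ yields $w_n^{2k}=1$, a substitution $x\mapsto w_ix$ in the integrand of $[\bG_\sigma^{(k)}]_{ij}$ gives, when $w_i=w_j$, the identity
\[
[\bG_\sigma^{(k)}]_{ij}\;=\;\Bigl(\tfrac{p!}{(p-k)!}\Bigr)^{\!2}\,\cG_{p-k}(w_ib_i,\,w_jb_j).
\]
Cross terms between indices with $w_iw_j=-1$ are treated by the same substitution, combined with a reflection in the integration variable, and absorb into a spectrally small perturbation of the kernel matrix built from $\cG_{p-k}$ at the reflected quasi-uniform nodes $\tilde b_n:=w_nb_n\in[-1,1]$. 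Therefore, modulo a fixed constant, $\bG_\sigma^{(k)}$ differs from a genuine Nystr\"om matrix of the symmetric positive semidefinite kernel $\cG_{p-k}$ by a finite-rank plus $\cO(1)$-norm correction that is irrelevant for the asymptotic eigenvalue profile.

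Next, I interpret $\tfrac{2}{N}\bG_\sigma^{(k)}$, after absorbing the constant $\bigl(p!/(p-k)!\bigr)^2$, as a rectangle-rule quadrature approximation of $\cK_{p-k}$ with weight $2/N$ and nodes $\tilde b_n$. Because $\cG_{p-k}$ is continuous, symmetric, and piecewise polynomial of degree $2(p-k)+1$ on $[-1,1]^2$ with a controlled modulus of continuity, a Courant--Fischer min-max argument combined with the quadrature error on each fixed eigenspace yields
\[
\bigl|\lambda_n\!\bigl(\tfrac{2}{N}\bG_\sigma^{(k)}\bigr)-\mu_{n,p-k}\bigr|\;\le\;\tfrac{C'}{N},\qquad n=1,\dots,N,
\]
with $C'$ depending only on $p,k$. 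Multiplying by $N/2$ yields $|\lambda_n^{(k)}-(N/2)\mu_{n,p-k}|\le C$. This is the same structure used in~\cite[Thm.~1]{zhang2025shallow}; only the kernel (with lowered smoothness $p-k$) and the derivative prefactor change.

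The main obstacle is the uniform-in-$n$ quadrature estimate. By Proposition~\ref{prop_decaying} applied with $p\to p-k$, one has $\mu_{n,p-k}=\cO(n^{-2(p-k+1)})$, so for $n$ past a $p,k$-dependent threshold the target $(N/2)\mu_{n,p-k}$ itself falls below $C$ and the estimate then says only that both quantities are small---this is nonetheless the precision attainable from a single-step Nystr\"om discretization and is tight in general. A secondary technicality is the treatment of mixed signs $w_n=\pm 1$: one must verify that the off-diagonal $\pm$ cross blocks have bounded spectral norm and do not perturb the leading asymptotics, which reduces to a perturbation argument exploiting the near-disjointness of the supports of the two reflected families. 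These are precisely the ``minor modifications'' referenced in the statement; the remaining steps transfer verbatim from the cited proof.
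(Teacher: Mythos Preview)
Your reduction to the kernel $\cG_{p-k}$ and invocation of \cite[Thm.~1]{zhang2025shallow} match the paper's approach. However, two of your intermediate claims are incorrect, and your description of the spectral-perturbation mechanism differs from the one the paper (and the cited reference) actually uses.

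On the reduction: after the substitution $x\mapsto w_ix$ the same-sign entries become $(p!/(p-k)!)^2\,\cG_{p-k}(b_i,b_j)$, not $\cG_{p-k}(w_ib_i,w_jb_j)$; the $w$'s drop out of the bias arguments entirely. More seriously, the mixed-sign blocks are not handled by ``near-disjointness of supports'': the product $\ReLU^{p-k}(x-b_i)\,\ReLU^{p-k}(-x-b_j)$ is supported on $[b_i,-b_j]$, which is nonempty whenever $b_i+b_j\le 0$, i.e.\ for a positive fraction of index pairs. These entries are $O(1)$ and the cross block has $\Theta(N^2)$ nonzero entries, so it is not a priori a bounded-norm perturbation; a different argument is needed there.

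On the mechanism: the paper's argument (visible in the proof of the closely related Theorem~\ref{theorem_condition_number1}) does not use a Nystr\"om/Courant--Fischer quadrature bound on individual eigenspaces. It truncates the Mercer expansion of $\cG_{p-k}$ at level $M$ to write $\bG=\bA_M+\boldsymbol{R}_M$ with $\bA_M=\boldsymbol{\Phi}_M\boldsymbol{\Sigma}_M\boldsymbol{\Phi}_M^\top$, controls the eigenvalues of $\bA_M$ via an Ostrowski-type inequality combined with the bound $\|\boldsymbol{\Phi}_M\boldsymbol{\Phi}_M^\top-\tfrac{N}{2}\bI_M\|_2=O(M^2)$ (from quasi-uniformity of the $b_n$ and orthonormality of the $\varphi_{m,p-k}$), bounds $\|\boldsymbol{R}_M\|_2=O(NM^{-2(p-k)-1})$ from the eigenvalue decay, and applies Weyl's inequality. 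Optimizing over $M$ then gives the uniform $O(1)$ bound directly, circumventing what you flag as the main obstacle.
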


Combined with Proposition~\ref{prop_refined},  Theorem~\ref{theorem_bound} implies that as $N$ becomes sufficiently large, the $n$-th eigenvalue of the scaled Gram matrix $N^{-1}\bG_{\sigma}^{(k)}$ decays as fast as $\cO(n^{-(2(p-k)+2)})$. When the power $p$ grows, the activation function $\ReLU^p\in C^{p-1}(\mathbb{R})$ becomes smoother, and the spectrum decays faster. Consequently, we expect that:
\begin{enumerate}
\item When the power ReLU is used as the activation function in a two-layer neural network, the larger the power (i.e., the smoother the bases), the faster the spectral decay. This implies worse conditioning and stronger frequency bias against high frequencies.

\item The maximum number of frequency modes that a two-layer neural network can accurately and stably recover is determined by the spectral decay rate and machine precision (or noise level). For example, with a machine of finite precision $10^{-\kappa}$, at most $\mathcal{O}(10^{\kappa/(2(p-k)+2)})$ eigenvalues can be computed. For IEEE 754 double-precision floating-point arithmetic ($\kappa \approx 16$), this yields $\mathcal{O}(10^{8/(p-k+1)})$. In addition, as the regularity $p$ increases, the number of recoverable modes decreases. Note that, as the network's width increases, the added eigenmodes of the Gram matrix are of increasingly higher frequency and are associated with ever-smaller eigenvalues. Once the network's width exceeds a certain threshold, finite machine precision prevents the computation of new eigenmodes, even as the width grows arbitrarily large. This computational limit is typically disregarded in theoretical studies.    
\end{enumerate}

Another key factor that determines numerical stability is the \textbf{condition number} of the Gram matrix. A large condition number indicates that even minor errors from finite-precision arithmetic or numerical discretization can be significantly amplified, resulting in substantial deviations in the solution. Furthermore, it compromises the convergence of iterative optimization methods~\cite{caponnetto2007optimal,lin2020optimal}. The following result demonstrates the severity of this issue for a set of general quasi-evenly spaced biases.

\begin{theorem}\label{theorem_condition_number1}
 For $n=1,2,\dots,N$,  assume that $w_n=\pm 1$ and $\bb=(b_1,\dots,b_N)\in\mathbb{R}^N$  is quasi-evenly spaced on $[-1,1]$. With $\sigma=\ReLU^p$ any integer $0\leq k < p$, the condition number $\kappa(\bG_{\sigma}^{(k)}):=\lambda_{1}^{(k)}/\lambda^{(k)}_{N}$ satisfies
\begin{equation}
\kappa(\bG_{\sigma}^{(k)})=\Omega(N^{1+2(p-k)})\;,
\end{equation}
or equivalently, there exists some constant $C$ such that $\kappa(\bG_{\sigma}^{(k)})\geq CN^{1+2(p-k)}$.
\end{theorem}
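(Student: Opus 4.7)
The plan is to combine Theorem~\ref{theorem_bound}, which pins the top eigenvalue at $\lambda_1^{(k)}=\Omega(N)$, with an explicit B-spline test vector forcing $\lambda_N^{(k)}=\mathcal{O}(N^{-(2(p-k)+1)})$; the ratio then yields the stated lower bound (in fact a factor of $N$ stronger).

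Since $\sigma=\ReLU^p$ satisfies $\sigma^{(k)}=\tfrac{p!}{(p-k)!}\ReLU^{p-k}$ almost everywhere, and the hypothesis $w_n\in\{\pm 1\}$ makes $D:=\diag(w_i^k)$ a $\pm 1$ diagonal (hence orthogonal) matrix with $D^2=I$, I first observe that $\bG^{(k)}_{\sigma}=(p!/(p-k)!)^2\,D\tilde{\bG}D$, where $\tilde{\bG}$ is the Gram matrix of $\psi_i(x):=\ReLU^{q}(w_ix-b_i)$ and $q:=p-k\ge 1$. Similarity by $D$ preserves the spectrum, so it suffices to bound $\kappa(\tilde{\bG})$. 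Theorem~\ref{theorem_bound} immediately yields $\lambda_1^{(k)}\ge\tfrac{N}{2}\mu_{1,q}-C=\Omega(N)$, since $\mu_{1,q}>0$ is a fixed positive constant.

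The main work is upper bounding $\lambda_N^{(k)}$. I select a set $S$ of $q+2$ indices that (i) share a common sign $w_i\in\{\pm 1\}$ and (ii) whose bias values lie in an interval of length $\mathcal{O}(N^{-1})$; by pigeonhole applied to any $2(q+2)$ consecutive indices in the $b$-ordering, combined with the quasi-even spacing $|b_{n+1}-b_n|=2/N+o(N^{-1})$, such an $S$ exists for all sufficiently large $N$. On $S$ I define the divided-difference coefficients
\[
c_i=\prod_{j\in S,\,j\ne i}(b_i-b_j)^{-1},\qquad i\in S,
\]
and set $\bv=(c_i)_{i\in S}$, padded with zeros outside $S$. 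The classical vanishing-moment property $\sum_{i\in S}c_i\,b_i^{\ell}=0$ for $0\le\ell\le q$ (the divided difference of a polynomial of degree $\le q$ is zero), combined with expanding $(x-b_i)^q$ in $b_i$, yields exactly
\[
\sum_{i\in S}c_i\,\ReLU^{q}(x-b_i)=\frac{(-1)^{q+1}}{q+1}\,M^{q}_S(x),
\]
where $M^{q}_S$ is the normalized B-spline with knot set $\{b_i:i\in S\}$. Since $M^{q}_S\ge 0$, $\int M^{q}_S=1$, and its support has length $\mathcal{O}(N^{-1})$, we get $\|M^{q}_S\|_{L^2}^2=\mathcal{O}(N)$. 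Meanwhile the spacing gives $|c_i|=\Theta(N^{q+1})$, so $\|\bv\|^2=\Theta(N^{2(q+1)})$, and the Rayleigh-quotient estimate produces $\lambda_N^{(k)}\le \bv^{\top}\tilde{\bG}\bv/\|\bv\|^2=\mathcal{O}(N^{-(2q+1)})$.

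Dividing the two bounds yields $\kappa(\bG^{(k)}_{\sigma})=\Omega(N)/\mathcal{O}(N^{-(2q+1)})=\Omega(N^{2(p-k)+2})$, which implies the claimed $\Omega(N^{1+2(p-k)})$. The main obstacle is the combined pigeonhole/spacing argument needed to extract a same-sign cluster of $q+2$ indices inside an $\mathcal{O}(N^{-1})$ window; everything downstream is routine. A pleasant feature of the divided-difference formulation is that the identity is exact on whichever biases happen to occur, so the $o(N^{-1})$ deviation of the $b_i$'s from a uniform grid is absorbed automatically and no separate perturbation analysis of the B-spline identity is needed.
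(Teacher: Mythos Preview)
Your proposal is correct and in fact proves a bound one power of $N$ stronger than stated, matching the sharp $\Theta(N^{2+2(p-k)})$ of Theorem~\ref{theorem_sharp_eigenvalue}. Your route, however, is genuinely different from the paper's. The paper argues purely spectrally: it writes $\bG=\boldsymbol{\Phi}_M\bSigma_M\boldsymbol{\Phi}_M^\top+\bR_M$ using the first $M$ eigenpairs of the kernel operator $\cK_q$, bounds $|\lambda_i-\tfrac{N}{2}\mu_{i,q}|$ via Weyl's inequality together with $\|\boldsymbol{\Phi}_M^\top\boldsymbol{\Phi}_M-\tfrac{N}{2}\bI\|_2=\cO(M^2)$ and the tail estimate $\|\bR_M\|_2=\cO(NM^{-(2q+1)})$, and then optimizes over $M$. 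This produces $\lambda_N=\cO(N^{-2q})$, one order weaker than your $\cO(N^{-(2q+1)})$. By contrast, your argument is constructive: the divided-difference/B-spline test vector yields an explicit small Rayleigh quotient, bypassing the kernel eigenfunction machinery entirely except for the single appeal to Theorem~\ref{theorem_bound} for $\lambda_1$. What the paper's decomposition buys is simultaneous control of \emph{all} $\lambda_i$ relative to $\mu_{i,q}$, not just the extremes; what your approach buys is sharpness at the bottom of the spectrum and a short, self-contained argument that handles the $o(N^{-1})$ perturbations in the $b_i$ automatically. Your same-sign pigeonhole step to accommodate the mixed signs $w_n=\pm1$ is the only ingredient not already present in the paper, and it goes through as you describe: the $q+2$ selected biases sit inside a block of $2(q+2)$ consecutive nodes, so all pairwise gaps are $\Theta(N^{-1})$, which is precisely what your $|c_i|=\Theta(N^{q+1})$ and $\|M^q_S\|_{L^2}^2=\cO(N)$ estimates require.
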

\begin{proof} See Appendix~\ref{proof_theorem_condition_number1}.
\end{proof}

With evenly spaced $\bb$, i.e., $b_i= -1+2N^{-1}(n-1)$, $i=1,2,\dots,N$, we can have a sharper estimate of the spectrum.
\begin{theorem}\label{theorem_sharp_eigenvalue}
Suppose $\sigma=\ReLU^p$ and integer $0\leq k < p$. Let $\lambda_1^{(k)}\geq \cdots\geq \lambda_{N}^{(k)}$ be the eigenvalues of $\bG_\sigma^{(k)}$. The following 
\begin{equation}
\frac{\lambda_N^{(k)}}{\lambda_j^{(k)}}\sim \frac{N^{2+2(p-k)}}{j^{2+2(p-k)}}\;.
\end{equation}
holds as $N\to+\infty$. Consequently, the condition number 
\begin{equation}
\kappa(\bG_{\sigma}^{(k)}) = \Theta(N^{2+2(p-k)})\;,
\end{equation}
that is, there exist constant $c>0$, $C>0$, and an integer $N_0$ such that  $cN^{2+2(p-k)}\leq \kappa(\bG_{\sigma}^{(k)})\leq CN^{2+2(p-k)}$ holds for any $N\geq N_0$.
\end{theorem}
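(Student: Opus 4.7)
The plan is to read off the spectrum of $\bG_\sigma^{(k)}$ from that of the kernel operator $\cK_{p-k}$ via Theorem~\ref{theorem_bound}, after first sharpening Proposition~\ref{prop_decaying} to a two-sided asymptotic $\mu_{n,p-k} = \Theta(n^{-(2(p-k)+2)})$. This upgrade is essentially free: the Birkhoff--Stone asymptotic invoked in the proof of Proposition~\ref{prop_decaying} actually delivers $\rho_{n,p} = \Theta(n)$, so inverting the relation $\mu_{n,p} = (p!)^2 \rho_{n,p}^{-(2p+2)}$ supplies the matching lower bound on $\mu_{n,p-k}$.

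Given the two-sided kernel asymptotic, Theorem~\ref{theorem_bound} immediately yields $\lambda_j^{(k)} = \tfrac{N}{2}\mu_{j,p-k} + \cO(1)$ for every $j$, which already pins down $\lambda_1^{(k)} = \Theta(N)$ and, for each \emph{fixed} $j$, $\lambda_j^{(k)} \sim \tfrac{N}{2}\mu_{j,p-k}$. The delicate endpoint is the smallest eigenvalue: the target $\tfrac{N}{2}\mu_{N,p-k} = \Theta(N^{-(2(p-k)+1)})$ is $o(1)$, so the constant additive error from Theorem~\ref{theorem_bound} is far too crude to isolate $\lambda_N^{(k)}$. A quantitatively sharper approximation is required precisely at this end, since the ratio $\lambda_N^{(k)}/\lambda_j^{(k)}$ is the quantity claimed.

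To obtain it, I would revisit the Riemann-sum argument underlying the proof of Theorem~\ref{theorem_bound} and specialize it to the evenly spaced grid $b_i = -1 + 2(i-1)/N$. The eigenfunctions $\varphi_{n,p-k}$ of $\cK_{p-k}$ are analytic solutions of the constant-coefficient ODE in~\eqref{eq_eigen_ODE} with oscillation scale $\omega_{n,p-k} = \Theta(n)$, so the uniform-grid quadrature approximates integrals against them with error much smaller than $\cO(1)$, and one can track how this error depends on both $n$ and $N$. An alternative route is to notice that uniform spacing makes $\bG_\sigma^{(k)}$ a finite-section Toeplitz matrix up to boundary corrections of bounded rank; Szeg\H{o}-type asymptotics for Toeplitz matrices whose generating symbol vanishes to order $2(p-k)+2$ then give $\lambda_{\min} = \Theta(N^{-(2(p-k)+1)})$ directly. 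Either route is designed to upgrade Theorem~\ref{theorem_bound} to $\lambda_j^{(k)} \asymp N \cdot j^{-(2+2(p-k))}$ uniformly in $1\leq j\leq N$.

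Assembling the refined per-mode estimates produces the stated ratio, and specializing to $j=1$ and $j=N$ gives the upper bound $\kappa(\bG_\sigma^{(k)}) \leq CN^{2+2(p-k)}$; the matching lower bound (one power of $N$ stronger than Theorem~\ref{theorem_condition_number1}) drops out of the same computation. The main obstacle is clearly this Step~3 refinement: the generic $\cO(1)$ discretization error swamps the tiny value of $\lambda_N^{(k)}$, so the real technical work is to exploit either the analyticity of the $\varphi_{n,p-k}$ or the translation symmetry of the uniformly spaced configuration to beat the crude additive error coming from the general quasi-evenly-spaced setting.
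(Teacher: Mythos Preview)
Your diagnosis of the difficulty is correct: Theorem~\ref{theorem_bound} only controls $\lambda_j^{(k)}$ up to an additive $\cO(1)$, which swamps the putative value $\lambda_N^{(k)}=\Theta(N^{-(2(p-k)+1)})$. But both of the fixes you sketch have problems, and the paper takes an entirely different route that you overlooked.

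First, your Toeplitz alternative does not get off the ground: $\bG_\sigma^{(k)}$ is \emph{not} Toeplitz up to a bounded-rank correction. With $w_i=1$ and evenly spaced $b_i$, the $(i,j)$-entry equals $\int_{\max(b_i,b_j)}^{1}(x-b_i)^{p-k}(x-b_j)^{p-k}\,dx$, which depends on both $b_j-b_i$ \emph{and} $1-\max(b_i,b_j)$; in particular the diagonal entries range from $\Theta(1)$ (for $i=1$) down to $\Theta(N^{-(2(p-k)+1)})$ (for $i=N$), so no finite-rank perturbation makes the matrix Toeplitz. Szeg\H{o}-type asymptotics therefore do not apply directly to $\bG_\sigma^{(k)}$.

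The paper instead exploits Proposition~\ref{prop_refined} and the explicit congruence~\eqref{eq_relate_G_sigma_G_FEM}: $\bG_{\FEMsp}^{(k)}=\bW^\top\bG_\sigma^{(k)}\bW$ with the banded lower-triangular Toeplitz matrix $\bW$ of~\eqref{eq_def_W}. An Ostrowski-type inequality (Theorem~6 of \cite{hong2022activation}) then sandwiches $\lambda_{N+1-j}^{(k)}$ between $\lambda_{\min}(\bG_{\FEMsp}^{(k)})/\lambda_j(\bW\bW^\top)$ and $\lambda_{\max}(\bG_{\FEMsp}^{(k)})/\lambda_j(\bW\bW^\top)$. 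The B-spline Gram matrix has $\lambda_{\min},\lambda_{\max}=\Theta(1)$ by the Riesz-basis property, so \emph{all} the ill-conditioning is pushed onto $\bW$, and $\bW$ \emph{is} a genuine banded Toeplitz matrix with symbol $(1-z)^{p+1}$; the Avram--Parter theorem gives $\lambda_j(\bW\bW^\top)$ exactly the polynomial growth you want. This bypasses the kernel route entirely and yields the sharp two-sided bound in one stroke. Your first alternative (refining the quadrature error in Theorem~\ref{theorem_bound} mode-by-mode up to $n=N$) might eventually succeed, but it is far more laborious than using the B-spline change of basis that the paper has already set up.
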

\begin{proof}
See Appendix~\ref{proof_theorem_sharp_eigenvalue}.
\end{proof}

In the following, we conduct numerical experiments to validate the above spectral analysis and discuss its implications. As there are only constant factor differences among $\bG_{\sigma}^{(k)}$ with different orders of derivative $k$, we will focus on $\bG_p:=\bG_{\sigma}^{(0)}$ with $\sigma=\ReLU^p$ for $p\geq 1$. We compute the entries of $\bG_p$, i.e., integrals of products of $\ReLU^p$ functions, using exact formulas collected in Appendix~\ref{sec_explicit_formula}. To perform the eigen-decomposition of $\bG_p$, we use QR-based function \texttt{eigh} from \texttt{Python} package \texttt{scipy}. This algorithm is backward stable, and the absolute errors of the computed eigenvalues are proportional to the float-point arithmetic accuracy $\epsilon_M$; using \texttt{float64}, i.e., double precision, which is approximately $1\times 10^{-16}$.

\begin{figure}
\begin{center}
\begin{tabular}{c@{\vspace{2pt}}c@{\vspace{2pt}}c}
(a)&(b)&(c)\\
\includegraphics[width=0.33\textwidth]{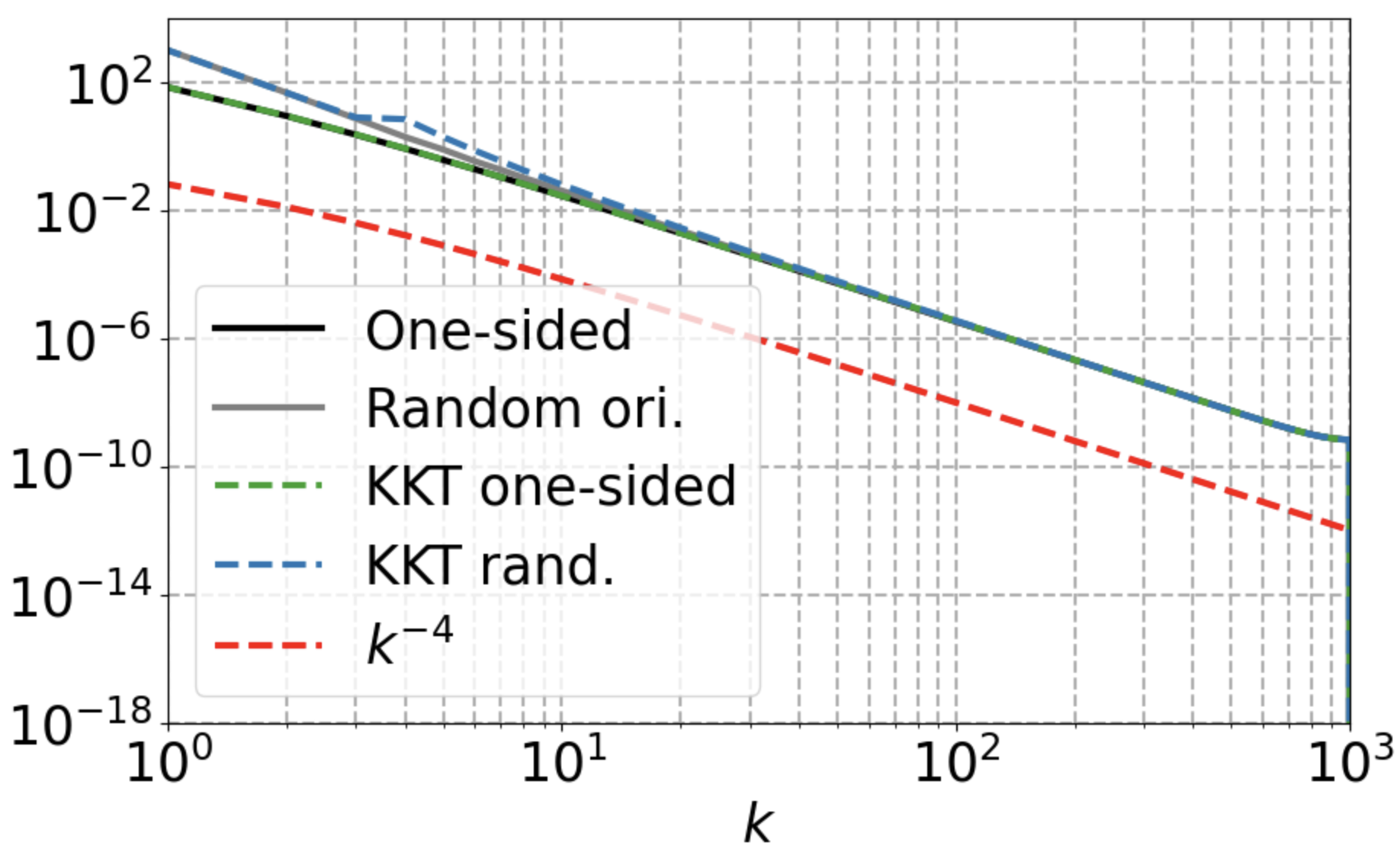}&
\includegraphics[width=0.33\textwidth]{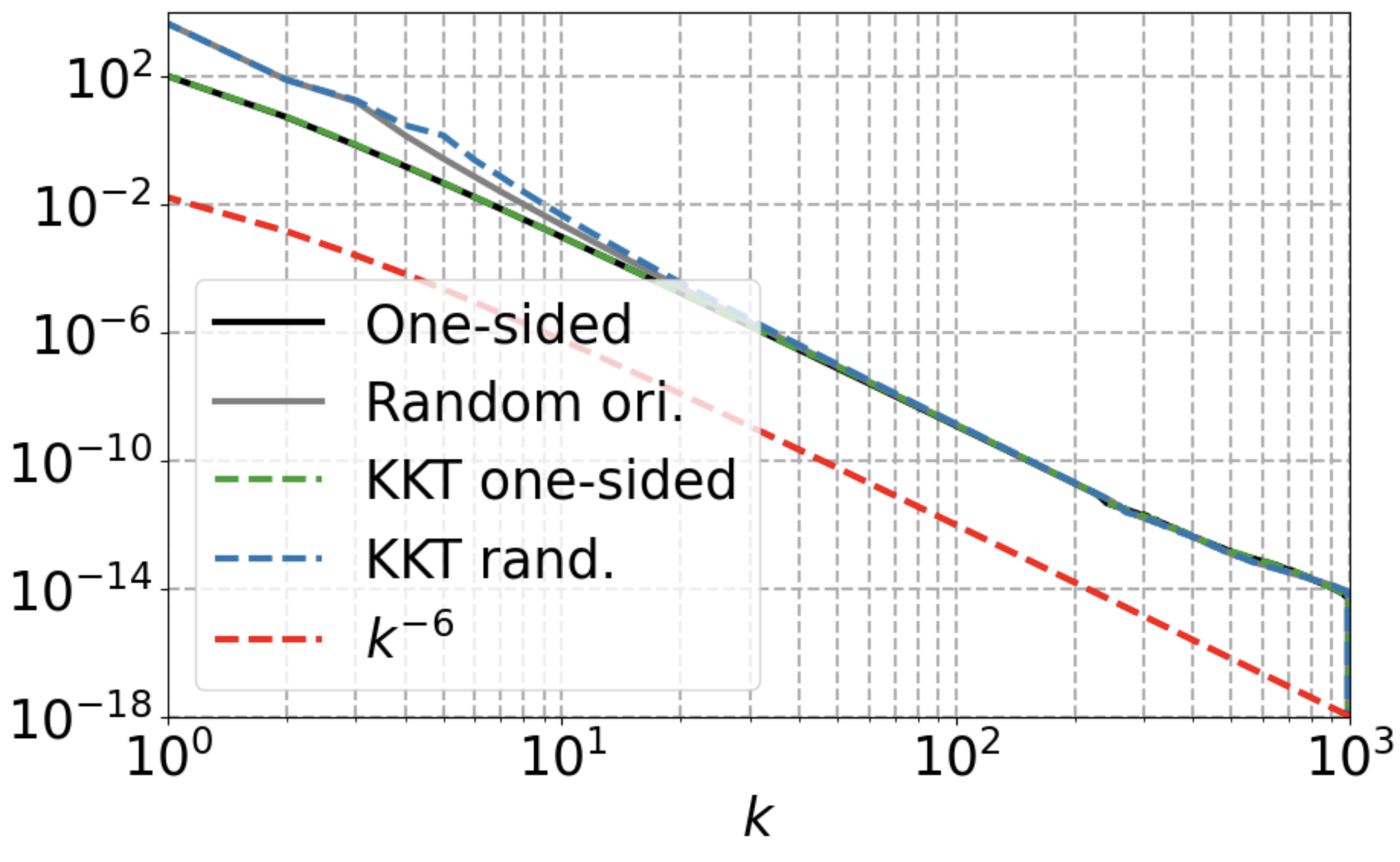}&
\includegraphics[width=0.33\textwidth]{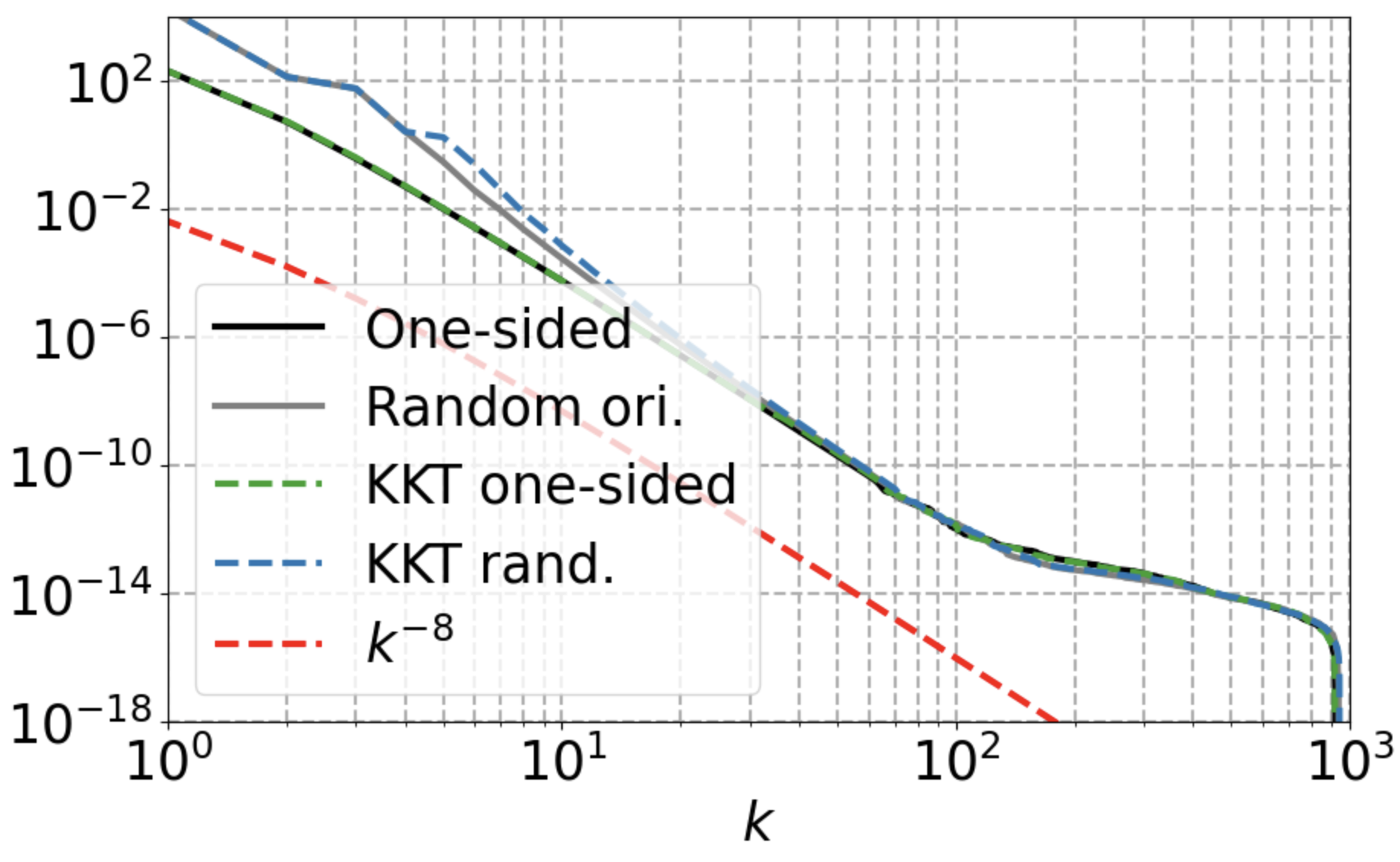}
\end{tabular}
\end{center}
\caption{Spectrum of the $1000\times1000$ Gram matrices associated with  ReLU$^{p}$ activation function for (a) $p=1$, (b) $p=2$, (c) $p=3$. The red dashed curves show the theoretically predicted decay rate $k^{-(2p+2)}$, for each $p$. }\label{fig_spectrum}
\end{figure}

In Figure~\ref{fig_spectrum}, we show the spectrum of $\bG_p\in\mathbb{R}^{1000\times 1000}$ in the descending order  with uniform bias $\bb$ and $\bw=(1,\cdots, 1)$ (One-sided) for $p=1$ in (a), $p=2$ in (b), and $p=3$ in (c). According to Theorem~\ref{theorem_sharp_eigenvalue}, the $k$-th eigenvalue of $\bG_p$ is of order $\cO(k^{-(2p+2)})$. To validate this statement, we also plot $k^{-(2p+2)}$ in red dashed lines for reference, and we observe that the decaying behaviors align perfectly with our analysis for eigenvalues above $10^{-12}$. Due to the finite precision of the floating-point arithmetic, the smaller eigenvalues are less accurate, exhibiting deviation from the prediction. Such instability is more significant when a greater power $p$ is used, as the decaying rate becomes faster. We also added the spectrum for the Gram matrix, where the weight parameters $\bw$ take the values $1$ or $-1$ with probability $1/2$ (Random ori.). The comparison shows that the decaying behaviors are similar. 

In Figure~\ref{fig_eigenvectors}, we plot the eigenvectors of the Gram matrix $\bG_2$, i.e., for shallow networks with $\ReLU^2$ as the activation function. From (a) to (c), we observe that the eigenvectors corresponding to the smaller eigenvalues exhibit higher-frequency oscillations, which aligns with our theoretical observation in~\eqref{eq_general_eigen}. Here we use $N=1000$ and note that it is not necessary to compute or display eigenvectors associated with even smaller eigenvalues. Indeed, the accuracy of the computed eigenvectors depends on the gaps between the corresponding eigenvalues and their nearest neighbors (see, for example, Theorem 11.7.1 in~\cite{parlett1998symmetric}).  Specifically, the angular difference between the computed $i$-th  normalized eigenvector $\widehat{\bv}_i$ and the corresponding true eigenvector $\bv_i$ can be as bad as $\cO(\epsilon_M/\text{gap}(\mu_i))$, where $\text{gap}(\mu_i):=\min_{j\neq i}|\mu_i-\lambda_j|$ with $\mu_i:= \widehat{\bv}_i^\top\bG_p\widehat{\bv}_i$ being the Ritz value associated with the $i$-th eigenvector.   By Theorem~\ref{theorem_sharp_eigenvalue}, $\text{gap}(\mu_i)$ is in  order of $\cO(i^{-3-2p})$ as the matrix size $N\to+\infty$, indicating that the number of bits should  grow as $\cO((3 + 2p)\log_{10}i)$ to maintain a prescribed level of accuracy. In particular, to accurately compute the eigenvector corresponding to the smallest eigenvalue, approximately $\cO(2 \log_{10} N)$ additional digits are needed when the regularity parameter $p$ is increased by one.

\begin{figure}
\begin{center}
\begin{tabular}{c@{\vspace{2pt}}c@{\vspace{2pt}}c}
(a)&(b)&(c)\\
\includegraphics[width=0.33\textwidth]{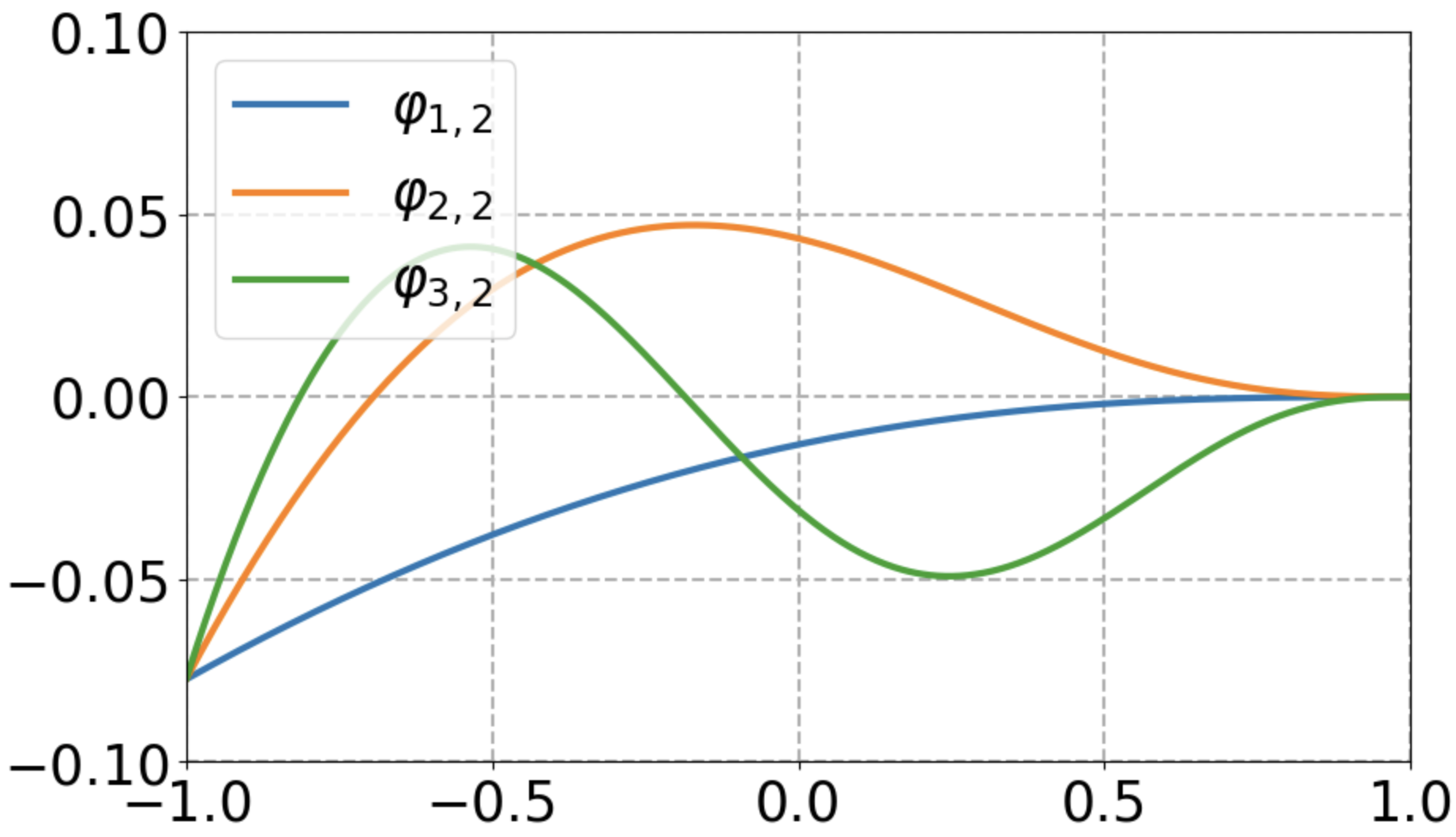}&
\includegraphics[width=0.33\textwidth]{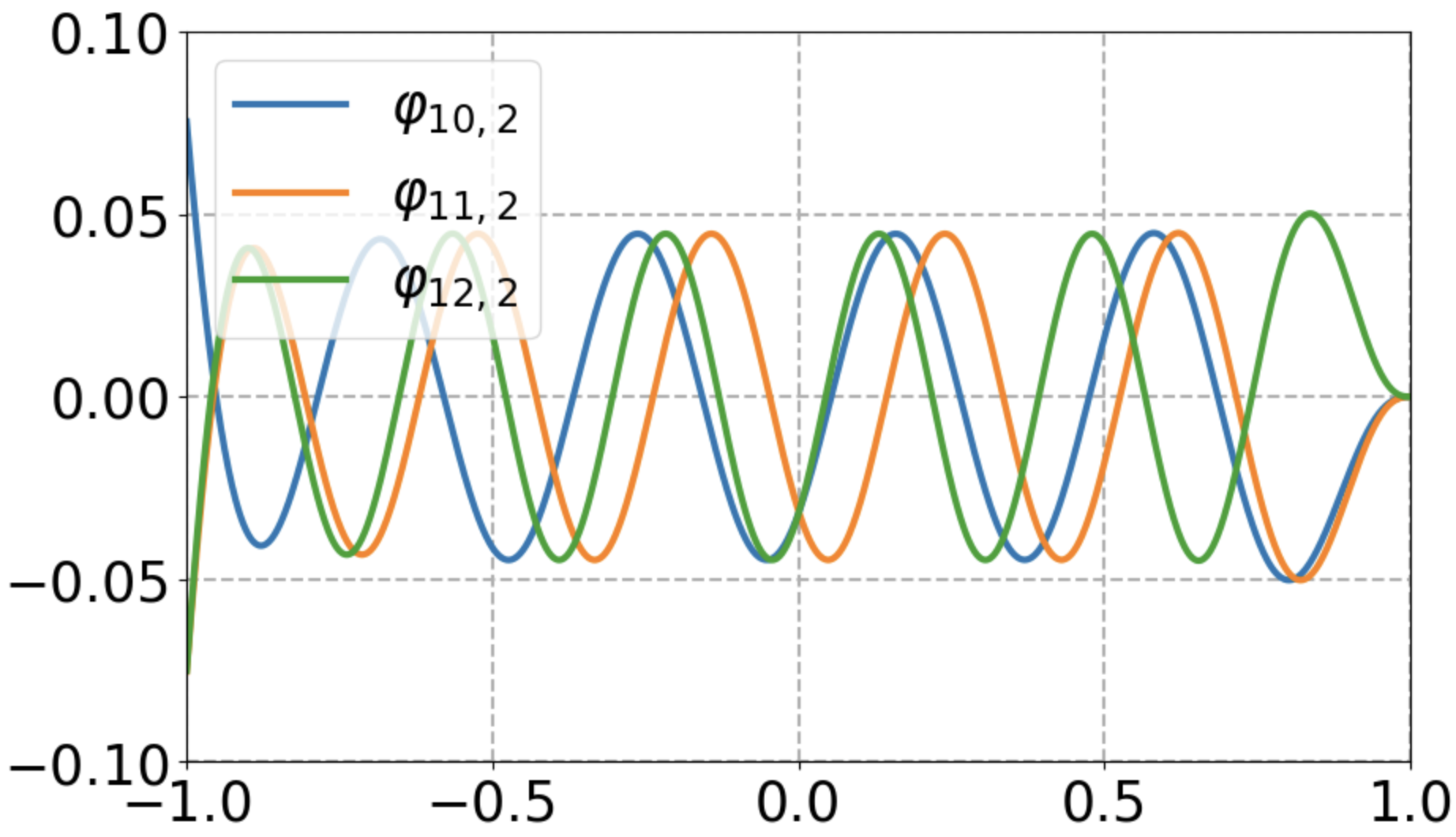}&
\includegraphics[width=0.33\textwidth]{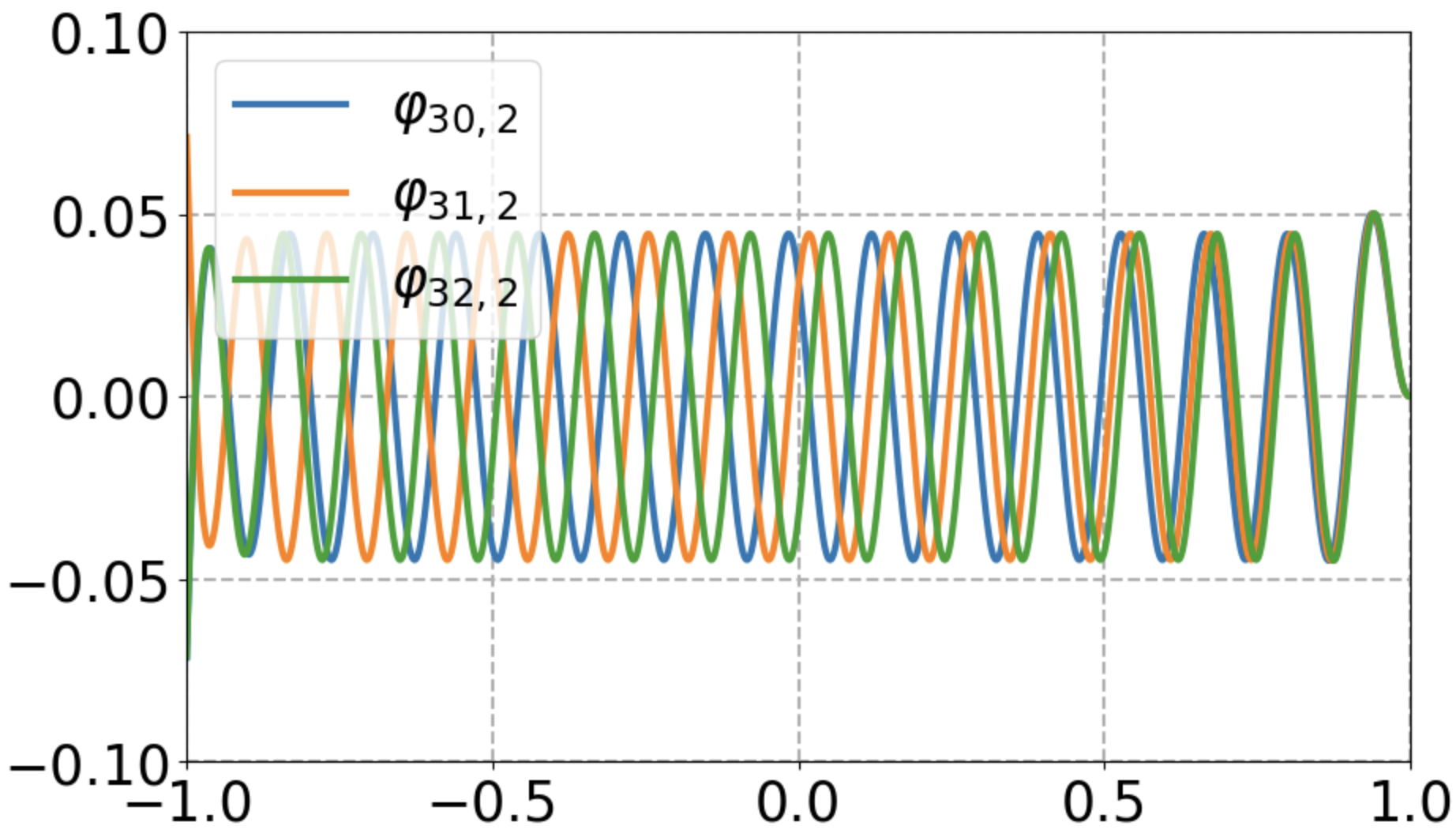}
\end{tabular}
\end{center}
\caption{Eigenvectors  of the Gram matrix $\bG_2$ corresponding to shallow networks with $\ReLU^2$ using $1000$ uniform biases $\bb$ with all-one $\bw$ ordered according to the descending eigenvalues. }\label{fig_eigenvectors}
\end{figure}

\subsection{Accuracy of direct solvers with boundary constraints}\label{sec_constraint_accuracy}

Based on the analysis in the previous section, we investigate how the spectral properties of the Gram matrices influence the direct solution of the quadratic problems~\eqref{eq_constrained_quad}. In particular, for the constrained optimization~\eqref{eq_constrained_quad}, the  Karush–Kuhn–Tucker (KKT) condition for the unique  minimizer $\ba^*\in\mathbb{R}^{N}$ is 
\begin{equation}\label{eq_opt_KKT}
\bK_F\bz_F^* :=\bK_F\begin{bmatrix}
\ba^*\\
\lambda^*_1\\
\lambda^*_2
\end{bmatrix} =
\begin{bmatrix}
 \bG_{F}&\bB^\top\\
\bB&\mathbf{0}
\end{bmatrix}\begin{bmatrix}
\ba^*\\
\lambda^*_1\\
\lambda^*_2
\end{bmatrix}=\begin{bmatrix}
\by_{F}\\
c_L\\
c_R
\end{bmatrix}\;,
\end{equation}
where  $\bK_F\in\mathbb{R}^{(N+2)\times(N+2)}$ is called the KKT matrix. Note that $\bK_F$ is nonsingular if $\bG_F$ is positive definite and $\bB$ defined in~\eqref{eq_B_matrix} with $\sigma = \ReLU^p$ activation functions has rank 2. The symmetric structure of $\bK_F$ implies it has exactly $N$ positive eigenvalues $\lambda_1(\bK_F)\geq \lambda_2(\bK_F)\geq \cdots\geq \lambda_N(\bK_F)>0$ and two negative eigenvalues $0>\lambda_{N+1}(\bK_F)\geq \lambda_{N+2}(\bK_F)$ related to the Schur complement of the block $\bG_F$. Moreover, since $\bB$ has rank $2$ and 
$$
\bK_F =\begin{bmatrix}\bG_F&\mathbf{0}\\
\mathbf{0}&\mathbf{0}
\end{bmatrix} + \begin{bmatrix}\mathbf{0}&\bB^\top\\
\bB&\mathbf{0}\;
\end{bmatrix}\;,
$$
by the eigenvalue interlacing theorem (See for example~\cite{carlson1983minimax}), we have
\begin{equation}
\lambda_{i}(\bG_F)\leq \lambda_{i}(\bK_F)\leq \lambda_{i-4}(\bG_F), \quad i=5,\dots,N.
\end{equation}

Combined with Theorem~\ref{theorem_sharp_eigenvalue}, we deduce the following spectral estimate.

\begin{proposition}\label{prop_decay_KKT}
For PINN with $\ReLU^p$ where $p\geq 2$, we have
\begin{equation}
\lambda_j(\bK_\PINN) =\Theta\left(j^{2-2p}\right)\;.
\end{equation}
For DRM with $\ReLU^p$ where $p\geq 1$, we have
\begin{equation}
\lambda_j(\bK_\DRM) =\Theta\left(j^{-2p}\right)\;.
\end{equation}
\end{proposition}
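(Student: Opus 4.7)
The plan is to transfer the known decay of the eigenvalues of $\bG_F$ (supplied by Theorem~\ref{theorem_sharp_eigenvalue}) to $\bK_F$ via eigenvalue interlacing applied to a low-rank perturbation. First, I would use the decomposition recorded just above the proposition,
\[
\bK_F = \begin{bmatrix} \bG_F & \mathbf{0} \\ \mathbf{0} & \mathbf{0} \end{bmatrix} + \begin{bmatrix} \mathbf{0} & \bB^\top \\ \bB & \mathbf{0} \end{bmatrix},
\]
together with the fact that the second summand is symmetric of rank at most $4$ (since $\bB \in \mathbb{R}^{2 \times N}$ has rank $2$). Weyl-type interlacing then yields, as already noted in the text, $\lambda_i(\bG_F) \leq \lambda_i(\bK_F) \leq \lambda_{i-4}(\bG_F)$ for $5 \leq i \leq N$, sandwiching the $i$-th positive eigenvalue of $\bK_F$ between two eigenvalues of $\bG_F$ whose indices differ by four.

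Next, I would invoke Theorem~\ref{theorem_sharp_eigenvalue} to supply the asymptotic shape of $\lambda_j(\bG_F)$. Specialising to $\bG_\PINN = \bG_\sigma^{(2)}$ (so $k=2$) gives $\lambda_j(\bG_\PINN) = \Theta(j^{-(2p-2)}) = \Theta(j^{2-2p})$, and to $\bG_\DRM = \bG_\sigma^{(1)}$ (so $k=1$) gives $\lambda_j(\bG_\DRM) = \Theta(j^{-2p})$. Since $(i-4)^{-\alpha}$ and $i^{-\alpha}$ agree up to a bounded factor $(i/(i-4))^{\alpha}$ for $i \geq 5$ and any fixed $\alpha > 0$, the two interlacing bounds are of the same asymptotic order, yielding the claimed $\Theta$ estimates for $\lambda_i(\bK_F)$ over $5 \leq i \leq N$.

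The four largest positive eigenvalues of $\bK_F$ require a separate remark. They are positive (the inertia of $\bK_F$ supplies exactly $N$ positive and $2$ negative eigenvalues), and Weyl perturbation gives $\lambda_4(\bG_F) \leq \lambda_j(\bK_F) \leq \lambda_1(\bG_F) + \|\bB\|$ for $j = 1, \dots, 4$, while the targets $j^{2-2p}$ and $j^{-2p}$ are bounded constants on this finite index set; a harmless adjustment of the $\Theta$-constants absorbs them.

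The main technical point is the tail index $i = N$, where the interlacing upper bound reads $\lambda_{N-4}(\bG_F)$ and must match the lower bound $\lambda_N(\bG_F)$ up to a constant in order to pin down a genuine $\Theta$ estimate at the smallest positive eigenvalue. This is precisely the step that forces me to use the sharp two-sided content of Theorem~\ref{theorem_sharp_eigenvalue}, which supplies $\lambda_{N-4}(\bG_F)/\lambda_N(\bG_F) \sim (N/(N-4))^{2+2(p-k)} \to 1$ as $N \to \infty$, thereby closing the argument; away from the tail only the power-law decay rate is used.
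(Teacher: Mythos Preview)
Your proposal is correct and follows essentially the same route as the paper: the text immediately preceding the proposition records the decomposition of $\bK_F$ into the block-diagonal matrix carrying $\bG_F$ plus a rank-$4$ symmetric perturbation, derives the interlacing bounds $\lambda_i(\bG_F)\le\lambda_i(\bK_F)\le\lambda_{i-4}(\bG_F)$, and then states that combining this with Theorem~\ref{theorem_sharp_eigenvalue} yields the proposition. Your write-up supplies the extra bookkeeping (the first four indices, the tail ratio $\lambda_{N-4}/\lambda_N$) that the paper leaves implicit, but the argument is the same.
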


In Figure~\ref{fig_spectrum}, we also include the spectrum of the KKT matrix for the $\ReLU^p$ activation function with $p=1,2$, and $3$, which exhibits the same decay property as described in Proposition~\ref{prop_decay_KKT}. We can be more specific as follows. 
\begin{lemma} \label{lemma_eigenvector_KF}There are at least $N-2$ eigenvectors of $\bK_F$ of the form $\widetilde{\bx}^\top = [\bx^\top, \mathbf{0}^\top]$ with eigenvalue $\lambda>0$, where $\bx$ is an eigenvector of  $\bG_F$ corresponding to eigenvalue $\lambda$. 
\end{lemma}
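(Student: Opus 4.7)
The plan is to read off the eigenvalue condition from the block structure of $\bK_F$ and then produce the required eigenvectors from the restriction of $\bG_F$ to $\ker(\bB)$.

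First, a direct block computation for $\widetilde{\bx} = [\bx^\top, \mathbf{0}^\top]^\top$ gives
$$\bK_F\widetilde{\bx} = \begin{bmatrix}\bG_F & \bB^\top\\ \bB & \mathbf{0}\end{bmatrix}\begin{bmatrix}\bx\\ \mathbf{0}\end{bmatrix} = \begin{bmatrix}\bG_F\bx\\ \bB\bx\end{bmatrix}.$$
Hence $\bK_F\widetilde{\bx} = \lambda\widetilde{\bx}$ is equivalent to the two simultaneous conditions $\bG_F\bx = \lambda\bx$ and $\bB\bx = \mathbf{0}$. This already pins down the structure in the lemma: $\bx$ must be an eigenvector of $\bG_F$ with eigenvalue $\lambda$, lying in $V := \ker(\bB)$, and positivity of $\lambda$ is automatic from the assumed positive definiteness of $\bG_F$.

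Next, I would count how many such $\bx$ exist. Under the standing nondegeneracy assumption, $\bB$ has full row rank $2$, so $\dim V = N - 2$. Provided $V$ is $\bG_F$-invariant, the restriction $\bG_F|_V$ is a symmetric positive-definite operator on an $(N-2)$-dimensional space and admits an orthonormal eigenbasis $\bx_1,\dots,\bx_{N-2}$; because each $\bx_i$ is a genuine eigenvector of $\bG_F$ on $\mathbb{R}^N$ with positive eigenvalue $\lambda_i$, the lifts $\widetilde{\bx}_i = [\bx_i^\top,\mathbf{0}^\top]^\top$ furnish the $N-2$ linearly independent eigenvectors of $\bK_F$ of the claimed form.

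The main obstacle is therefore to verify the $\bG_F$-invariance of $V$. By symmetry of $\bG_F$ this is equivalent to the existence of a $2\times 2$ matrix $\bM$ with $\bG_F\bB^\top = \bB^\top\bM$, i.e., $\bG_F$ preserves $\operatorname{range}(\bB^\top)$. I would try to establish this by exploiting the interplay between the interior integrals that define the entries of $\bG_F$ and the pointwise boundary evaluations forming the rows of $\bB$; for the $\ReLU^p$ family, integration by parts naturally exchanges these two ingredients and should supply the required closure. If the closure only holds in a weakened sense, the conclusion can still be recovered by summing the dimensions of the intersections $\ker(\bG_F - \lambda I)\cap V$ across the distinct eigenvalues of $\bG_F$ and checking that they exhaust $V$.
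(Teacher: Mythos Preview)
Your block computation and the identification of the conditions $\bG_F\bx=\lambda\bx$, $\bB\bx=\mathbf{0}$ are correct, and you are right that everything hinges on whether $V=\ker(\bB)$ is $\bG_F$-invariant. The gap is that this invariance does \emph{not} follow from the structure of $\bG_F$ and $\bB$ here, and your integration-by-parts sketch cannot supply it. Writing $\phi_j(x)=\sigma(w_jx-b_j)$, one has $[\bG_F]_{ij}=\int_{-1}^{1}\phi_i^{(k)}\phi_j^{(k)}\,dx$ while the rows of $\bB$ are $(\phi_j(\pm 1))_j$; integrating by parts converts the interior pairings into combinations of boundary values $\phi_j^{(m)}(\pm 1)$ for several derivative orders $m$, not into $\phi_j(\pm 1)$ alone, so there is no identity forcing $\bG_F\bB^\top$ into $\text{ran}(\bB^\top)$. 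A small numerical check (four $\ReLU^2$ neurons with generic weights and biases, say) already shows $\bG_F\bB^\top\neq\bB^\top\bM$ for any $2\times 2$ matrix $\bM$. Your fallback---summing $\dim\bigl(\ker(\bG_F-\lambda\bI)\cap V\bigr)$ over the distinct eigenvalues $\lambda$---does not rescue the argument either: when $\bG_F$ has simple spectrum and none of its eigenvectors lie in $V$, that sum is zero even though $\dim V=N-2$.

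For comparison, the paper argues from the other direction: it fixes an eigenvector $\bx$ of $\bG_F$, tries to extend it to an eigenvector $[\bx^\top,\bb^\top]^\top$ of $\bK_F$, and derives the compatibility condition $\bB^\top\bB\bx=\mu(\mu-\lambda)\bx$. It then asserts that every eigenvector of $\bG_F$ lies in $\text{ran}(\bB^\top)\cup\ker(\bB)$---but that dichotomy is a \emph{consequence} of the extension existing, not a property of an arbitrary eigenvector of $\bG_F$, and it is equivalent to the invariance you isolated. So the obstacle you flagged is genuine and is shared with the paper's own proof; you have simply been more explicit about where it sits.
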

\begin{proof}
See Appendix~\ref{proof_lemma_eigenvector_KF}.
\end{proof}
\begin{remark}
The other at most four eigenvectors of $\bK_F$ are of the form $\widetilde{\bx}^\top = [\bx^\top,\bb^\top]$ where $\bb = \mu^{-1} \bB\bx$ and $\mu = (\lambda\pm\sqrt{\lambda^2+4\gamma})/2$ where $\bx$ is an eigenvector of $\bG_F$ for the eigenvalue $\lambda$, and $\gamma$ is an eigenvalue of $\bB^\top\bB$. 
\end{remark}

The analysis above has important implications for the numerical solutions of~\eqref{eq_opt_KKT}. Since $\bK_F$ is ill-conditioned, it is common to solve~\eqref{eq_opt_KKT} by truncated singular value decomposition (SVD). Specifically, let $\bU \bSigma\bV^\top$ be the SVD of $\bK_F$ where $\bU\in\mathbb{R}^{(N+2)\times (N+2)}$ and $\bV\in\mathbb{R}^{(N+2)\times (N+2)}$ are unitary matrices, and $\bSigma$ is a diagonal matrix recording singular values of $\bK_F$. For some prespecified tolerance parameter $\varepsilon>0$, let $\widetilde{\bSigma}$  be a matrix obtained by substituting diagonal elements with magnitude smaller than $\varepsilon\lambda_{1}(\bK_F)$ with $0$. Then the reduced SVD solution of~\eqref{eq_opt_KKT} is computed by
\begin{equation}
\widetilde{\bz}_F^*:=\begin{bmatrix}
\widetilde{\ba}^*\\
\widetilde{\lambda}_1^*\\
\widetilde{\lambda}_2^*
\end{bmatrix} = \bV\widetilde{\bSigma}^\dagger\bU^\top \begin{bmatrix}
\by_{F}\\
c_L\\
c_R
\end{bmatrix}\;,
\end{equation}
where $\widetilde{\bSigma}^\dagger$ is the pseudo-inverse of $\widetilde{\bSigma}$. We can characterize the error caused by SVD truncation.

\begin{corollary}\label{corollary_trunc_SVD_error}Define the set $\cI_N(\varepsilon)=\{j\in \cI_1~|~|\lambda_j(\bK_F)|<\varepsilon\lambda_1(\bK_F)\} $ where $\cI_1$ with $|\cI_1|\geq N-2$ is the set of indices  of eigenvectors of $\bK_F$ obtained by lifting those of $\bG_F$, then
\begin{equation}
\|\widetilde{\bz}_F^* - \bz_F^*\|^2\geq\sum_{i\in \cI_N(\varepsilon)} \left(\frac{\bu_i^\top\by_F}{\lambda_i(\bK_F)}\right)^2\;.
\end{equation}
\end{corollary}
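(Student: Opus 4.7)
The plan is to reduce the truncated-SVD analysis to the spectral decomposition of $\bK_F$ itself, since $\bK_F$ is symmetric and therefore its singular vectors coincide with its eigenvectors up to sign while its singular values equal $|\lambda_i(\bK_F)|$. Under this identification, truncating singular values below $\varepsilon\lambda_1(\bK_F)$ is the same as zeroing out the spectral components whose eigenvalues satisfy $|\lambda_i(\bK_F)|<\varepsilon\lambda_1(\bK_F)$. Writing $\br:=[\by_F^\top, c_L, c_R]^\top\in\mathbb{R}^{N+2}$ and letting $\{\bu_i\}_{i=1}^{N+2}$ be an orthonormal eigenbasis of $\bK_F$, I would expand
\begin{equation*}
\bz_F^* = \sum_{i=1}^{N+2}\frac{\bu_i^\top\br}{\lambda_i(\bK_F)}\bu_i, \qquad \widetilde{\bz}_F^* = \sum_{|\lambda_i(\bK_F)|\geq\varepsilon\lambda_1(\bK_F)}\frac{\bu_i^\top\br}{\lambda_i(\bK_F)}\bu_i,
\end{equation*}
and then use Parseval's identity to obtain the exact formula
\begin{equation*}
\|\widetilde{\bz}_F^*-\bz_F^*\|^2 = \sum_{|\lambda_i(\bK_F)|<\varepsilon\lambda_1(\bK_F)}\left(\frac{\bu_i^\top\br}{\lambda_i(\bK_F)}\right)^2.
\end{equation*}

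Next, I would invoke Lemma~\ref{lemma_eigenvector_KF} to identify, among the truncated modes, those lifted from eigenvectors of $\bG_F$. For every $i\in\cI_1$, the eigenvector takes the form $\bu_i=[\bx_i^\top,\mathbf{0}^\top]^\top$ where $\bx_i$ is a unit eigenvector of $\bG_F$ with the same eigenvalue $\lambda_i(\bK_F)$; in particular, $\bu_i^\top\br=\bx_i^\top\by_F$, which is precisely the quantity the corollary writes as $\bu_i^\top\by_F$. Restricting the sum to the subset $\cI_N(\varepsilon)\subseteq\cI_1$ of such indices whose eigenvalues fall below the truncation threshold then yields
\begin{equation*}
\|\widetilde{\bz}_F^*-\bz_F^*\|^2 \geq \sum_{i\in\cI_N(\varepsilon)}\left(\frac{\bu_i^\top\by_F}{\lambda_i(\bK_F)}\right)^2,
\end{equation*}
where the inequality comes from discarding the at most four non-lifted modes (together with any lifted modes not in $\cI_N(\varepsilon)$), which can only decrease a nonnegative sum.

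The only subtlety worth addressing carefully is the relation between the SVD used to define $\widetilde{\bz}_F^*$ and the symmetric eigendecomposition used in the expansion above. Because $\bK_F$ is symmetric and indefinite, writing $\bK_F=\bU\bSigma\bV^\top$ as an SVD amounts to choosing $\bV=\bU\operatorname{diag}(\sgn\lambda_i(\bK_F))$ (after a permutation sorting by $|\lambda_i|$), so that $\bV\widetilde{\bSigma}^\dagger\bU^\top$ coincides with the Moore–Penrose pseudoinverse of the truncated eigendecomposition; the sign flips cancel between $\bV$ and $\widetilde{\bSigma}^\dagger$, and the orthonormal basis used for Parseval's identity is unaffected. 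This observation — essentially bookkeeping on signs — is the main point where one should be careful, but it involves no new ideas beyond the spectral theorem for symmetric matrices.
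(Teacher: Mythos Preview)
Your proposal is correct and follows essentially the same route as the paper's proof: expand the truncation error in the eigenbasis of the symmetric matrix $\bK_F$, invoke Lemma~\ref{lemma_eigenvector_KF} to identify the lifted eigenvectors on which the inner product with the right-hand side reduces to $\bu_i^\top\by_F$, and restrict the nonnegative sum to $\cI_N(\varepsilon)$. Your treatment is in fact more careful than the paper's, which silently identifies the SVD with the eigendecomposition; your paragraph on the sign bookkeeping between $\bV$ and $\widetilde{\bSigma}^\dagger$ makes explicit a step the paper leaves implicit.
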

\begin{proof}
See Appendix~\ref{proof_corollary_trunc_SVD_error}.
\end{proof}

\begin{figure}
\centering
\begin{tabular}{c@{\vspace{2pt}}c@{\vspace{2pt}}c}
\toprule
$N=100$&$N=300$&$N=500$\\\midrule
(a)&(b)&(c)\\
\includegraphics[width=0.33\textwidth]{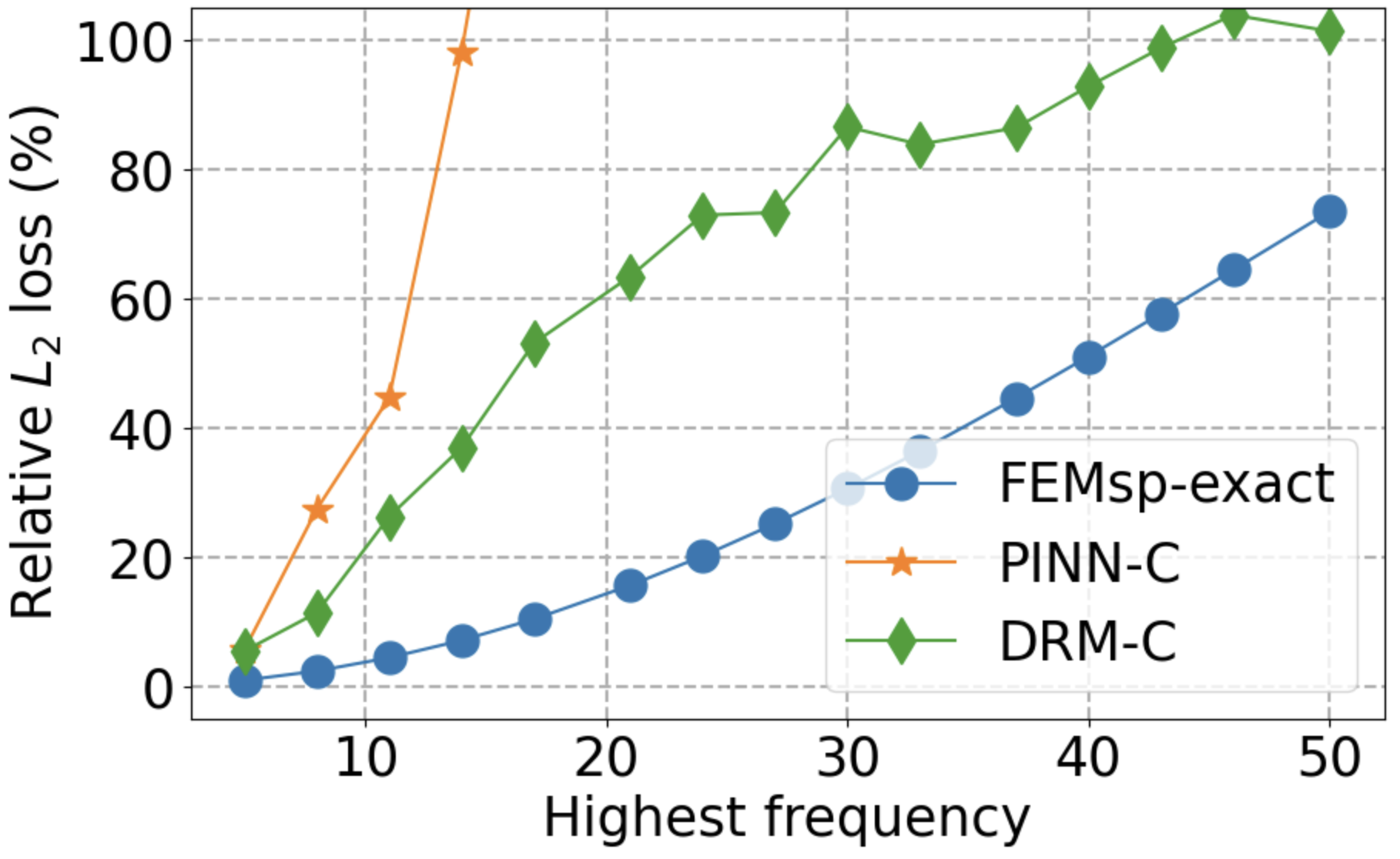}&
\includegraphics[width=0.33\textwidth]{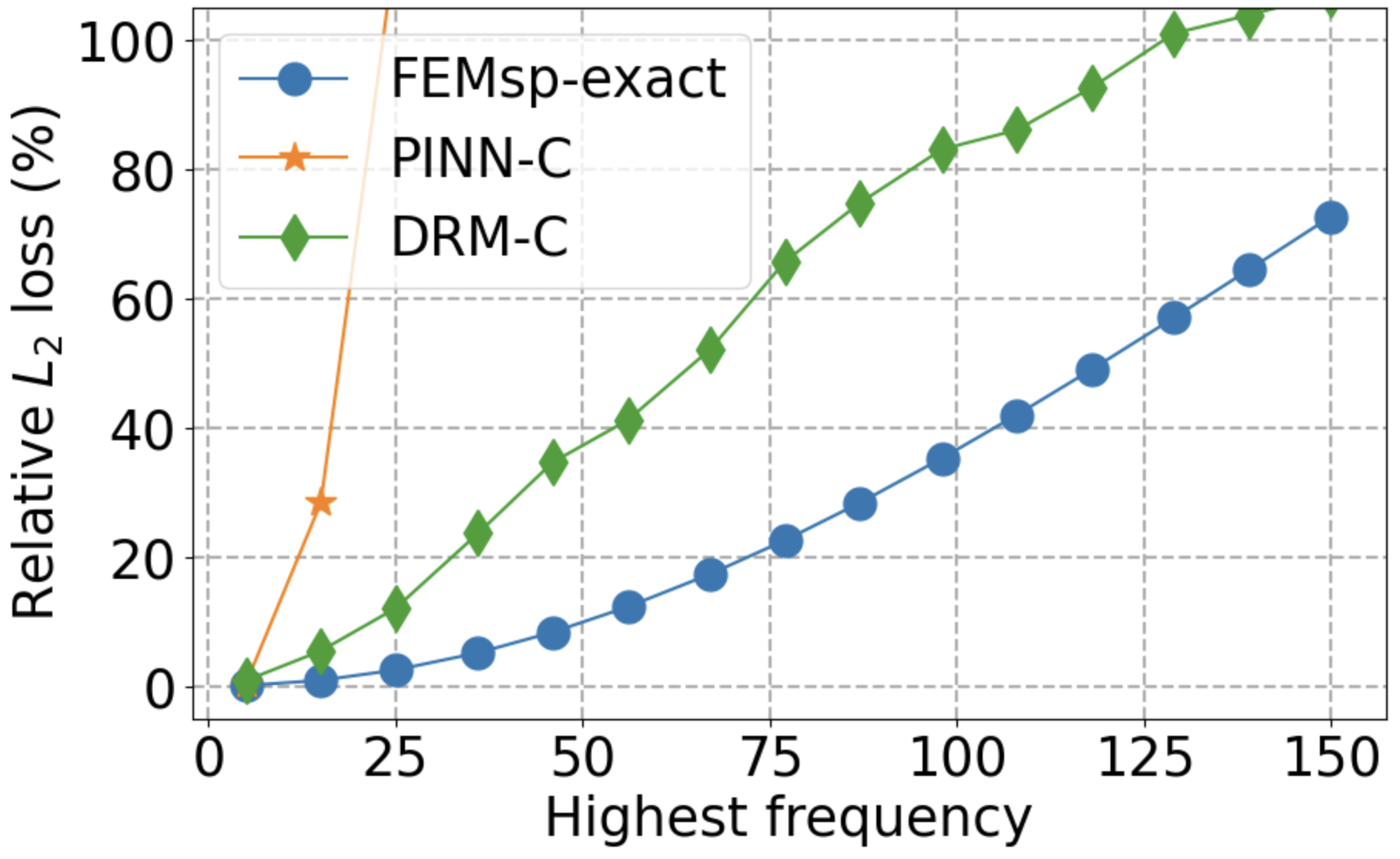}&
\includegraphics[width=0.33\textwidth]{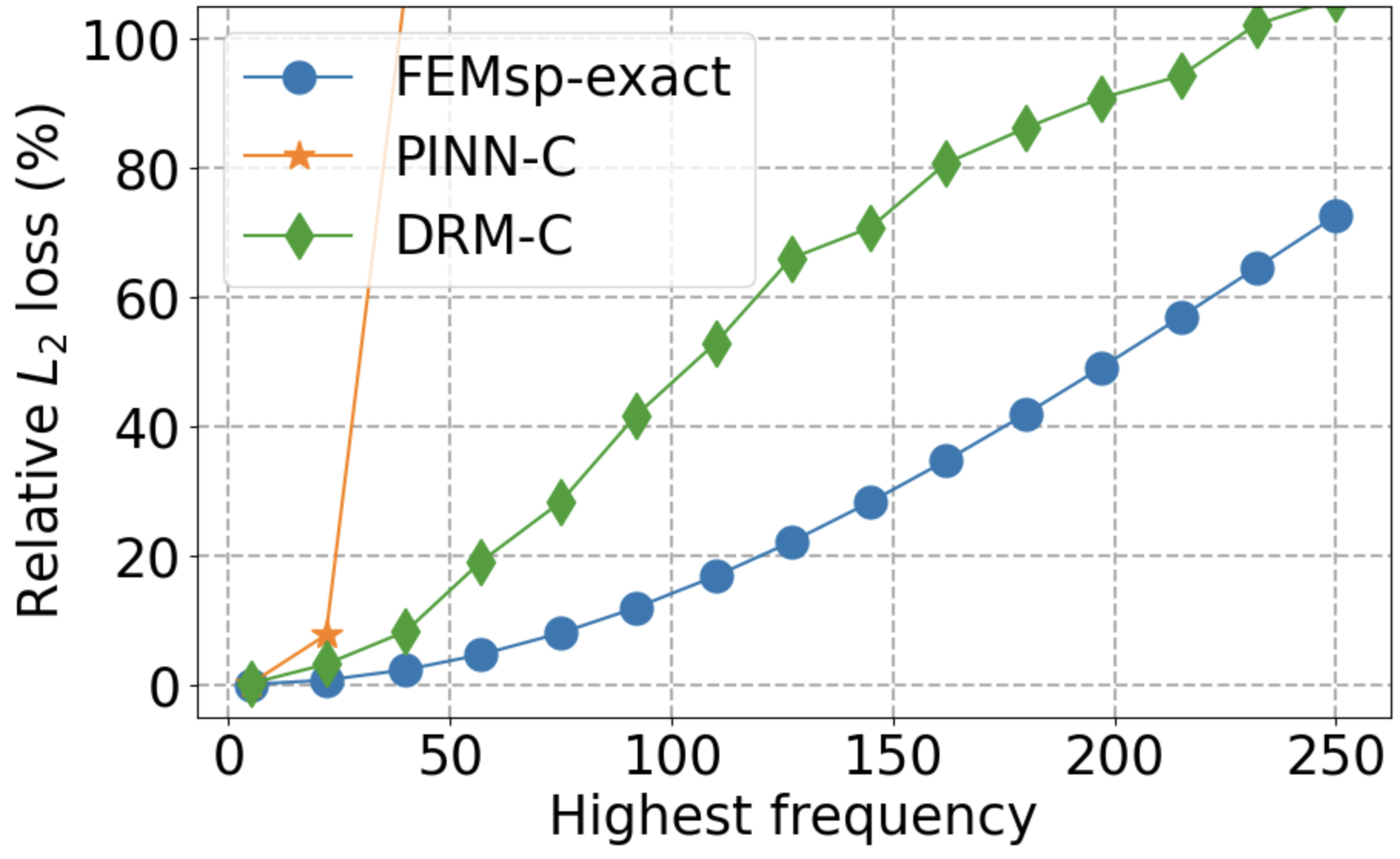}\\
(d)&(e)&(f)\\
\includegraphics[width=0.28\textwidth]{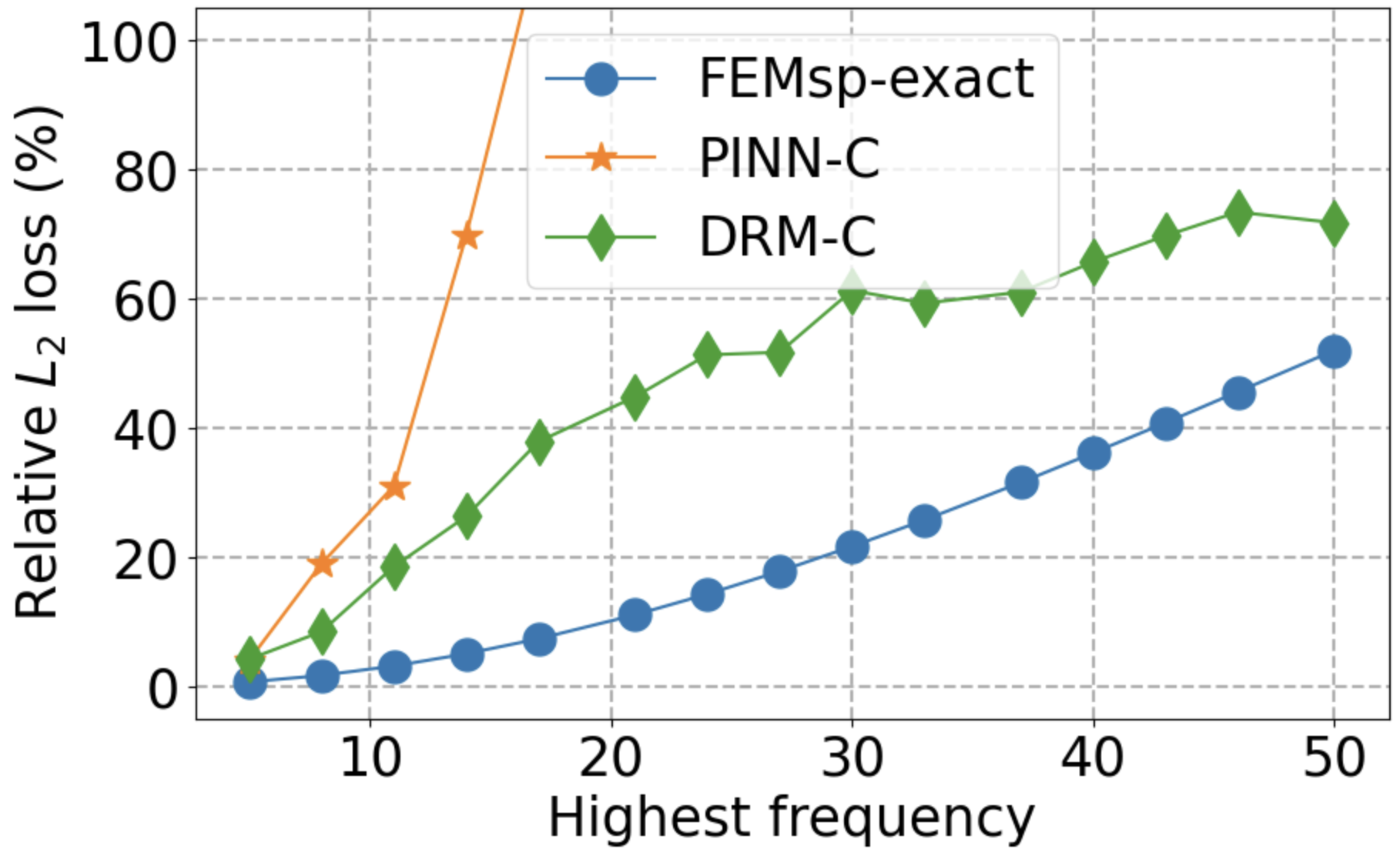}&
\includegraphics[width=0.28\textwidth]{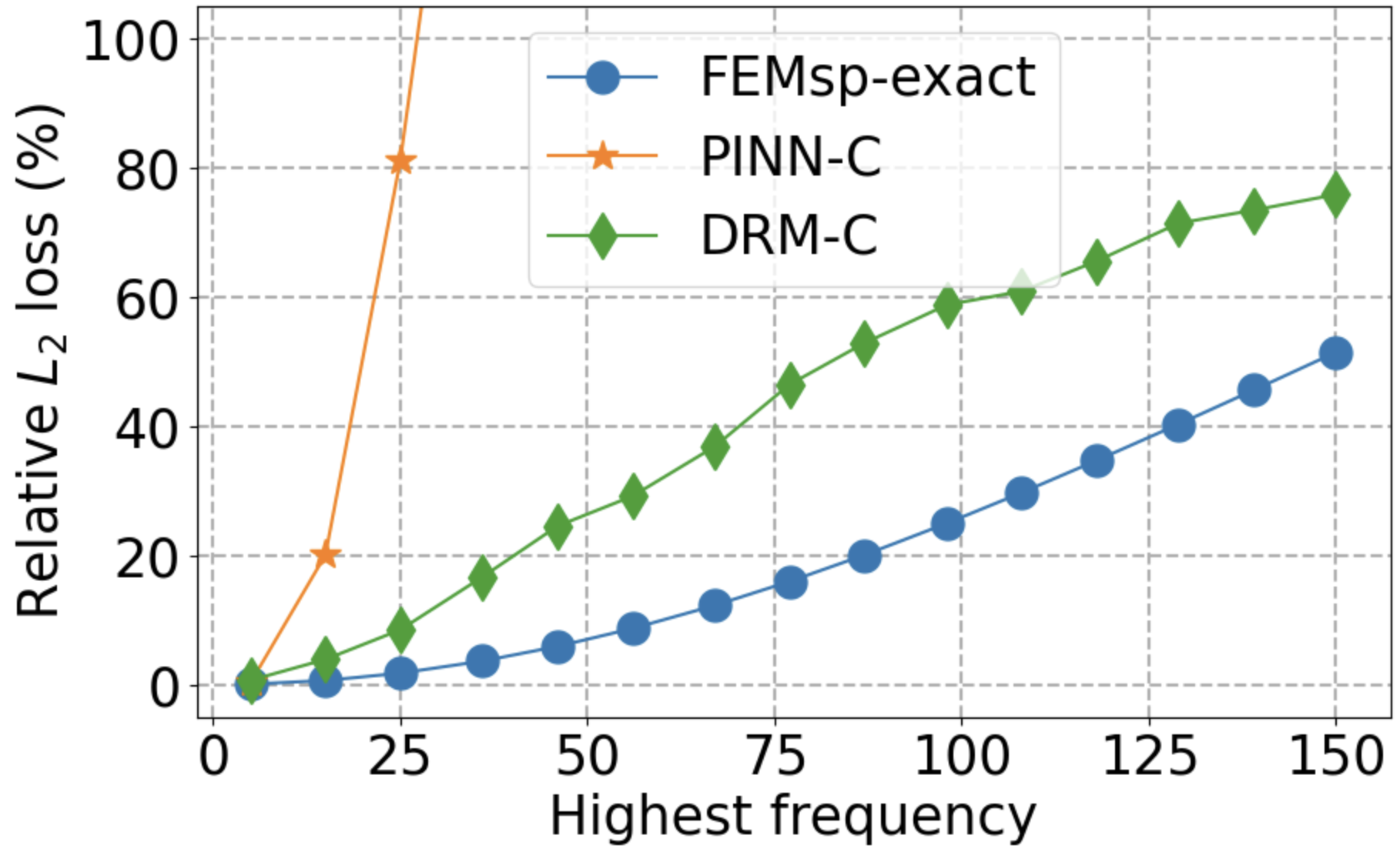}&
\includegraphics[width=0.28\textwidth]{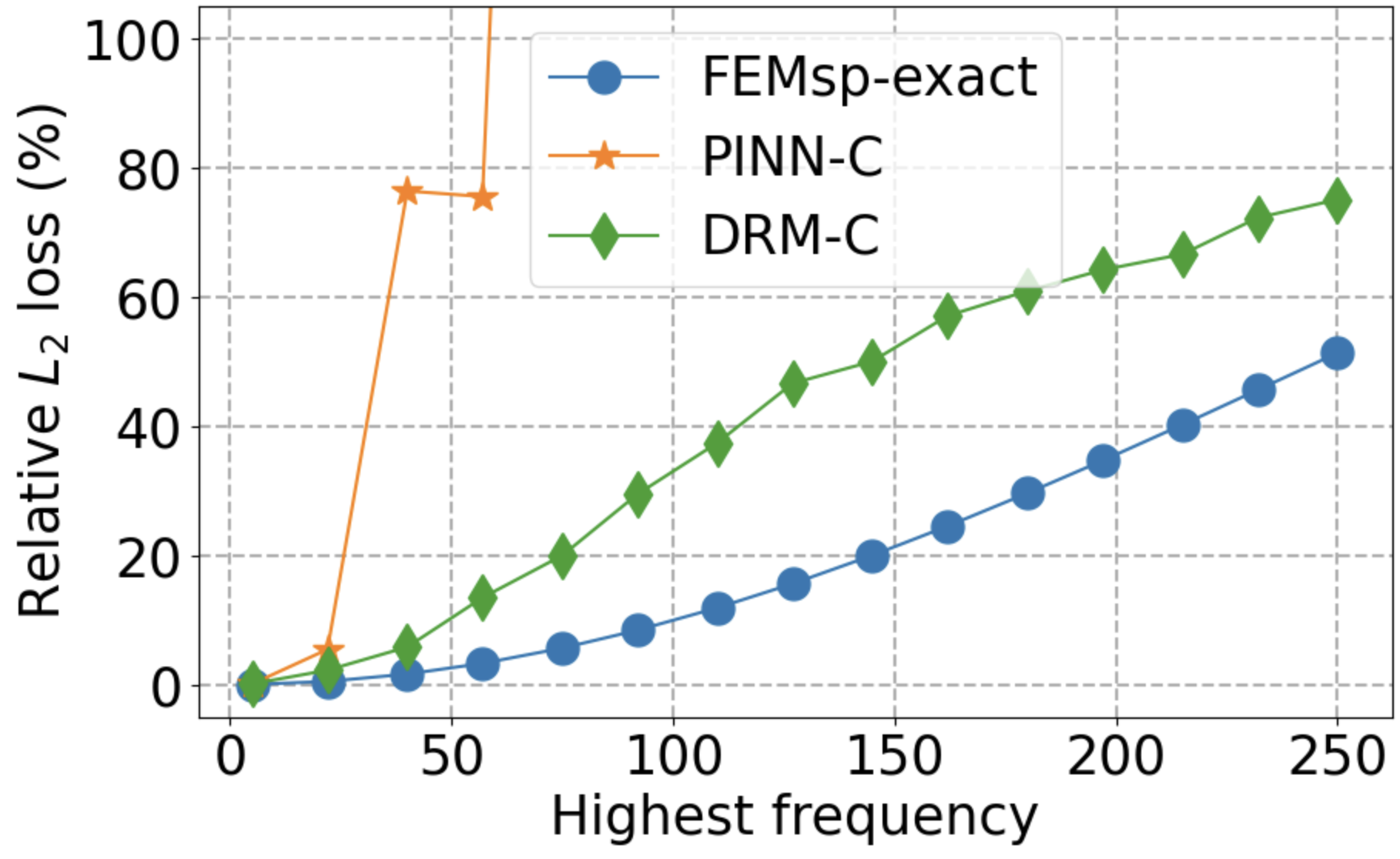}
\end{tabular}
\caption{Relative $L_2$ error of the solutions by FEMsp with linear bases, PINN with $\ReLU^2$, and DRM with $\ReLU$ and different numbers of bases (corresponding to evenly spaced grid points) $N$. For (a)-(c), the underlying solution has single Fourier mode: $\sin(k_{\max}\pi x-2\pi/3)$, while for (d)-(f), the  solution has two Fourier modes: $\sin(2\pi x+3\pi/5)+\sin(k_{\max}\pi x-2\pi/3)$ where $k_{\max}$ increases. Here, the Dirichlet boundary conditions are imposed as constraints, and a direct linear solver is used to solve the linear system.  }\label{L2_frequency_error_p1}
\end{figure}

\begin{figure}
\centering
\begin{tabular}{c@{\vspace{2pt}}c@{\vspace{2pt}}c}
\toprule
$N=100$&$N=300$&$N=500$\\\midrule
(a)&(b)&(c)\\
\includegraphics[width=0.33\textwidth]{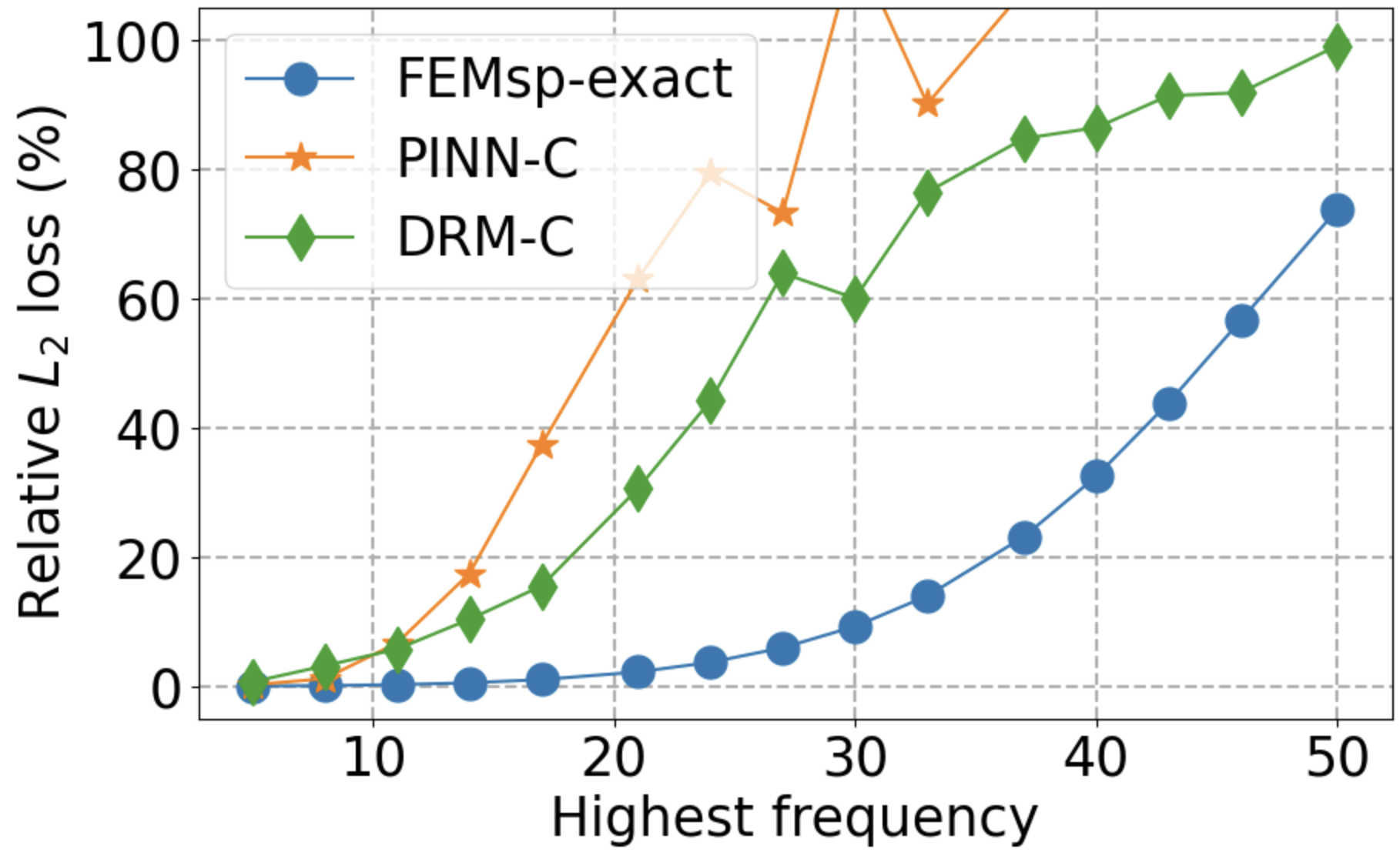}&
\includegraphics[width=0.33\textwidth]{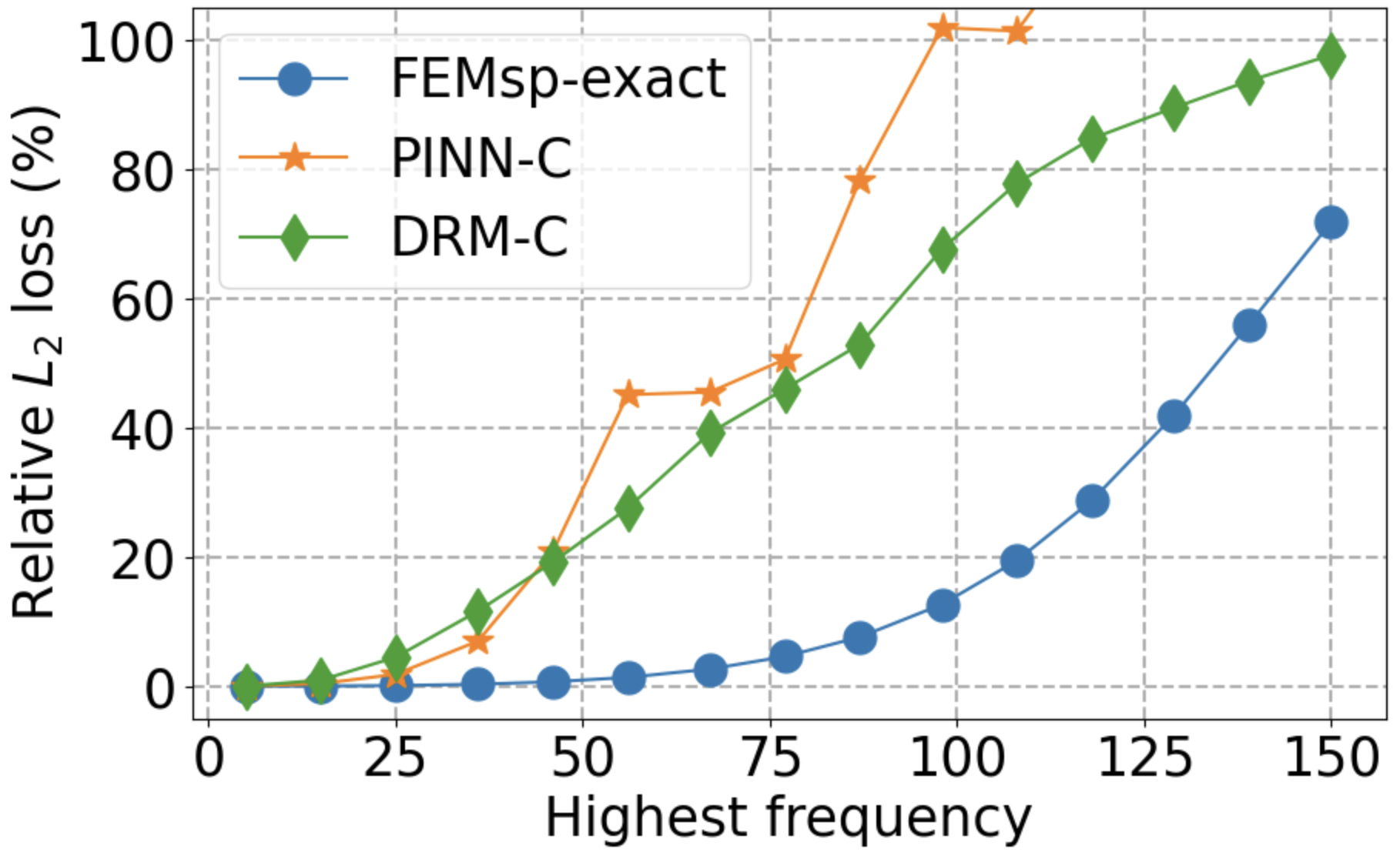}&
\includegraphics[width=0.33\textwidth]{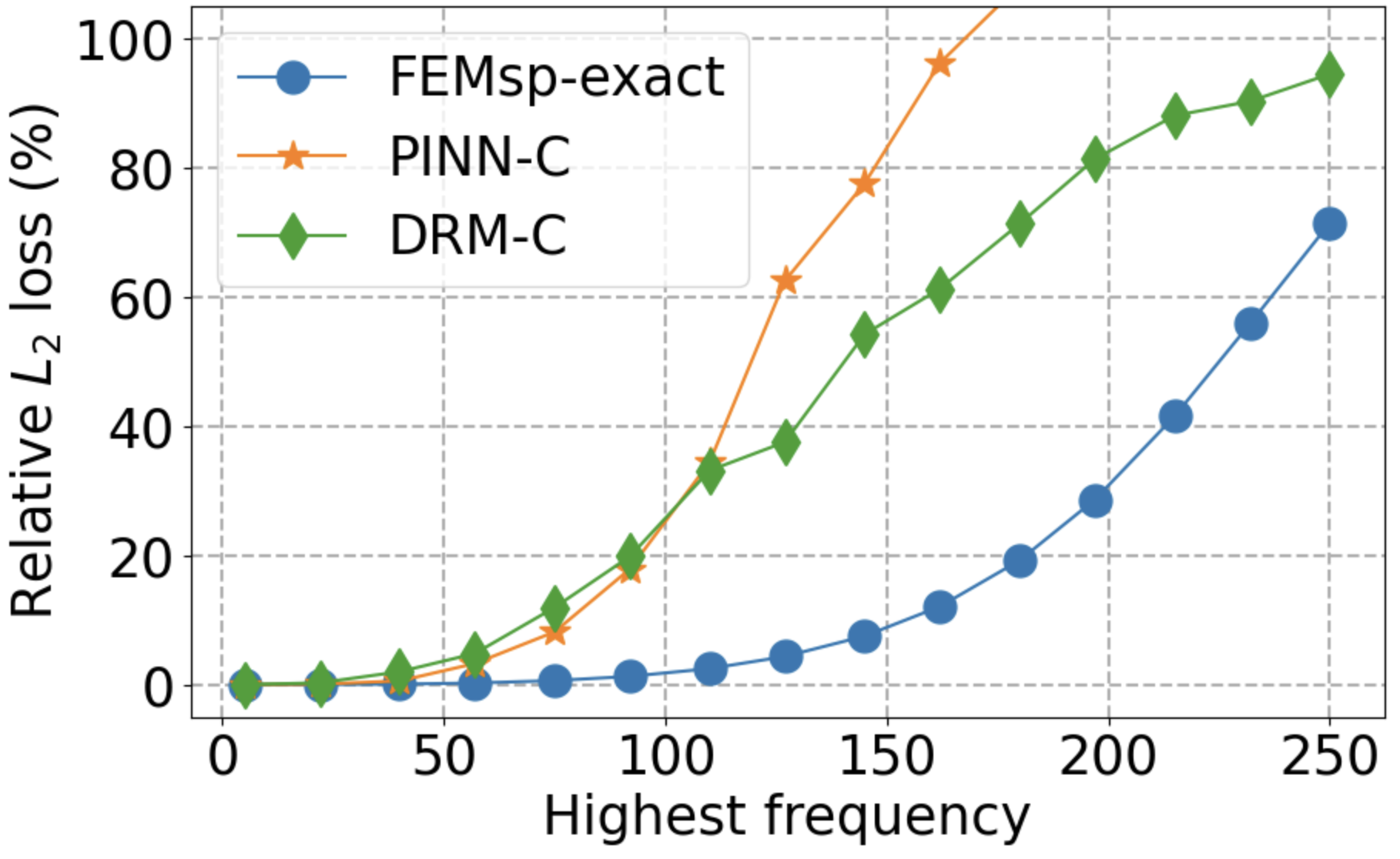}\\
(d)&(e)&(f)\\
\includegraphics[width=0.33\textwidth]{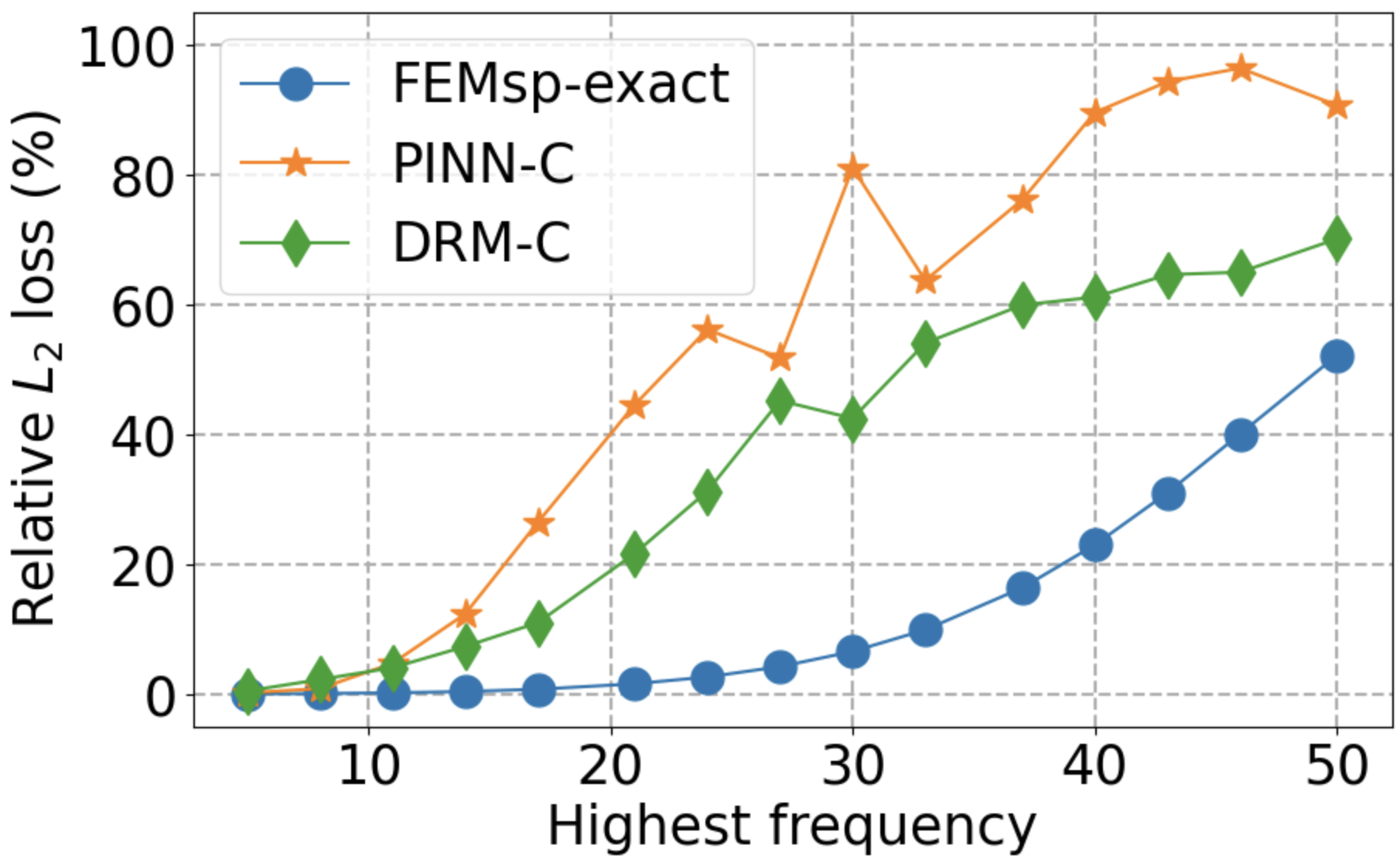}&
\includegraphics[width=0.33\textwidth]{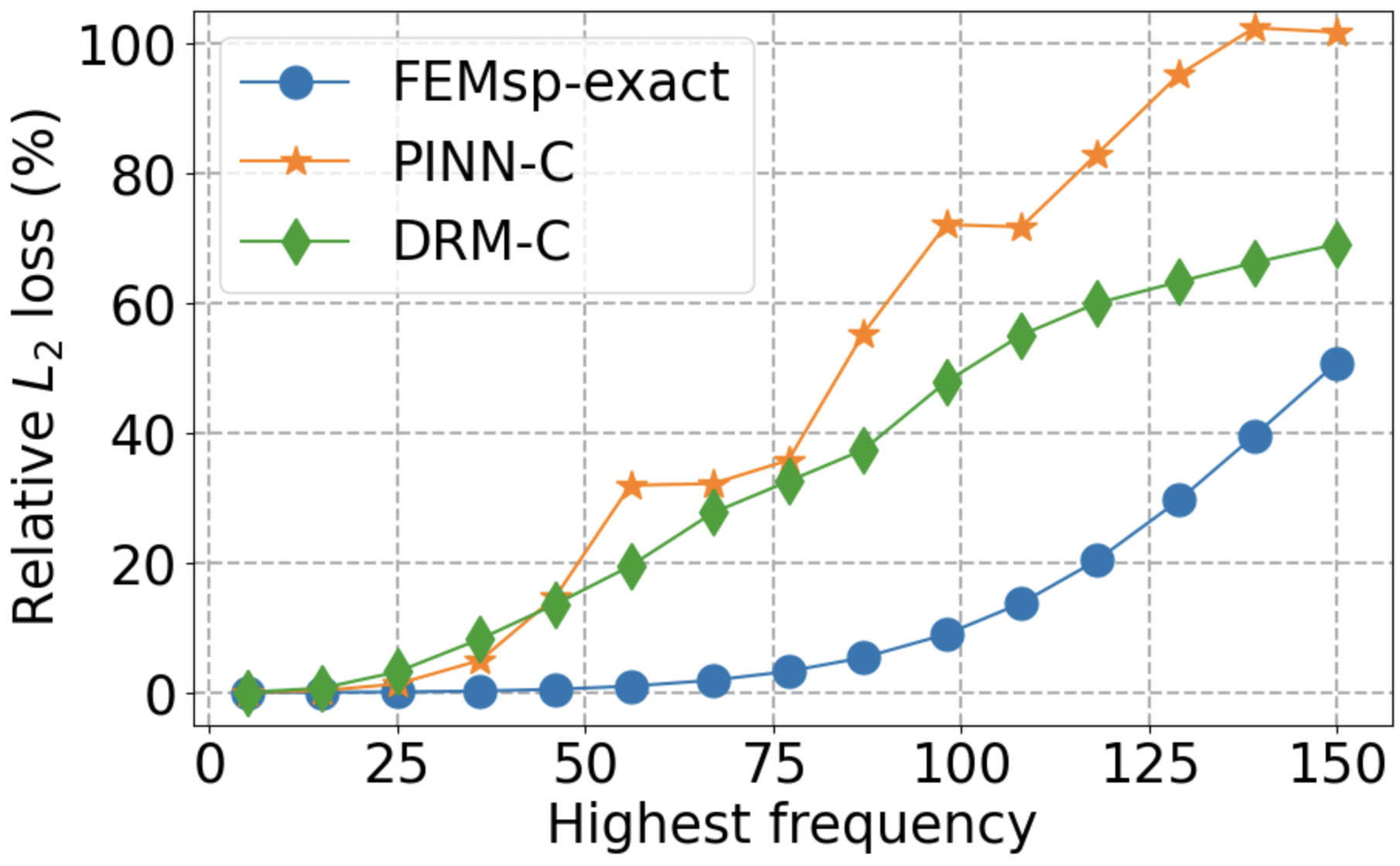}&
\includegraphics[width=0.33\textwidth]{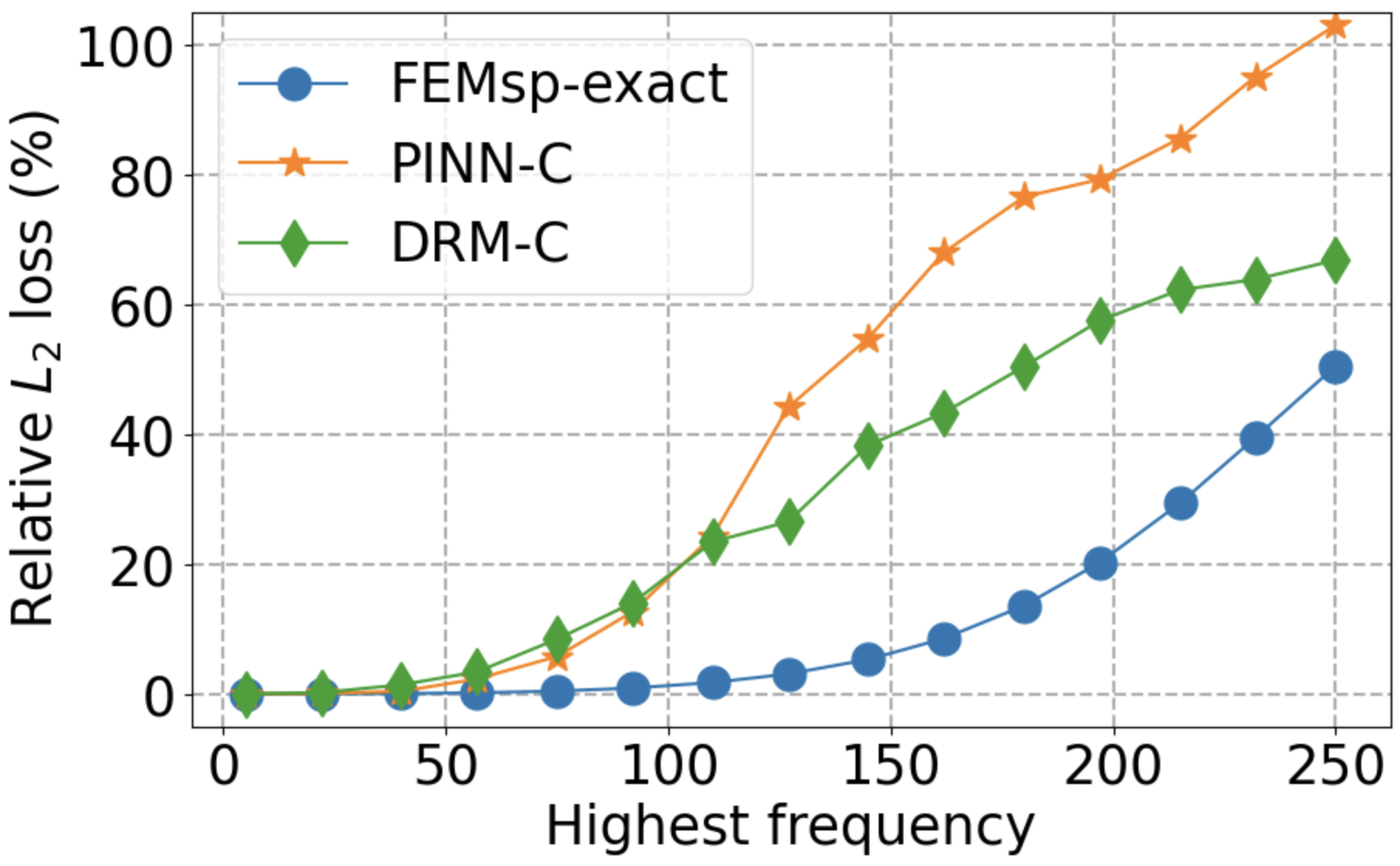}
\end{tabular}
\caption{Relative $L_2$ error of the solutions by FEMsp with quadratic bases, PINN with $\ReLU^3$, and DRM with $\ReLU^2$ and different numbers of bases (corresponding to evenly spaced grid points) $N$. The other experimental set-ups are identical with those in Figure~\ref{L2_frequency_error_p1}. }\label{L2_frequency_error_p2}
\end{figure}

Now we discuss the implications of these spectral analyses in numerical computations and verify these understandings using numerical tests. In the current setup, linear representations are used for FEMsp, PINN, and DRM. Mathematically, FEMsp and DRM are equivalent in terms of weak formulations. They both minimize the $H_1$ semi-norm of the difference between the true solution and its approximation in the space spanned by the basis functions. PINN adopts a strong formulation which minimizes the $H_2$ semi-norm of the difference between the true solution and its approximation in the space spanned by the basis functions. However, the spectral properties will reveal different frequency biases and numerical errors in real computation. Assume we are solving the one-dimensional Poisson equation on a uniform grid, i.e., $b_i$ are evenly spaced on $[-1, 1]$ with a grid size of $h$. For the FEM, if $B^p(x)$ (B splines of degree $p$) is used as the finite element space, the stiffness matrix has eigenvalues $\lambda_k=\Theta\left(k^2\right)$ for frequency $k$. Numerical inversion of the system is stable, and the numerical error is of order $(kh)^{p+1}$ when the Fourier mode $k$ is the PDE solution. For DRM using $\ReLU^{p}$, which spans the same space as $B^p(x)$, the Gram (stiffness) matrix $K_{\DRM}$ has eigenvalues $\lambda_k(\bK_\DRM)=\Theta\left(k^{-2p}\right)$ for frequency $k$ (Proposition~\ref{prop_decay_KKT}). Numerical inversion of the system will amplify the error in frequency $k$ by $k^{2p}$ by Proposition~\ref{corollary_trunc_SVD_error}. For PINN using $\ReLU^p$, the Gram matrix $K_{\PINN}$ has eigenvalues $\lambda_k(\bK_\DRM) =\Theta\left(k^{2-2p}\right)$ for frequency $k$ (Proposition~\ref{prop_decay_KKT}). Numerical inversion of the system will amplify the error in frequency $k$ by $k^{2-2p}$. 

In the following tests, the finite element method is used as the reference to verify the numerical implications stated above. In all tests, all integrals are computed exactly and solved by a direct solver for the linear system with boundary constraints. In the first test, we show the relative $L_2$ error of the numerical solution using linear basis for FEM, $\ReLU$ for DRM, and $\ReLU^2$ for PINN in Figure~\ref{L2_frequency_error_p1}. As we can see, the numerical error for FEM behaves like $\cO(k^2)$ as expected. The DRM produces increasingly significant errors as the solution contains higher and higher Fourier modes. PINN with $\ReLU^2$ does not work due to the use of a strong formulation. In the second test, we show the relative $L_2$ error of the numerical solution using quadratic B splines for FEM, $\ReLU^2$ for DRM, and $\ReLU^3$ for PINN in Figure~\ref{L2_frequency_error_p2}. As expected, the numerical error for FEM behaves like $\cO(k^3)$. The DRM produces increasingly significant errors as higher Fourier modes are introduced into the solution. It is also interesting to note the difference between PINN with $\ReLU^3$ and DRM with $\ReLU^2$ in this linear setting. Although their  Gram matrices have identical spectral decay rates, the strong formulation of PINN is equivalent to solving $-\Delta^2 u=\Delta f$.
In contrast, the weak formulation of DRM is equivalent to solving the original PDE: $-\Delta u=f$. Hence, using $\ReLU^3$ in PINN results in one order higher approximation error than using $\ReLU^2$ in DRM, as shown in the test. Also note that these results indicate that numerical errors are relatively stable with respect to the dimensionless number $kh$ (or $k_{\max}/N$), as is typically the case in classical numerical analysis. 

\begin{figure}
\centering
\begin{tabular}{c@{\vspace{2pt}}c@{\vspace{2pt}}c}
(a)&(b)&(c)\\
\includegraphics[width=0.33\textwidth]{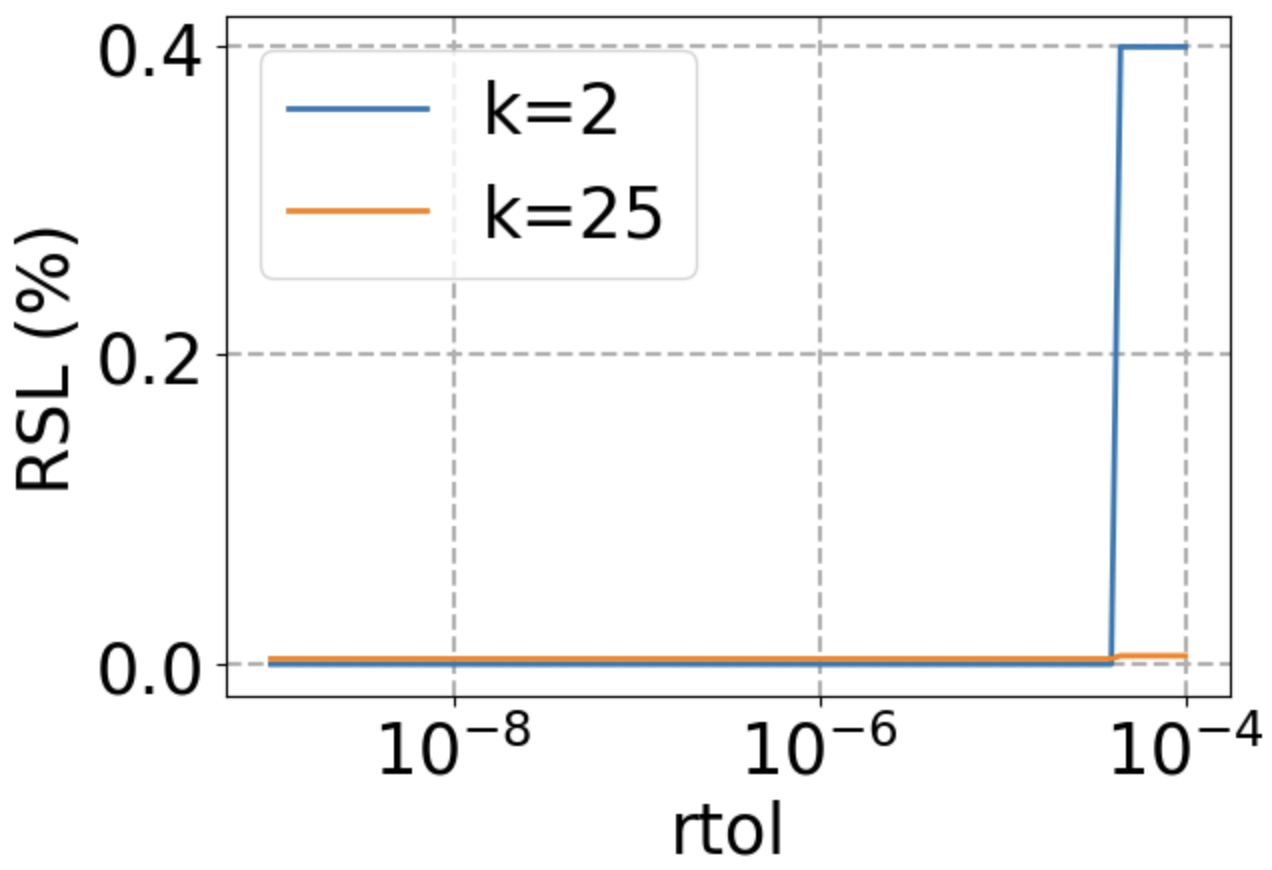}&
\includegraphics[width=0.33\textwidth]{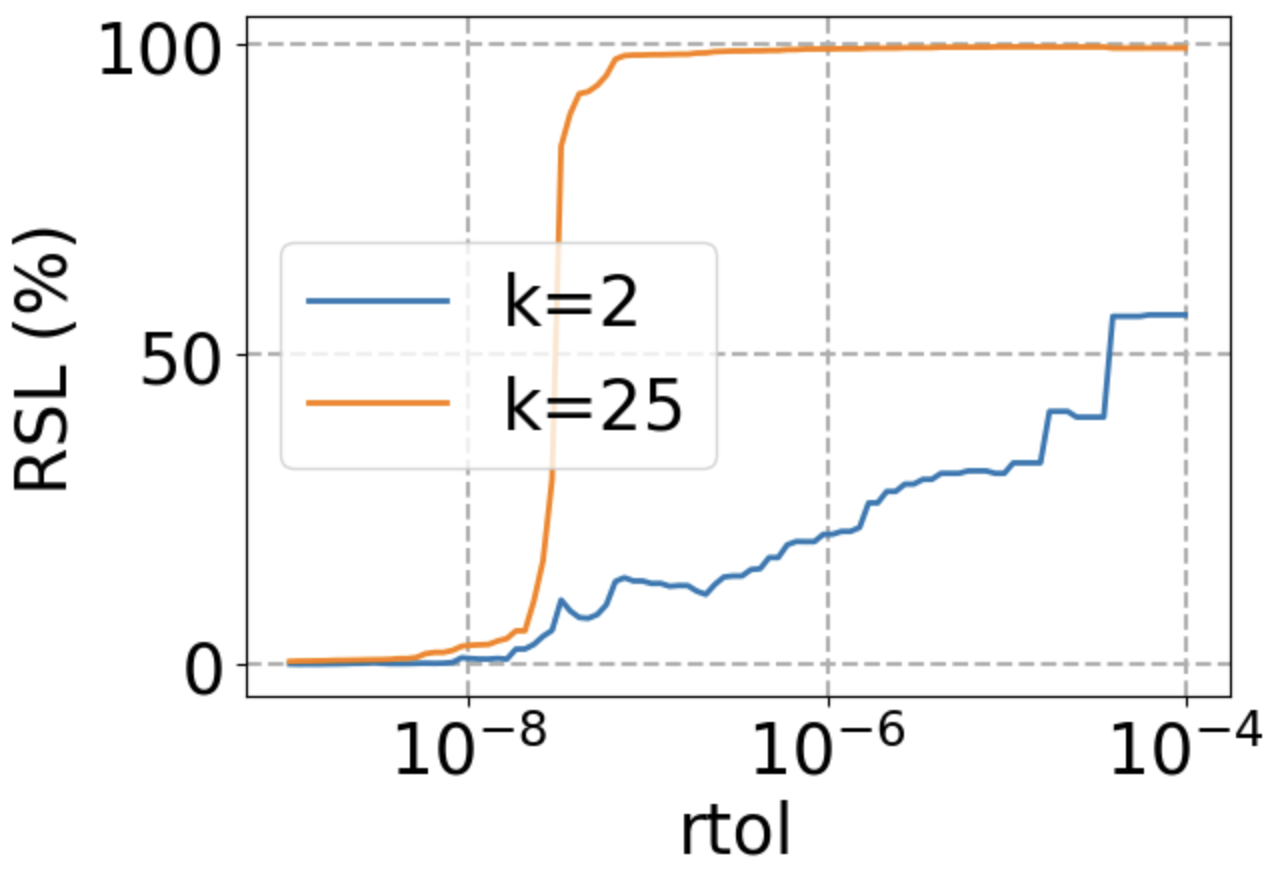}&
\includegraphics[width=0.33\textwidth]{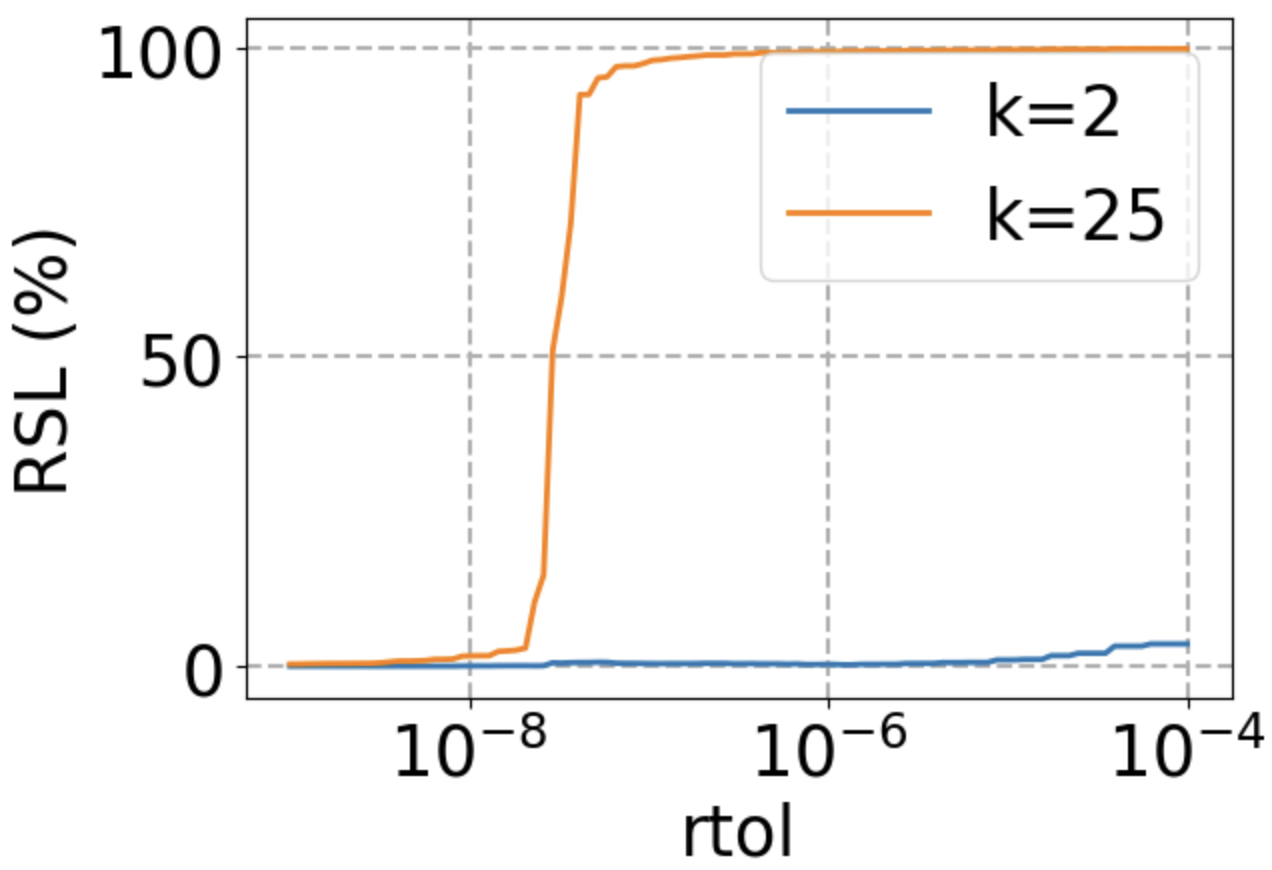}
\end{tabular}
\caption{Effects of truncation on the singular values in the inversion of the Gram matrices. (a) FEMsp, (b) PINN, (c) DRM. The underlying solution is $\sin(2\pi x+4\pi/5)+\sin(25\pi x-3/4\pi)$. Here, all methods are computed using exact integration, and the boundary condition is enforced as constraints. The test data contains $3000$ points, evenly sampled from $[-1, 1]$. The bases for FEMsp are quadratic, the activation function for PINN is $\ReLU^3$, and that for DR is $\ReLU^2$. }\label{fig_trunc_effects}
\end{figure}

Next, we use another experiment, which involves truncating small singular values, to demonstrate the frequency bias and its consequences for the three different methods. We illustrate the introduction of approximation errors for specific frequency components as the truncation threshold is varied. These components correspond to the eigenvectors associated with the attenuated portion of the spectrum.  For PINN and DRM, Proposition~\ref{prop_decaying} indicates that their eigenvalue structures (with power ReLU activations) are biased against high frequencies. Specifically, eigenvectors linked to large eigenvalues represent low frequencies, while those related to small eigenvalues represent high frequencies.
In contrast, the eigenvalue structure of the FEM is biased against low frequencies. As a result, we anticipate that progressively truncating the singular values of $\bK_F$ from small to large will initially cause substantial errors in the approximation of low-frequency components for FEM. For both PINN and DRM, this truncation will initially lead to significant errors in approximating high-frequency components instead.

To validate the above statement, in Figure~\ref{fig_trunc_effects}, we show the relative spectral loss (RSL) defined by
\begin{equation}\label{eq_RSL}
\text{RSL}(f,\zeta) := \frac{|\widehat{f}(\zeta) - \widehat{u^*}(\zeta)|}{|\widehat{u^*}(\zeta)|}\times 100 \%
\end{equation}
for approximations using FEM with quadratic bases, PINN with $\ReLU^3$, and DRM with $\ReLU^2$ as the relative tolerance for the singular values increases; in~\eqref{eq_RSL}, $f$ is the approximate solution, $u^*=\sin(2\pi x+4\pi/5)+\sin(25\pi x-3\pi/4)$  is the exact solution, $\widehat{\cdot}$ denotes the Fourier transform, and $\zeta\in\mathbb{R}$ is the examined frequency. For all methods, the number of bases is $N=400$. For both PINN and DRM, the fixed weight parameters $\bw$ are independently sampled from $\{-1,1\}$ with equal probability, and the biases $\bb$ are independently sampled from the uniform distribution $\cU(-1,1)$.  We observe that for FEM, the approximation errors remain low across a wide range of relative threshold values. As the threshold increases further, the error for the low-frequency component increases first while the error for the high-frequency component remains almost unchanged. In contrast, for both PINN and DRM,  the error for the high-frequency approximation becomes prominent first, followed by the error for the low-frequency component. They confirm our statement.

\subsection{Accuracy of direct solvers with boundary regularization}\label{sec_accuracy_direct_bd_regularization}

\begin{figure}
\centering
\begin{tabular}{c@{\vspace{2pt}}c@{\vspace{2pt}}c}
(a)&(b)&(c)\\
\includegraphics[width=0.33\textwidth]{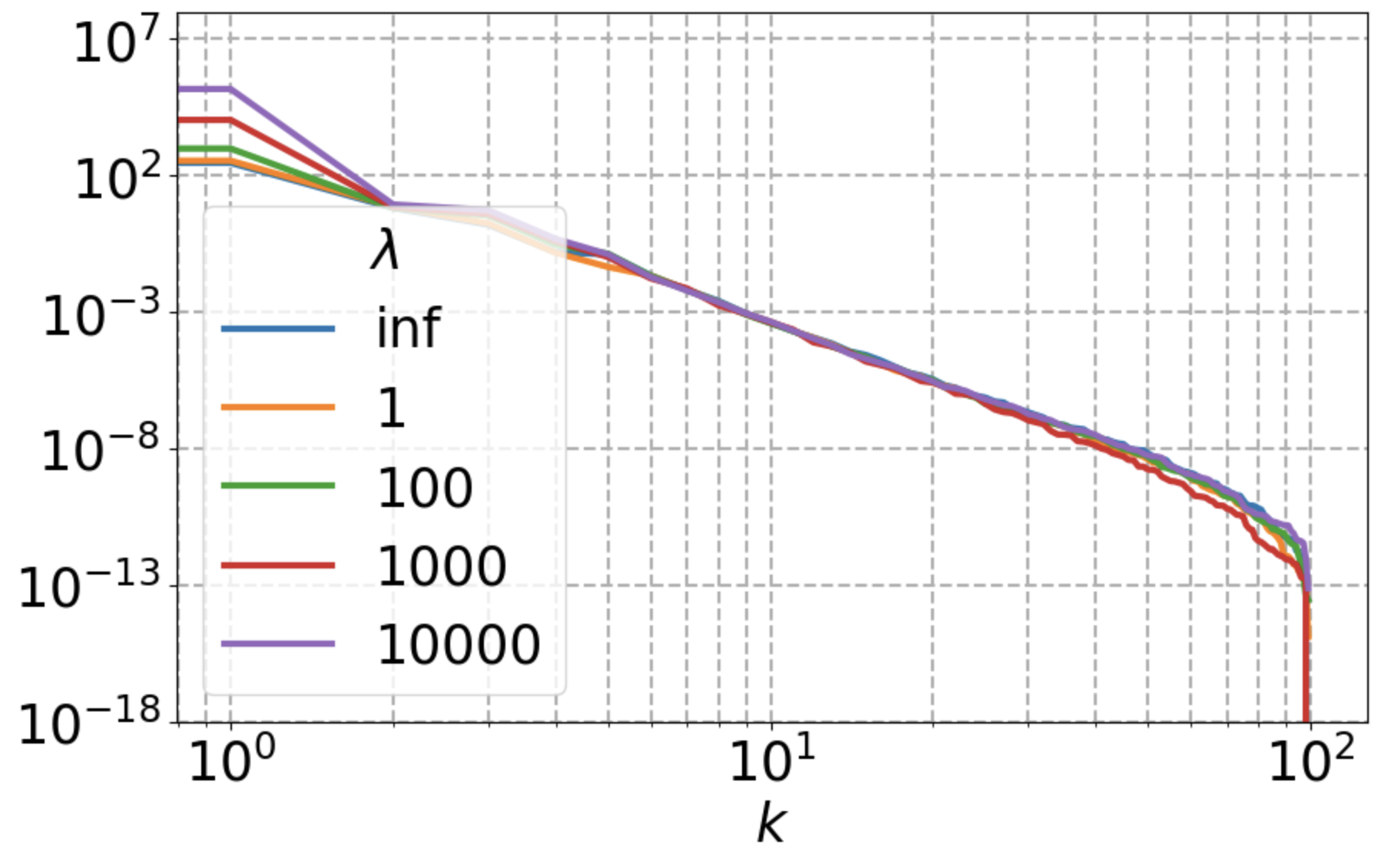}&
\includegraphics[width=0.33\textwidth]{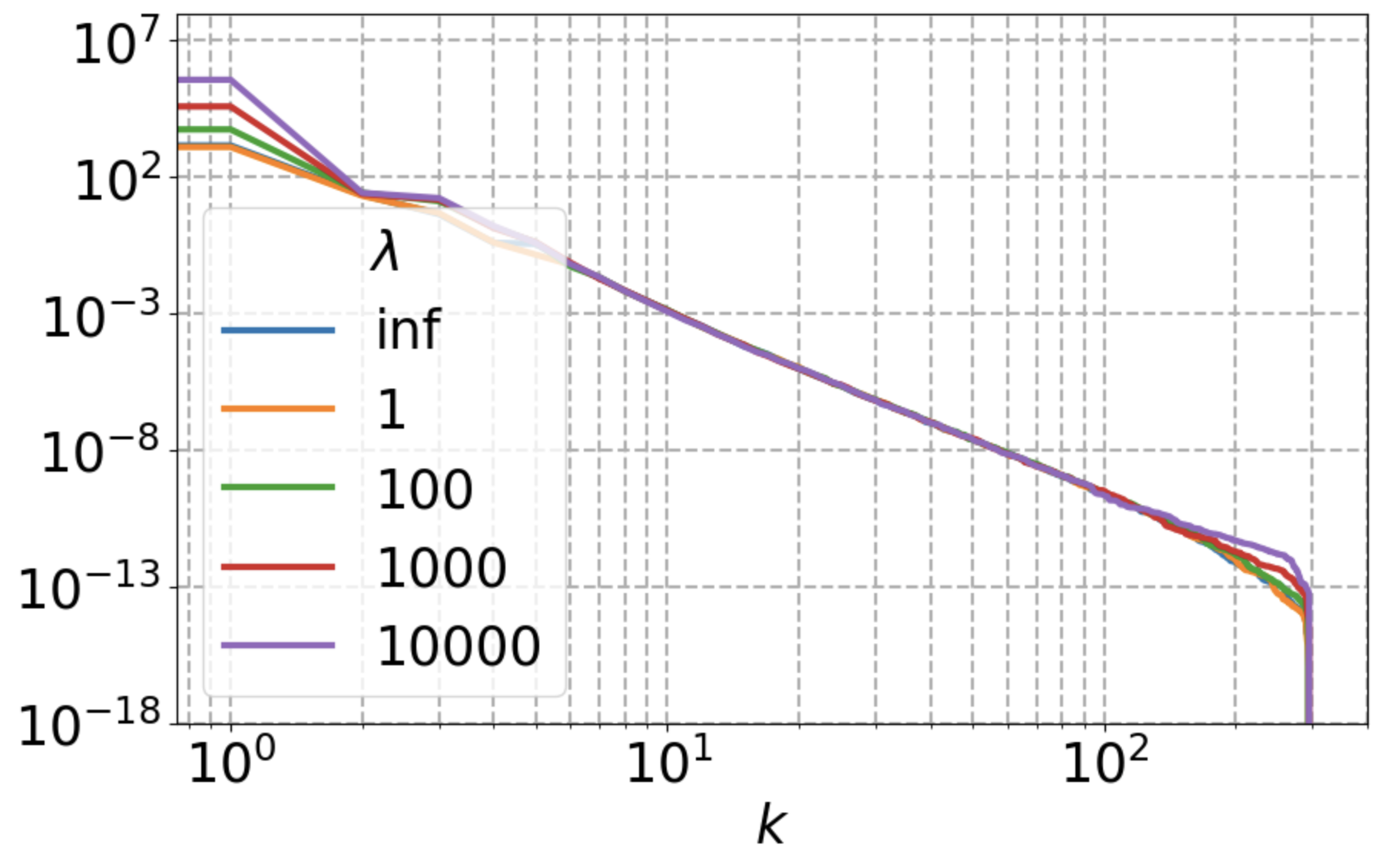}&
\includegraphics[width=0.33\textwidth]{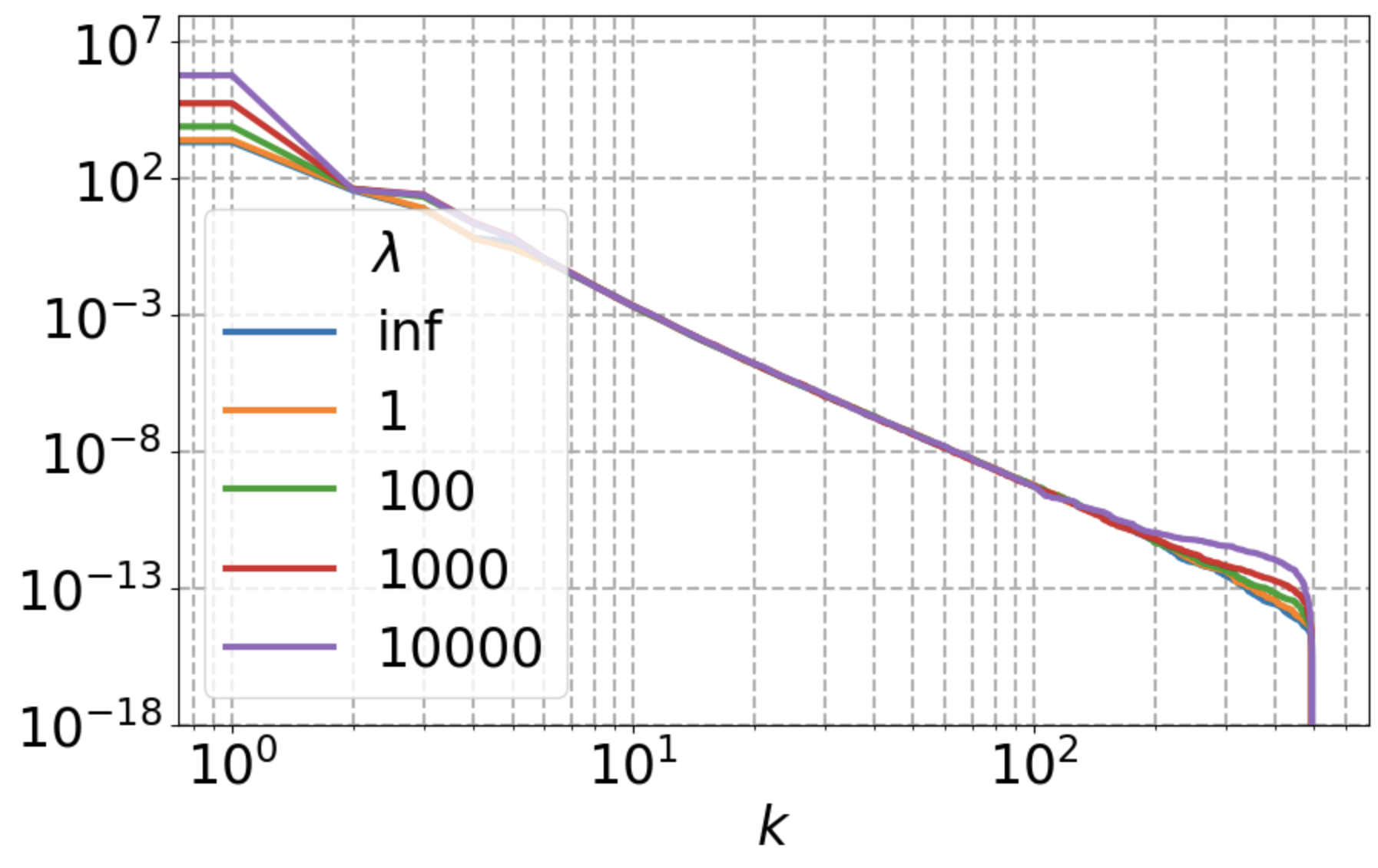}
\end{tabular}
\caption{Spectrum of $\bR_F$ in~\eqref{eq_optimality_regularization} with $\ReLU^2$, (a) $N=100$, (b) $N=300$, (c) $N=500$ and with varying values of regularization parameter $\lambda$. $\lambda = $\texttt{inf} corresponds to the case when the boundary condition is imposed as a constraint. The biases are uniform samples from $(-1,1)$ and weights are either $1$ or $-1$ with equal probability.}\label{R_F_spectrum}
\end{figure}

In practice, exact boundary conditions can be difficult to enforce when NNs are used, especially when the boundary geometry is complex. In the following, we study the solutions obtained by solving~\eqref{eq_regularization_quad}, which corresponds to imposing the boundary conditions as regularization. The optimality condition for the solution $\ba_F^*$ in this case is
\begin{equation}\label{eq_optimality_regularization}
\bR_F\ba_F^*:=(\bG_F + \lambda\bB^\top\bB)\ba^*_F=\by_F +\lambda\bB^\top\bc\;.
\end{equation}
The matrix $\bR_F$ is positive definite, and by Weyl's inequality, we have
\begin{equation}
\lambda_i(\bG_F) + \lambda\cdot\lambda_{\min}(\bB^\top\bB) \leq \lambda_i(\bR_F)\leq \lambda_i(\bG_F) + \lambda\cdot\lambda_{\max}(\bB^\top\bB)
\end{equation}
where $ \lambda_{\max}(\bB^\top\bB)$ and $ \lambda_{\min}(\bB^\top\bB)$  are the maximum and minimum eigenvalues of $\bB^\top\bB$, respectively, and $\lambda_i(\bG_F)$ is the $i$-th eigenvalue of $\bG_F$ in the descending order.  
This  shows that the spectrum of $\bR_F$ decays similarly as $\bG_F$, which suggests that regularized PINN and DRM have the same asymptotic frequency bias as their respective counterparts with constrained boundary conditions. However,  we note that this boundary regularization introduces additional subtleties in balancing the boundary error and interior error to achieve overall accuracy.

In Figure~\ref{R_F_spectrum}, we plot the spectrum of $\bR_F$ with $\ReLU^2$ activation, varying width $N$, and different values of the regularization parameter $\lambda$. We also include the case $\lambda=\infty$, which corresponds to $\bK_F$ arising from imposing the boundary condition as a constraint. The bases are randomly sampled from a uniform distribution in $(-1,1)$ and each weight takes either $1$ or $-1$ with equal probability. We observe that the decaying behaviors of $\bG_F$ are similar to those of $\bR_F$, suggesting that solving for $\ba_F^*$ directly from~\eqref{eq_optimality_regularization} can incur similar numerical issues as discussed in Section~\ref{sec_constraint_accuracy}. We also observe that as the regularization parameter $\lambda$ increases, the largest eigenvalue of $\bR_F$ grows, and the tails of the spectrum only lift slightly. This suggests that imposing stronger regularization with a larger $\lambda$ leads to a more ill-conditioned $\bR_F$, posing a numerical challenge in balancing the interior and boundary errors, which ultimately determine the overall accuracy. The stiffness of~\eqref{eq_optimality_regularization} can become more severe when using wider networks.

\begin{figure}
\centering
\begin{tabular}{c@{\vspace{2pt}}c@{\vspace{2pt}}c}
(a)&(b)&(c)\\
\includegraphics[width=0.33\textwidth]{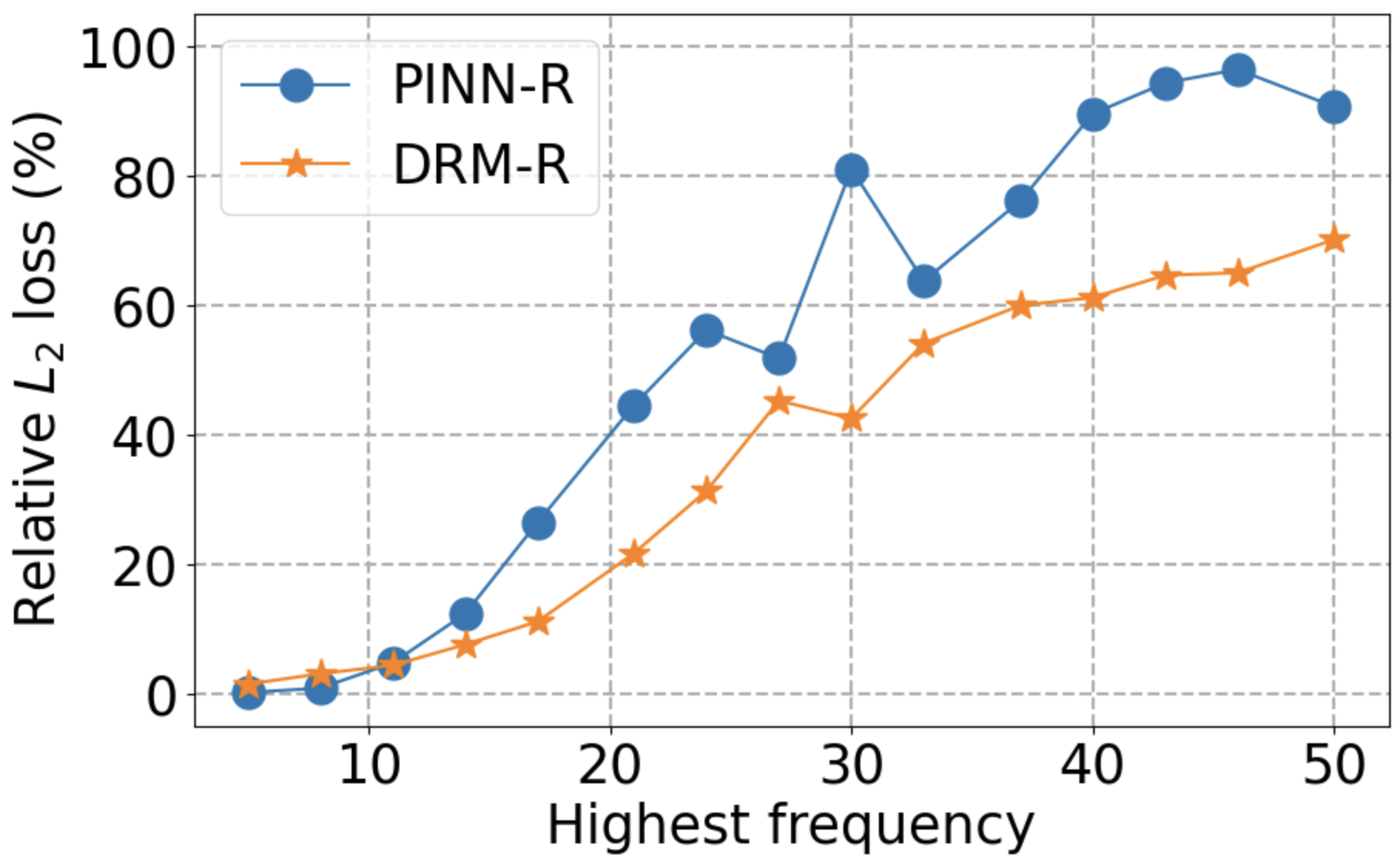}&
\includegraphics[width=0.33\textwidth]{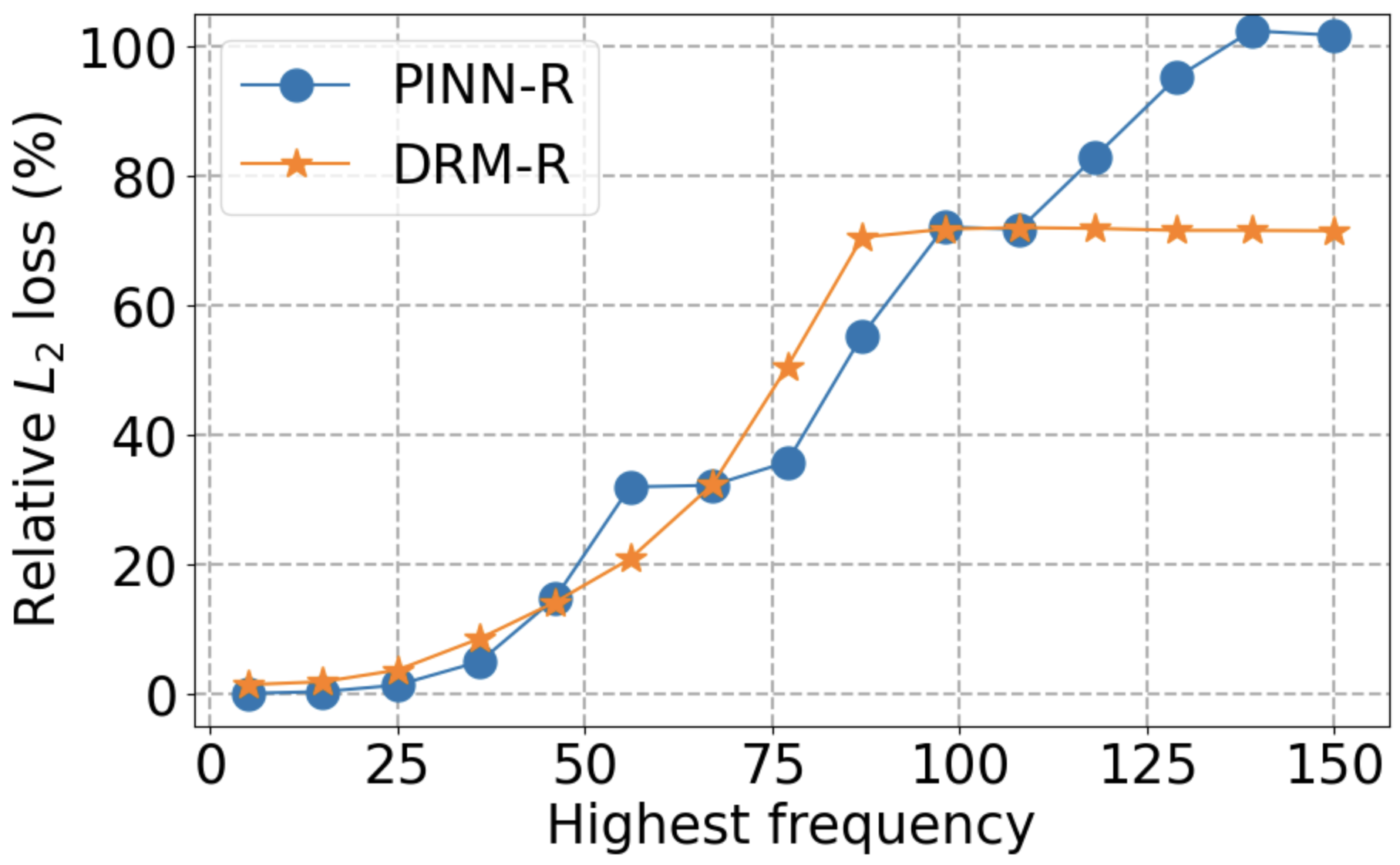}&
\includegraphics[width=0.33\textwidth]{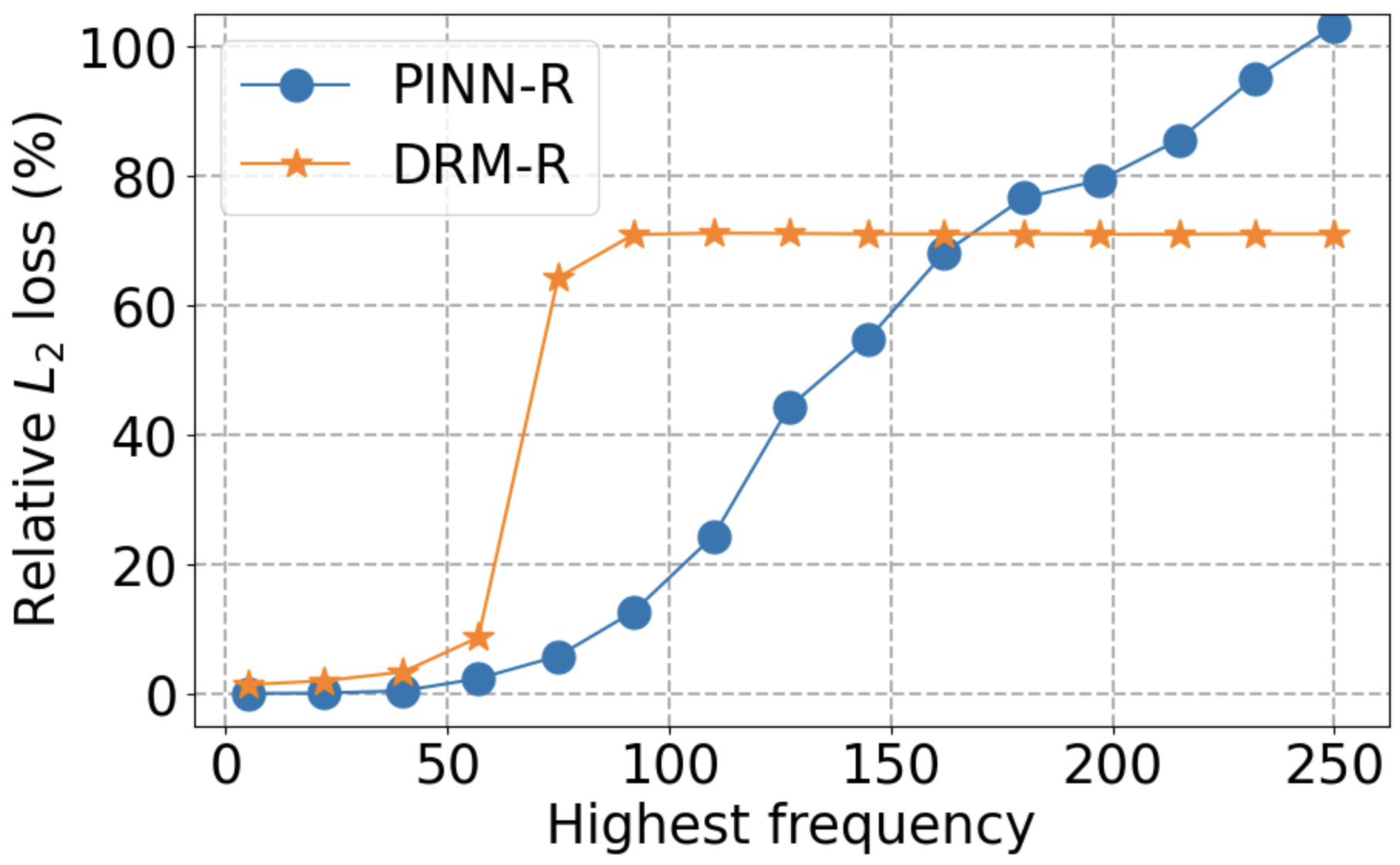}
\end{tabular}
\caption{Relative $L_2$ error of the solutions by PINN and DR when the number of bases is (a) $N=100$, (b) $N=300$, (c) $N=500$, as the highest frequency $k_{\max}$ of the solution $\sin(2\pi x+3\pi/5)+\sin(k_{\max}\pi x-2\pi/3)$ increases. The Dirichlet boundary conditions are imposed as regularization. For PINN, we use $\ReLU^3$, $\lambda = 0.1k_{\max}$,  and for DRM, we use $\ReLU^2$, $\lambda = 50k_{\max}$.}\label{acc_PINN_DRM_reg}
\end{figure}

In Figure~\ref{acc_PINN_DRM_reg}, we show the relative $L_2$ error of the solutions by PINN and DRM where the boundary condition is imposed as regularization. The network width is $N=100$ in (a), $N=300$ in (b), and $N=500$ in (c), and the solution is $\sin(2\pi x + 3\pi/5) + \sin(k_{\max}\pi x -2\pi /3)$ where $k_{\max}$ is the highest frequency varying from $5$ to $N/2$. For PINN, we use $\ReLU^3$ as the activation function and set $\lambda=0.1k_{\max}$; and for DRM, we use $\ReLU^2$ as the activation function and set  $\lambda = 100k_{\max}$.  In (a)-(c), we observe that both PINN and DRM perform similarly when $k_{\max}$ is small, and as $N$ increases, the accuracy of both methods generally improves. 
In addition to errors caused by inexact boundary conditions, as the highest frequency $k_{\max}$ increases, the approximation accuracy deteriorates; this behavior is consistent with that observed in previous tests when the boundary condition is enforced as a constraint.

We further examine the effect of the regularization parameter $\lambda$ on both PINN and DRM and compare them to the results with boundary conditions enforced as constraints. For PINN, we use $\ReLU^p$ activation functions, while for DRM, we use $\ReLU^{p-1}$ so that their corresponding Gram matrices have the same asymptotic spectral decay rate. In both methods, the two-layer networks contain $N=300$ neurons, and the underlying solution is fixed as $\sin(2\pi x + 3\pi/5) + \sin(25\pi x -2\pi /3)$. We measure the relative $L_2$ error of the solution approximation for different values of $\lambda$ and $p$, reporting the results in Figure~\ref{lamda_effect_PINN_DRM}.

For $p=2$, we observe that PINN basically failed across all regularization parameters. This is because PINN uses a strong formulation while $\ReLU^2$ does not have a continuous second derivative.
While DRM using $\ReLU$ employs a weak formulation. As we can see, when  $\lambda$ increases, the boundary condition is enforced more accurately, eventually leading to a numerical result consistent with that of treating the boundary condition as a constraint. 

When we increase the regularity to $p=3$, we observe that PINN performs satisfactorily over a wide range of the regularization parameter $\lambda$, whereas DRM is only effective within a specific range. We also see that the regularization controlled by $\lambda$ introduces a bias. As shown in Figure~\ref{R_F_spectrum},  as $\lambda$ becomes large, the first two leading singular values increase drastically. Moreover, the use of a smoother of $\ReLU$ power activation makes the spectral decay faster ($\Theta\left(k^{-4}\right)$). The combination of these two factors makes the numerical results more sensitive to the choice of $\lambda$. The above phenomena are corroborated when taking $p=4$ as shown in Figure~\ref{lamda_effect_PINN_DRM}(c).  In this case, the spectral decay of the Gram matrix is even faster ($\Theta\left(k^{-6}\right)$), which makes the dependence on $\lambda$ even more sensitive. 

In these tests, one also observes that DRM is more sensitive to the regularization constant for boundary conditions than PINN. This minimization problem~\eqref{eq_DRM_in_loss} is equivalent to the weak formulation of the original PDE when optimizing among all functions in $H_1[-1,1]$ that satisfy the Dirichlet boundary condition. In classical FEM, the boundary condition is explicitly enforced. However, in DRM using boundary regularization, the boundary condition is part of the optimization. In other words, it is not guaranteed that one is minimizing the loss function among all NN representations that satisfy the Dirichlet boundary condition. 

Due to this fact, one may be inclined to increase $\lambda$ to emphasize the boundary condition strongly. However, as discussed above, the strong bias introduced by the boundary condition and the ill-conditioning of the system may lead to increasing numerical errors. The numerical results in Figure~\ref{lamda_effect_PINN_DRM}(c) indicate that the DRM requires a sufficiently large $\lambda$ to enforce the boundary conditions for accurate results. However, an excessively large $\lambda$ can also introduce significant error, particularly in more ill-conditioned systems. This makes DRM more sensitive to the choice of $\lambda$ compared with PINN.

\begin{figure}
\centering
\begin{tabular}{c@{\vspace{2pt}}c@{\vspace{2pt}}c}
(a)&(b)&(c)\\
\includegraphics[width=0.33\textwidth]{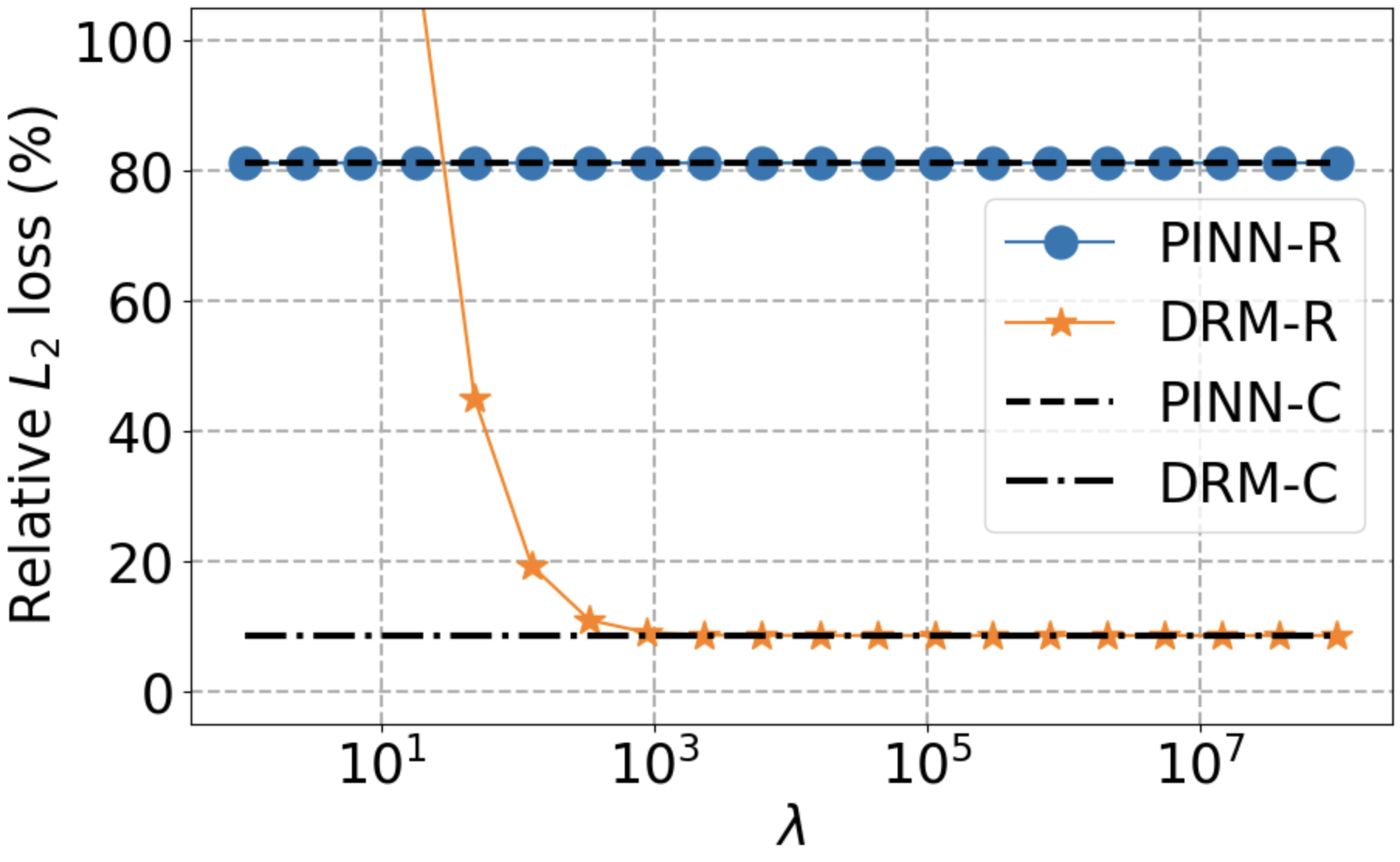}&
\includegraphics[width=0.33\textwidth]{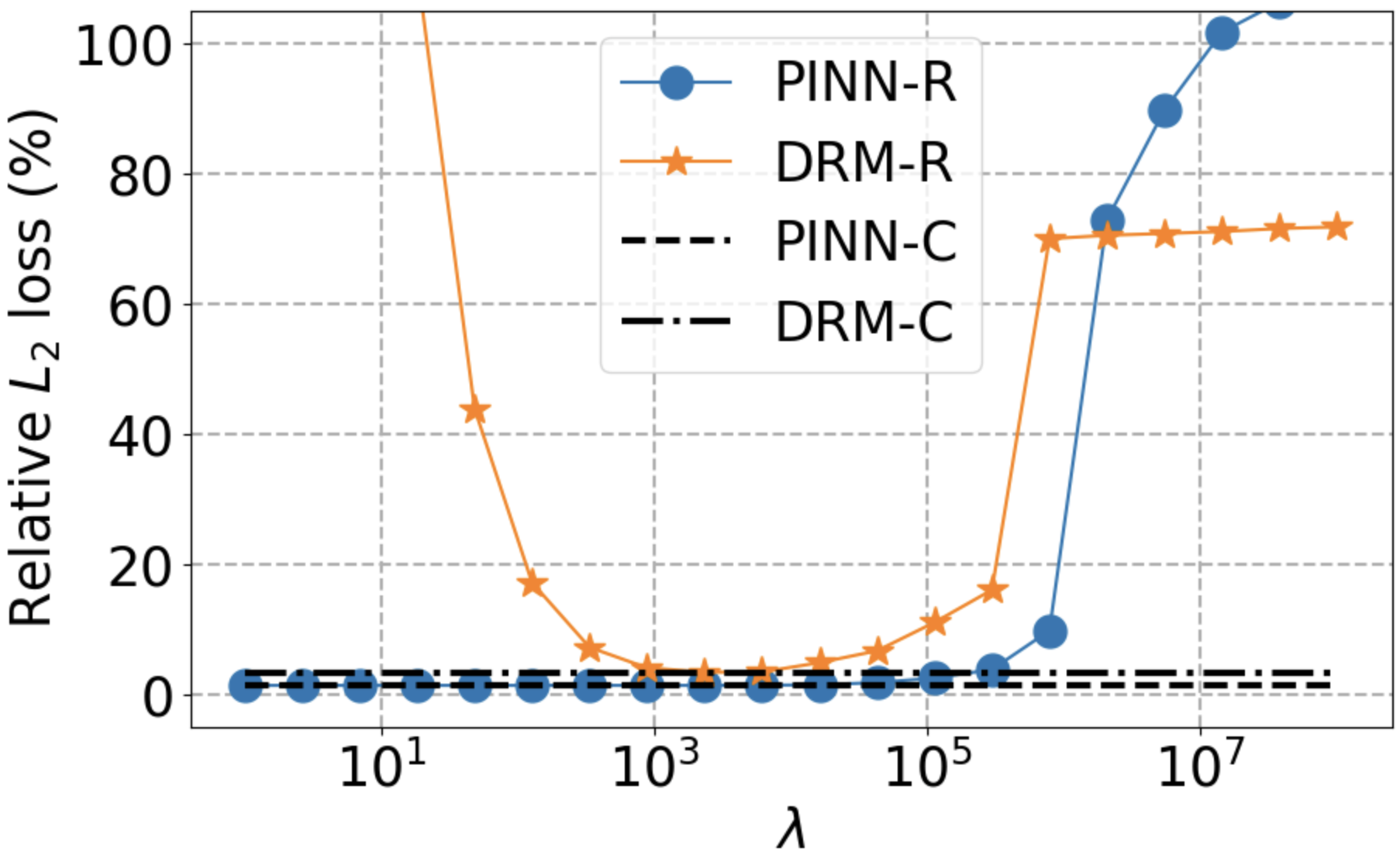}&
\includegraphics[width=0.33\textwidth]{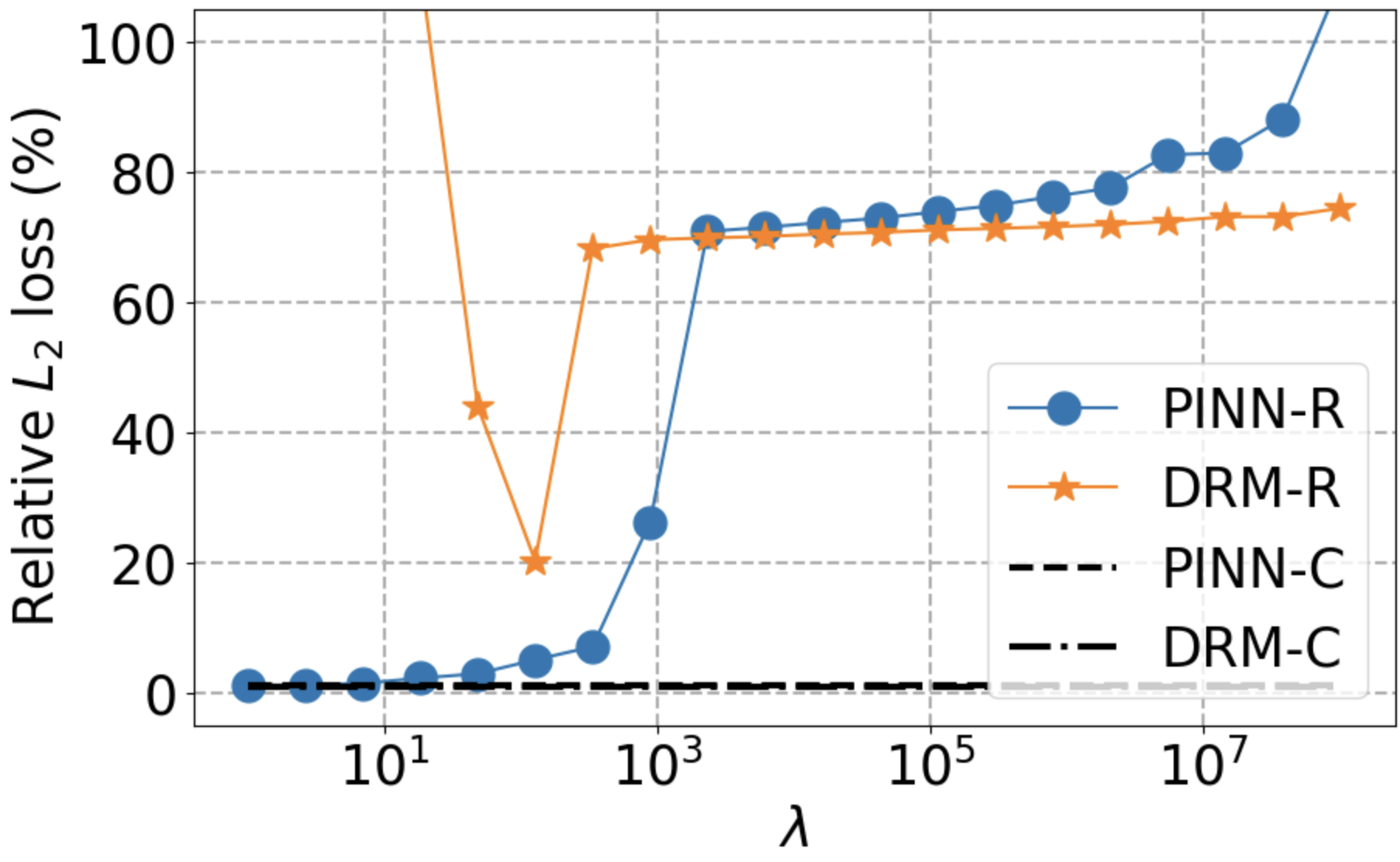}
\end{tabular}
\caption{Relative $L_2$ error of the solution approximation by PINN and DRM when the regularization parameter changes. For PINN, we use $\ReLU^{p}$ activation, and for DRM, we use $\ReLU^{p-1}$, where (a) $p=2$, (b) $p=3$, and (c) $p=4$.  The dashed lines show the relative $L_2$ errors for the solutions obtained when the boundary condition is imposed as a constraint.  The network width $N=300$ and the solution $\sin(2\pi x+3\pi/5)+\sin(25\pi x-2\pi/3)$ are fixed. In this figure, results labeled by PINN-R and DRM-R are obtained by boundary regularization, and the results by PINN-C and DRM-C are obtained by boundary constraint.}\label{lamda_effect_PINN_DRM}
\end{figure}

\subsection{Convergence of iterative solvers}\label{sec_convergence_bd_constraint}

\begin{algorithm}[t]
\caption{Projected Gradient Descent (PGD)}\label{algo_pgd}
\KwData{Learning rate $\gamma > 0$}
\KwResult{$\ba^* = \ba^{T_{\max}}$}
$\ba^{0}\sim \cN(0, \sqrt{2/N})$\;

\For{$t = 0,1,\dots, T_{\max}-1$}{
    $\bz^{t} = \ba^t -\gamma(\bG_F\ba^t-\by_F)$\;

    $\ba^{t+1} = \bz^t-\bB^\top(\bB\bB^\top)^{-1}(\bB\bz^t-\bc)$
}
\end{algorithm}

In practice, finding the solution approximation via PINN or DRM relies on first-order methods such as Stochastic Gradient Descent (SGD). We demonstrate the effect of the spectral decay and frequency bias of Gram matrices on the behavior of the gradient descent.  We focus on the case where the boundary condition is imposed as constraint~\eqref{eq_constrained_quad} first and consider the projected gradient descent (PGD) where only $\ba$ is optimized.

For the constrained optimization~\eqref{eq_constrained_quad} the PGD described in Algorithm~\ref{algo_pgd}, each iteration can be reduced to a single updating formula
\begin{equation}
\ba^{t+1} = \ba^t -\gamma\left(\bI-\bB^\top(\bB\bB^\top)^{-1}\bB\right)(\bG_F\ba^t-\by_F)\;,
\end{equation}
and if we denote $\be^t = \ba^t - \ba^*$ where $\ba^*$ is the solution to~\eqref{eq_constrained_quad}, then we have
\begin{equation}\label{eq_et_evolution}
\be^{t+1} = \left(\bI-\gamma\bP\bG_F \bP\right)\be^t\;.
\end{equation}
where $\bP:= \bI-\bB^\top(\bB\bB^\top)^{-1}\bB$ is the projection to the null space of $\bB$. Hence,
\begin{equation}
\|\be^{t+1}\|\leq \max\{|1-\gamma\lambda_{\max}(\bP\bG_F\bP)|,|1-\gamma\lambda_{\min}(\bP\bG_F\bP)| \}\cdot \|\be^t\|\;.
\end{equation}
As $\bB$ has rank $2$, $\lambda_{\min}(\bP\bG_F\bP) = 0$. If $0<\gamma<2/\lambda_{\max}(\bP\bG_F\bP)$, then PGD converges with rate $|1-\gamma\lambda_{\max}(\bP\bG_F\bP)|<1$. Since the dimension of the null space of $\bB$ is $N-2$, as $N$ gets sufficiently large, we expect  $\bP$ to be approximated by an identity matrix $\bI\in\mathbb{R}^{N\times N}$. More precisely, we have
$$
\mathbb{E}_{\bx \sim\cU(\mathbb{S}^{N-1})}\|(\bI - \bP)\bx\|_2 = \sqrt{2/N}
$$
where $\cU(\bbS^{N-1})$ is the uniform distribution over the unit sphere $\bbS^{N-1}$.  Suppose $\bG_F = \bU_F\bLambda_F\bU_F^\top$ is the eigendecomposition of $\bG_F$, then we can deduce from~\eqref{eq_et_evolution} the relation
\begin{equation}\label{eq_eigen_constraint}
\bU_F^\top\be^{t+1} = (\bI-\gamma\bU_F^\top\bP\bU_F\bLambda_F\bU_F^\top\bP\bU_F)\bU_F^\top\be^t 
\end{equation}
which is approximately $(\bI-\gamma\bLambda_F)\bU_F^\top\be^t$ as $N\to\infty$. This observation reveals how approximation errors evolve within the eigenspace of the Gram matrix $\bG_F$ for sufficiently wide networks.  For instance, the error at iteration $t+1$ projected onto the $j$-th eigenvector of $\bG_F$ depends approximately on the error at iteration $t$ projected onto the $j$-th eigenvector, scaled by $1-\gamma\lambda_j$, where $\lambda_1\geq \lambda_2\geq \cdots\geq\lambda_N$ are the eigenvalues of $\bG_F$. Combined with the spectral decay (Theorem~\ref{theorem_sharp_eigenvalue}) and characterization of the eigenfunctions (Proposition~\ref{prop_decaying}), this indicates that errors in high frequency decays very slowly during the iterations. Hence, the ill-conditioning and frequency bias against high frequency in the representation lead to slow learning dynamics for high frequency components.
Moreover, using $\ReLU^p$ with a larger power $p$ leads to a faster spectral decay of the Gram matrix, a stronger frequency bias, and consequently, slower learning dynamics for high-frequency components.

\begin{figure}
\begin{center}
\begin{tabular}{c@{\vspace{2pt}}c@{\vspace{2pt}}c}
(a)&(b)&(c)\\
\includegraphics[width=0.33\textwidth]{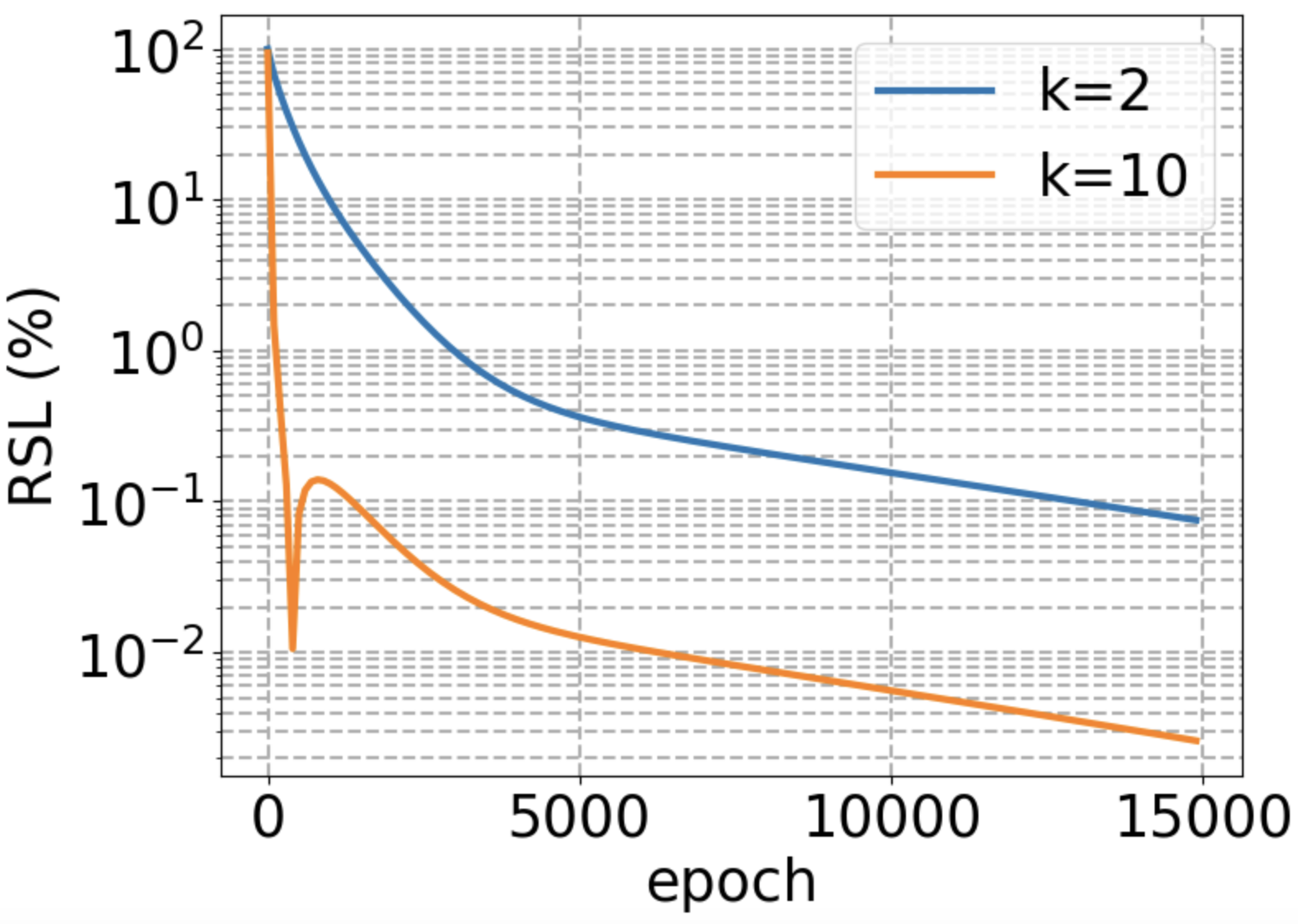}&
\includegraphics[width=0.33\textwidth]{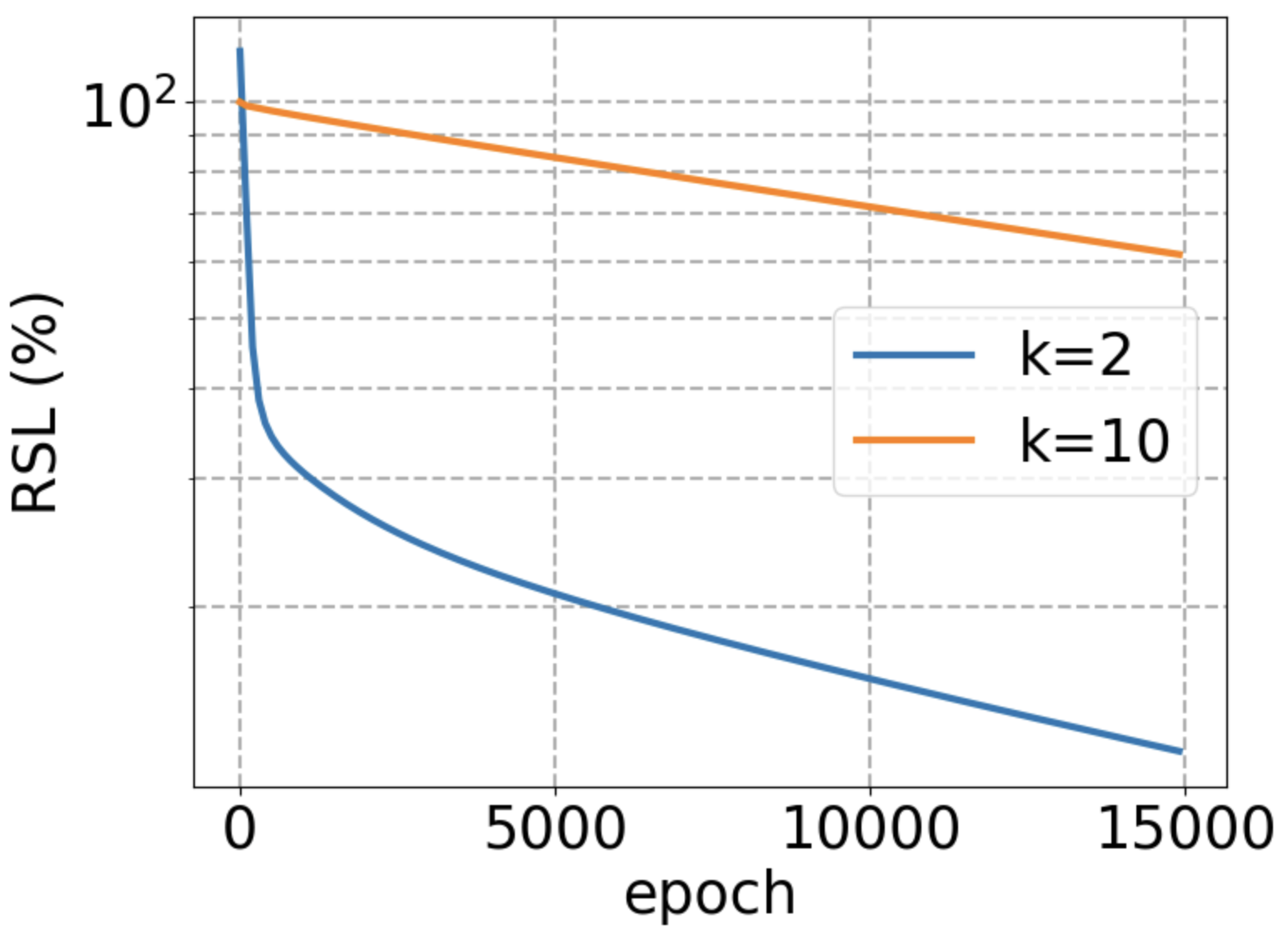}&
\includegraphics[width=0.33\textwidth]{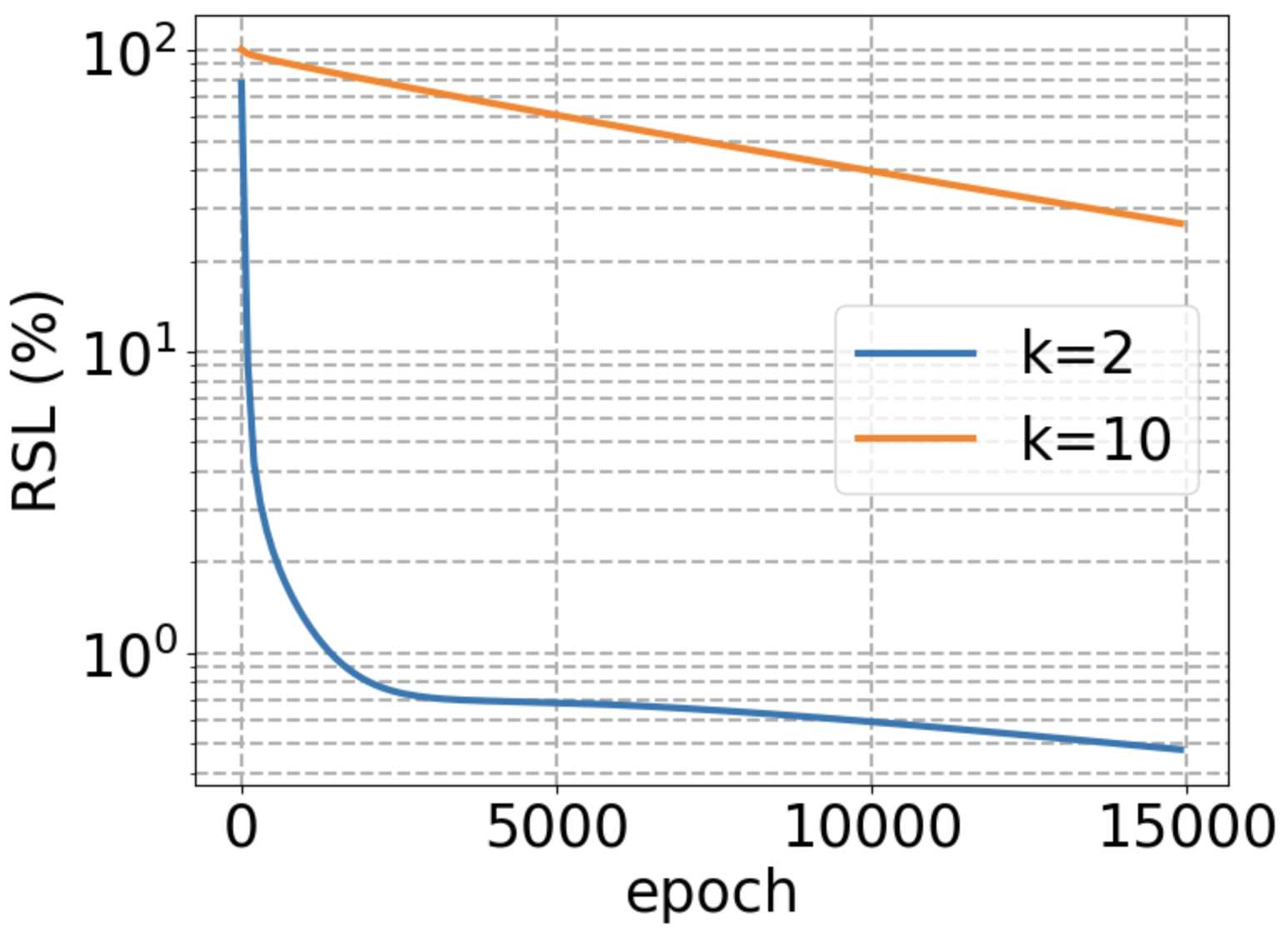}
\end{tabular}
\end{center}
\caption{Relative spectral loss~\eqref{eq_RSL} in the PGD solutions of (a) quadratic spline-based  FEM, (b) PINN  with $\ReLU^3$, and (c) DRM with $\ReLU^2$. The solution is $\sin(2\pi x)+\sin(10\pi x)$. For all methods, the learning rate is set as $1.99/\lambda_{\max}(\bP\bG_F\bP)$ and the number of bases is $N=200$. }\label{eq_constrained_bd_spectral_bias}
\end{figure}

Figure~\ref{eq_constrained_bd_spectral_bias} evaluates the RSL defined in~\eqref{eq_RSL} throughout the PGD iterations for three methods: (a) quadratic spline-based FEM, (b) PINN with $\ReLU^3$, and (c) DRM with $\ReLU^2$. The PDE solution is set to $\sin(2\pi x)+\sin(10\pi x)$. All methods use a learning rate of $\gamma = 1.99/\lambda_{\max}(\bP\bG_F\bP)$ and a number of bases (or network width) of $N=200$. For FEM, we observe that the high-frequency error decreases faster than the low-frequency error. In contrast, the low-frequency errors for both PINN and DRM decrease faster than the high-frequency errors, verifying our analysis. Moreover, since the spectral decay of Gram matrices for both PINN and DRM ($\Theta\left(k^{-4}\right), k=1,2, \ldots, N$, $k$ is the $k$-th mode) is faster than that for FEM ($\Theta\left((\frac{k}{N})^2\right)$), the error decay for PINN and DRM is slower than that for FEM using gradient descent. 

Here are the contrasting numerical implications:
\begin{itemize}
\item For the FEM method, although the representation has no frequency bias, the stiffness matrix inherits the spectral property from the differential operator, which results in ill-conditioning and frequency bias against low frequencies. If an iterative method is used, smooth components of the solution require many iterations to recover, which makes them useless in real practice if no preconditioning is applied.
\item 
For PINN or DRM, the corresponding system inherits the ill-conditioning and frequency bias against high frequency from the NN representation. When using the gradient-based method to solve the optimization problem, smooth components of the solution may be recovered quickly, while capturing the high-frequency components leads to a computational challenge. Moreover, since the Gram matrix, or the Jacobian of the loss function in general, is dense, it is very costly, if not impossible, to achieve high accuracy if the solution contains significantly high-frequency components.
    
\end{itemize}

\subsection{Neumann boundary condition}
\begin{figure}
\centering
\begin{tabular}{c@{\vspace{2pt}}c@{\vspace{2pt}}c}
(a)&(b)&(c)\\
\includegraphics[width=0.33\textwidth]{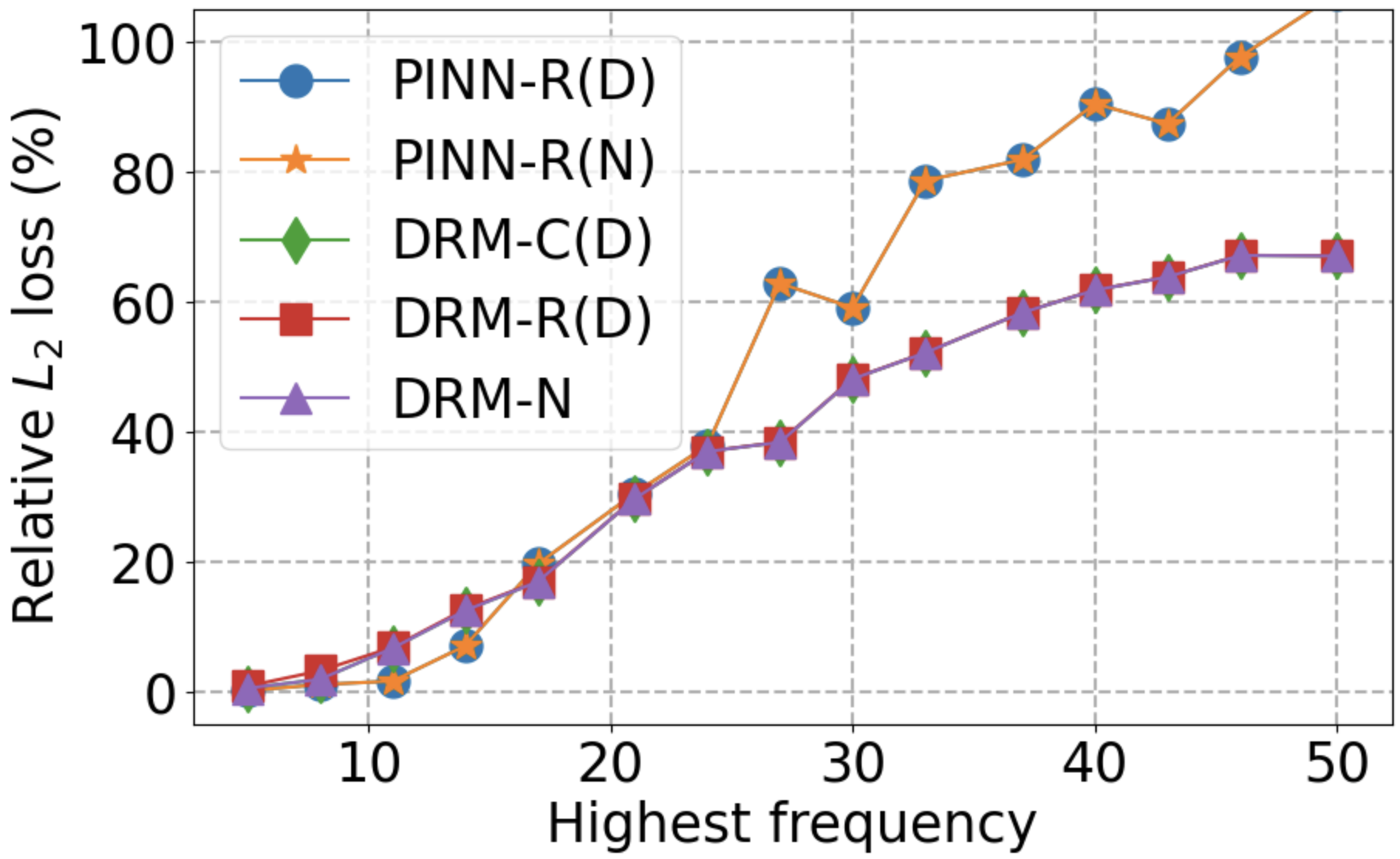}&
\includegraphics[width=0.33\textwidth]{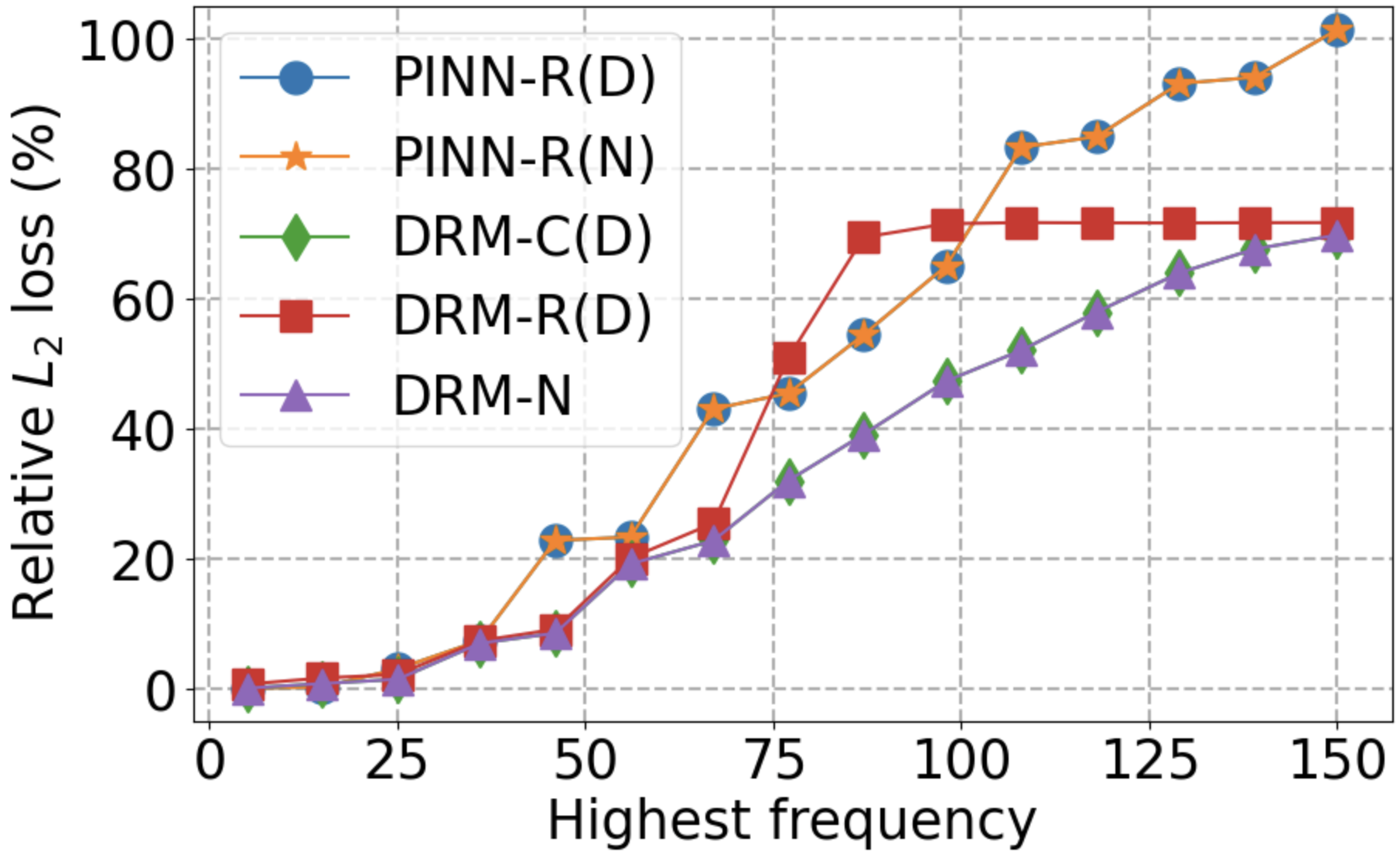}&
\includegraphics[width=0.33\textwidth]{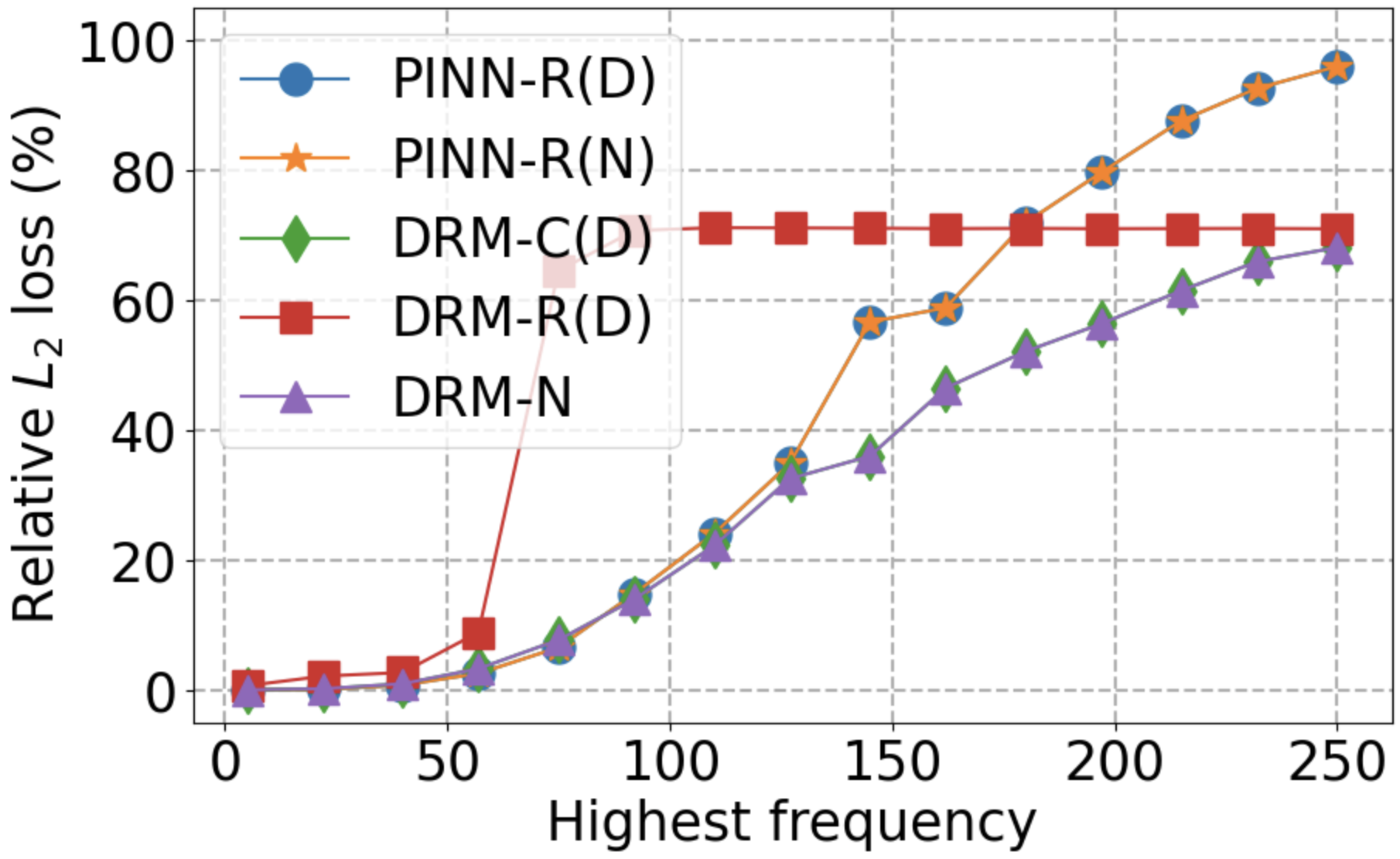}
\end{tabular}
\caption{Relative $L_2$ error of the solutions by PINN and DRM  with Dirichlet and Neumann boundary conditions when the number of bases is (a) $N=100$, (b) $N=300$, (c) $N=500$, as the highest frequency $k_{\max}$ of the solution $\sin(2\pi x+3\pi/5)+\sin(k_{\max}\pi x-2/3\pi)$ increases. For PINN-D and PINN-N, we use $\ReLU^3$ and impose the boundary conditions using regularization with $\lambda = 0.1k_{\max}$. For DRM-D, we use $\ReLU^2$ and impose the boundary conditions using regularization $\lambda = 50k_{\max}$. For DRM-N, the boundary condition is naturally included in the  loss function~\eqref{eq_neumann_drm}.}\label{acc_PINN_DRM_reg_neumann}
\end{figure}
In this section, we compare PINN and DRM for the Poisson equation with Dirichlet boundary conditions with their respective counterparts with the Neumann boundary condition. That is
\begin{equation}\label{eq_neumann_PDE}
\begin{aligned}
u_{xx} &= -f\\
u'(-1) = c'_L,&~u'(1)=c'_R 
\end{aligned}\;.
\end{equation}
In this case, the linear PINN  with boundary regularization is formulated as
\begin{equation}\label{eq_neumann_pinn}
\min_{\ba\in\mathbb{R}^N}\int_{-1}^1(u_{xx}(x,\ba) + f(x))^2\,dx+\lambda\left((u'(-1,\ba) - c'_L)^2 + (u'(1,\ba) - c'_R)^2 \right)\;.
\end{equation}
In the variational form of~\eqref{eq_neumann_PDE}, the Neumann boundary  naturally appears, and the solution approximation becomes an unconstrained optimization problem without any regularization parameter:
\begin{equation}\label{eq_neumann_drm}
\min_{\ba\in\mathbb{R}^N}\frac{1}{2}\int_{-1}^1u_x^2(x,\ba)\,dx -\int_{-1}^1u(x,\ba)f(x)\,dx - c'_Ru(1,\ba) +c'_Lu(-1,\ba)\;.
\end{equation}
In Figure~\ref{acc_PINN_DRM_reg}, we compare PINN and DRM with different widths, different boundary conditions, and varying the highest frequency in the underlying solution. The  solution is $\sin(2\pi x+\pi/5)+\sin(k_{\max}\pi x+\pi/3)$, and $k_{\max}$ changes. We test the following cases: (i) PINN with Dirichlet boundary regularization, denoted by PINN-R(D); (ii) PINN with Neumann boundary regularization, denoted by PINN-R(N); (iii) DRM with Dirichlet boundary constraint, denoted by DRM-C(D); (iv) DRM with Dirichlet boundary regularization, denoted by DRM-R(D); and (v) DRM with Neumann boundary condition~\eqref{eq_neumann_drm}, denoted by DRM-C(N). Since there is a constant shift when the Neumann condition is used, for this set of experiments, we evaluate the approximations after subtracting the mean. We note that in all of the tested cases, PINN-R(D) and PINN-R(N) perform almost identically using the same regularization parameter $\lambda$. As for DRM, when $N=100$ as shown in (a), we see that different paradigms of DRM perform similarly. When $N$ gets larger, e.g., $N=300$ in (b) and $N=500$ in (c), we observe that DRM-C(D) and DRM-N remain effective and identical, but DRM-R(D) fails to approximate the solution well if $k_{\max}$ is high, while DRM-N consistently achieves better accuracy.  

This experiment demonstrates that for the DRM, imposing Dirichlet boundary conditions via regularization is generally ineffective. This is because with regularization, the minimization of~\eqref{eq_DRM_in_loss} can generally take place in wrong function spaces, and consequently, the result is not guaranteed to approximate the solution of the Poisson equation.  This problem does not occur for the DRM with hard Dirichlet constraints or with Neumann boundary conditions, which are incorporated naturally. These results reinforce the general conclusion that the DRM's accuracy is highly sensitive to the boundary regularization parameter.

\section{Numerical Perspectives of General Cases}\label{sec_other_activate}

Our previous experiments and analysis focused on the well-controlled case where  SNNs are formulated as a linear representation. In particular, with ReLU power as the activation function, the spectral properties of the Gram matrices associated with PINN and DRM can be explicitly characterized. In this section, we generalize our study numerically to other activation functions and in more general settings. More interestingly, we will conduct numerical experiments to show that using a fully nonlinear SNN representation with all parameters trainable can achieve interesting adaptivity. However, the computational results are not necessarily better than using a linear representation with a comparable total degrees of freedom. The costly, large-scale, non-linear optimization involved makes it less favorable to classical and well-developed linear methods such as FEMs.

\subsection{Shallow NNs with non-homogeneous activation}\label{sec_non_homogeneous_activation}

 In this section, we numerically investigate the use of other activation functions in SNNs. Most importantly, we highlight a property not shared by ReLU-type functions: \textit{scaling}. Note that  $\sigma = \ReLU^p$ is $p$-th degree homogeneous; that is, $\sigma(Sx) = S^p\sigma(x)$ for any $S>0$; thus means that a scaling of the variable can be absorbed in the amplitude, or in other words, scaling does not change the space spanned by the family of activation functions. In contrast,  using other more general activation functions such as $\sin$ and $\tanh$, the situation is different. As shown in \cite{zhang2025shallow}, appropriate scaling of the initial parametrization leads to a slower spectral decay of the leading singular values of the corresponding Gram matrix in a range proportional to the scaling.  As a result, it reduces the ill-conditioning and frequency bias and hence improves the representation capability of SNNs significantly, even as a linear representation using a random basis. However, over-scaling with respect to the network width may introduce fast transitions and/or high frequencies that cannot be represented well by the network and lead to unsatisfactory performances. We will show that this is also true when solving PDEs by applying scaling in the initial parametrization. 
 
\begin{figure}
\centering
\begin{tabular}{c@{\vspace{2pt}}c@{\vspace{2pt}}c}
(a)&(b)&(c)\\
\includegraphics[width=0.33\textwidth]{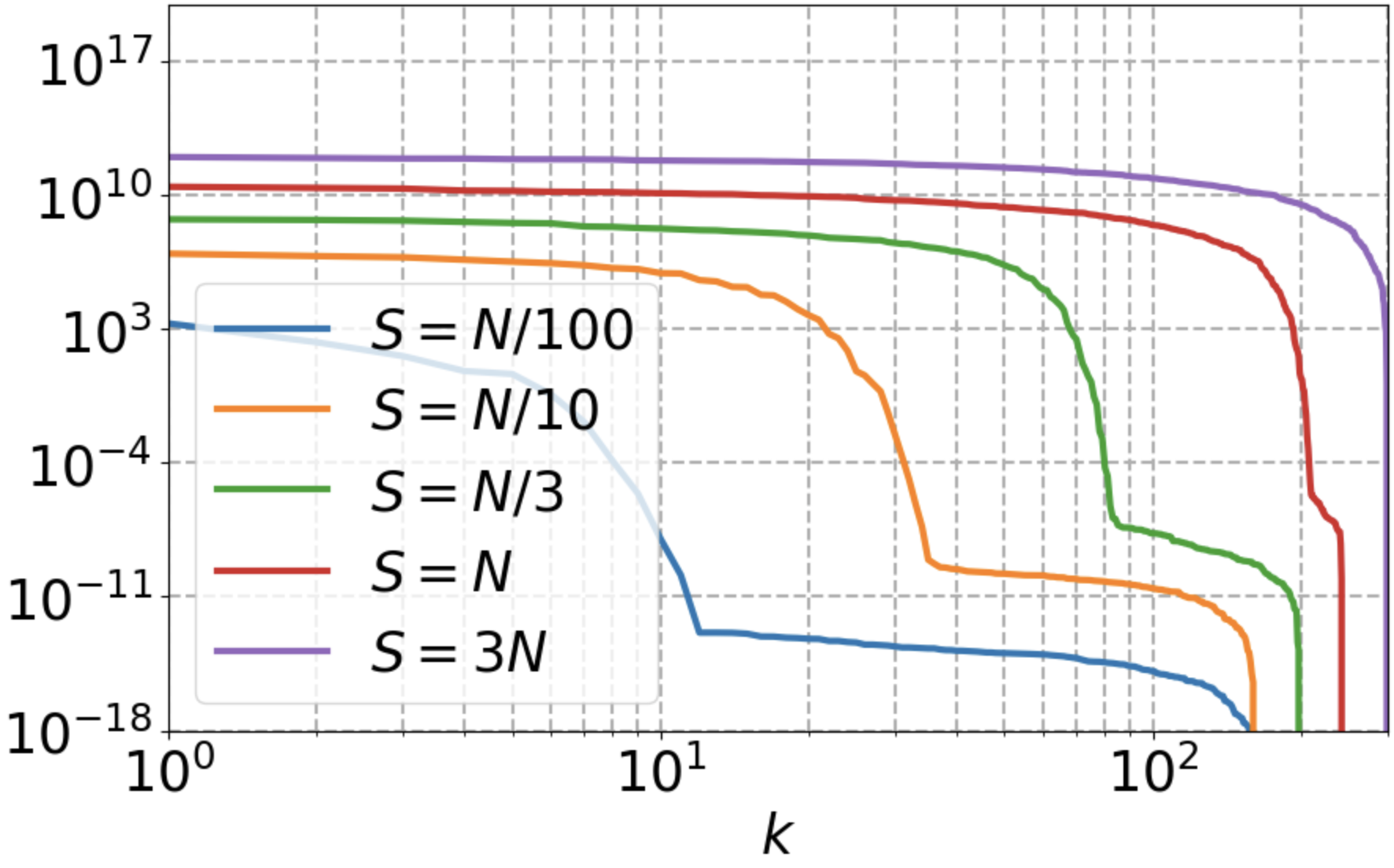}&
\includegraphics[width=0.33\textwidth]{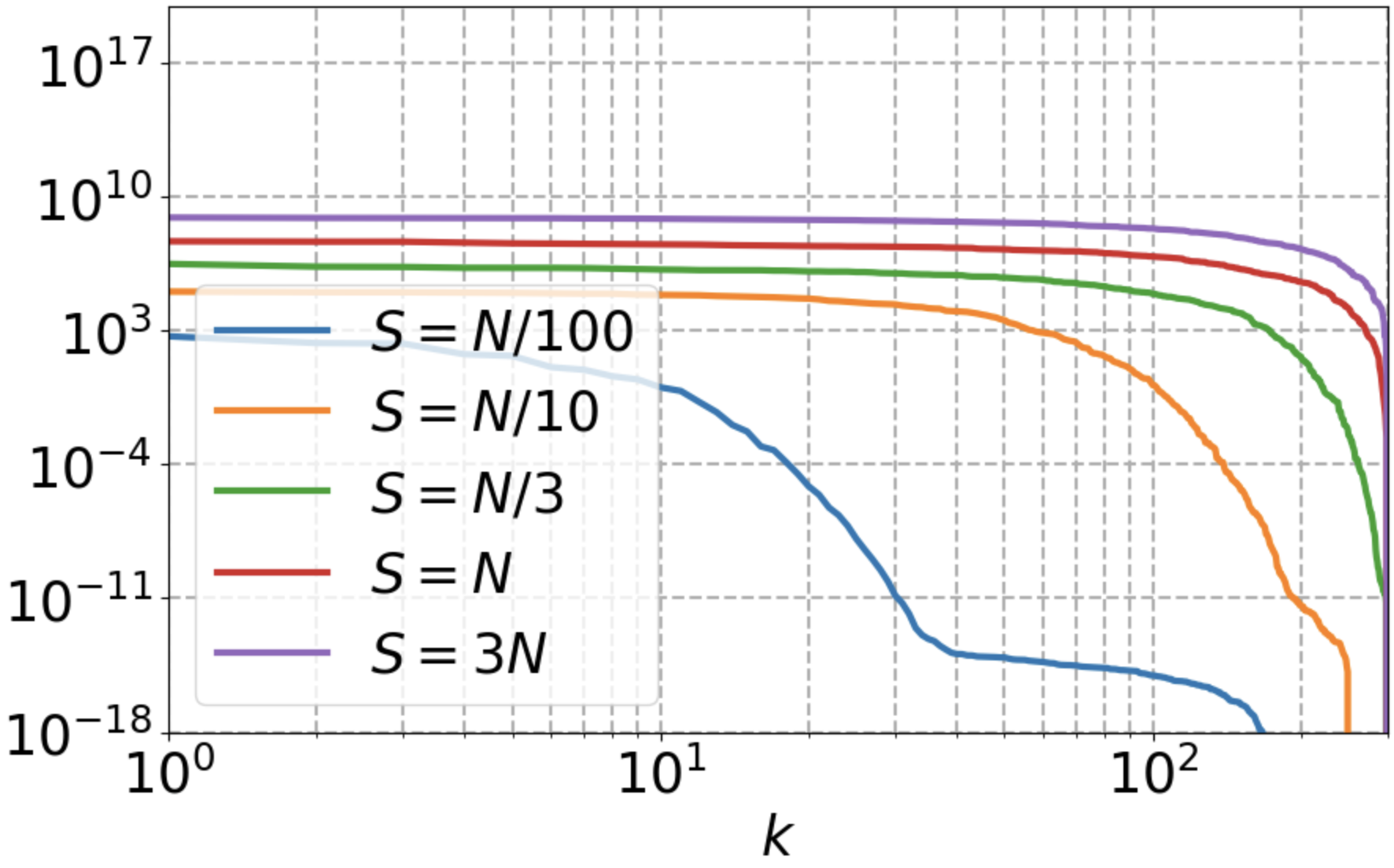}&
\includegraphics[width=0.33\textwidth]{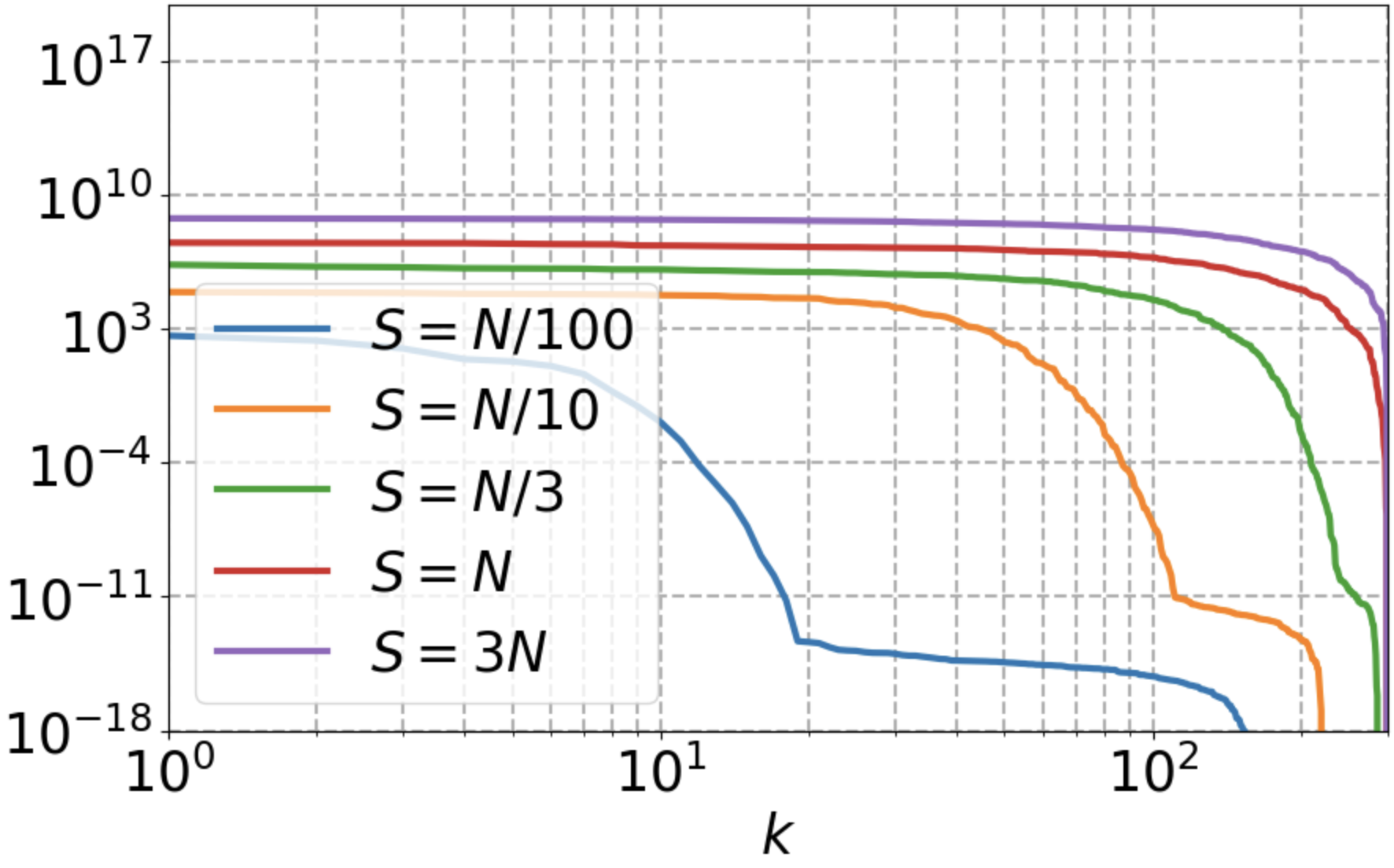}
\end{tabular}
\caption{Spectra of the PINN KKT matrices~\eqref{eq_opt_KKT} associated with bases $\{\sigma(w_i(x-b_i))\}_{i=1}^N$, where $\sigma$ is  (a) $\sin$ activation,  (b) $\tanh$ activation, and (c) GELU activation~\cite{hendrycks2016gaussian}, where $w_i\sim \cU(-S,S)$, $b_i\sim \cU(-1,1)$, $i=1,\dots,N$. The scale $S\in\{N/100, N/10, N/3, N, 3N\}$, the network width $N=300$, and the integrals in the Gram matrices are approximated by a Riemann sum using $10000$ regular grid points. Spectral decays for the DRM Gram matrices are similar.}\label{fig_spectrum_scale}
\end{figure}

Figure~\ref{fig_spectrum_scale} shows the spectra of the PINN KKT matrices~\eqref{eq_opt_KKT} associated with the bases $\{\sigma(w_i(x - b_i)),i=1,\dots, N\}$  where $\sigma$ is the activation function, $w_i\sim\cU(-S,S)$ and $b_i\sim\cU(-1,1)$ are independently sampled, and $S>0$ is a \textit{scaling parameter}. With networks of width $N=300$, in (a), we use the $\sin$ activation function; in (b), we use the $\tanh$ activation function; and in (c), we use GELU~\cite{hendrycks2016gaussian}, a non-homogeneous variant of the ReLU. Due to the scaling, the basis functions are less smooth; that is, they have larger derivatives or higher-frequency components and are therefore less correlated.  We see that Gram matrices with greater values of scaling $S$ exhibit a wider range (proportional to $S$) of significant leading spectrum before it decays fast. It means that initial scaling can reduce ill-conditioning and frequency bias, which is true for other scalable activation functions in general. However, this does not imply that a larger scaling factor  $S$ is always better. As shown in \cite{zhang2025shallow}, the network size needs to be compatible with the scaling, i.e., $S=\mathcal{O}(N)$. Otherwise, if $S$ is too large, the family of $N$ bases may not be able to bridge all immediate scales from $0$ up to $1/S$, or the network size cannot resolve/represent the change or oscillations at scale $1/S$ well. This phenomenon is called \textit{over-scaling}. Next, we show the effect of initial scaling when solving PDEs using SNNs.    

\begin{figure}
\begin{tabular}{c@{\vspace{2pt}}c@{\vspace{2pt}}c}
(a)&(b)&(c)\\
\toprule
\multicolumn{3}{c}{PINN}\\
\includegraphics[width=0.33\textwidth]{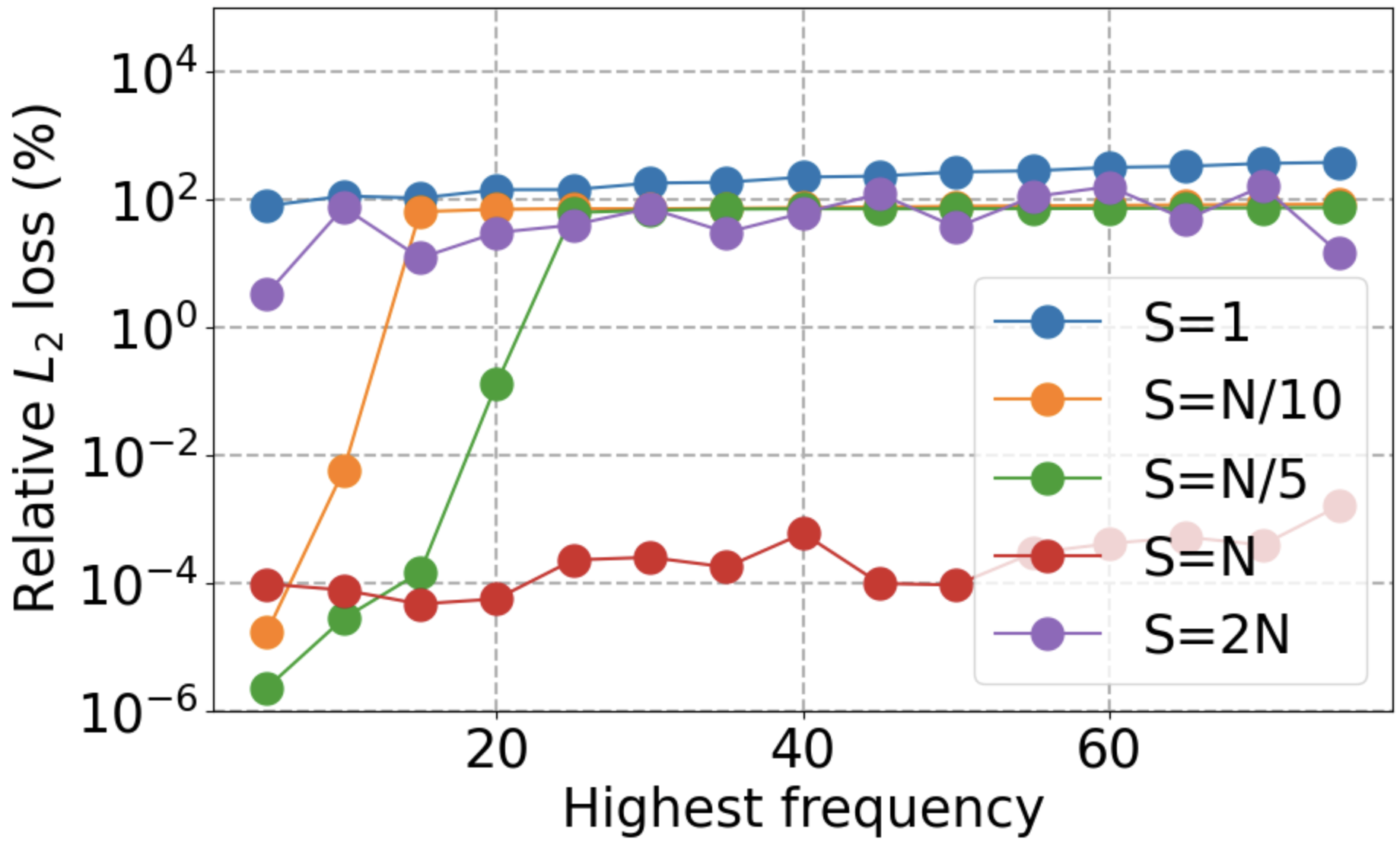}&
\includegraphics[width=0.33\textwidth]{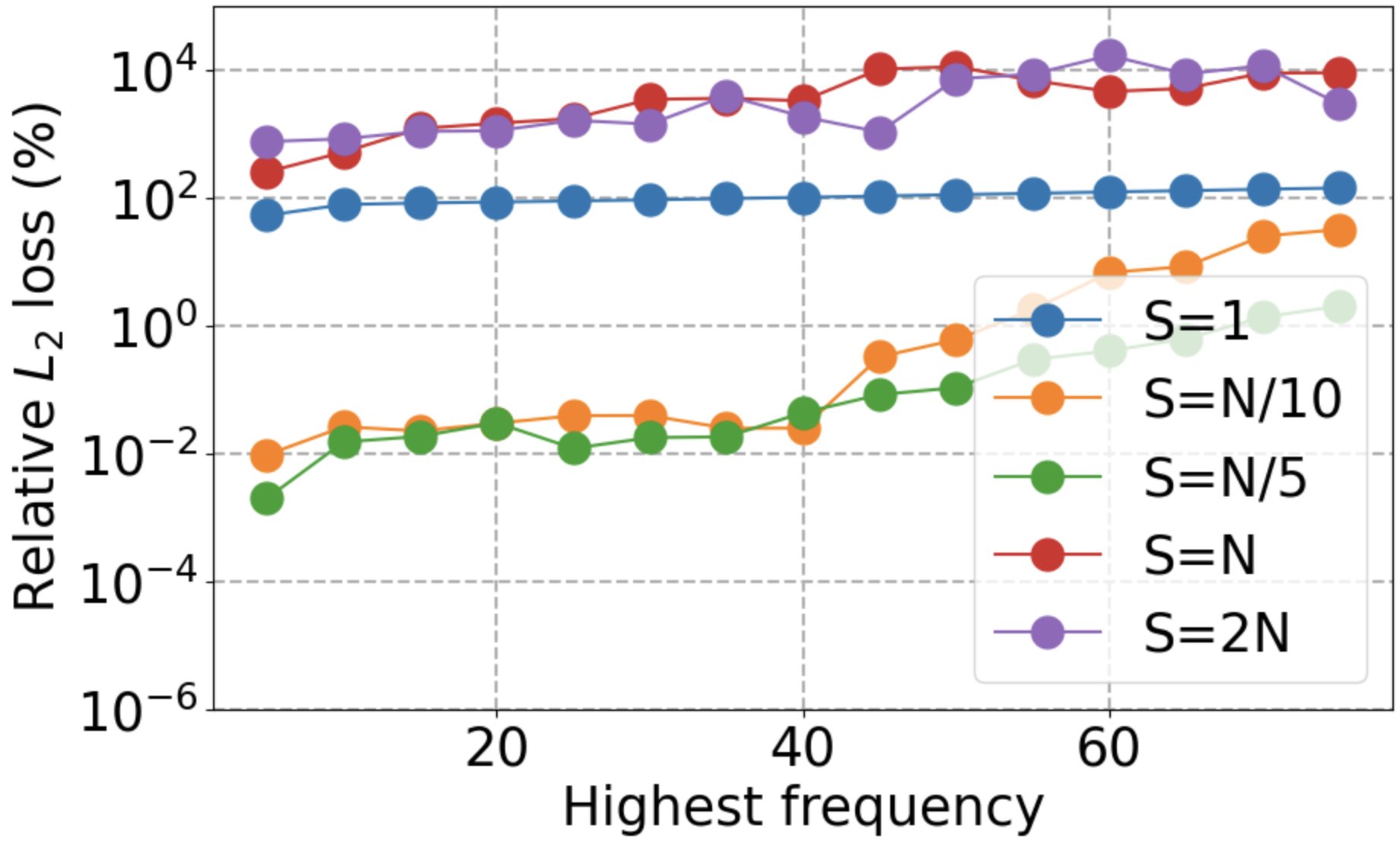}&
\includegraphics[width=0.33\textwidth]{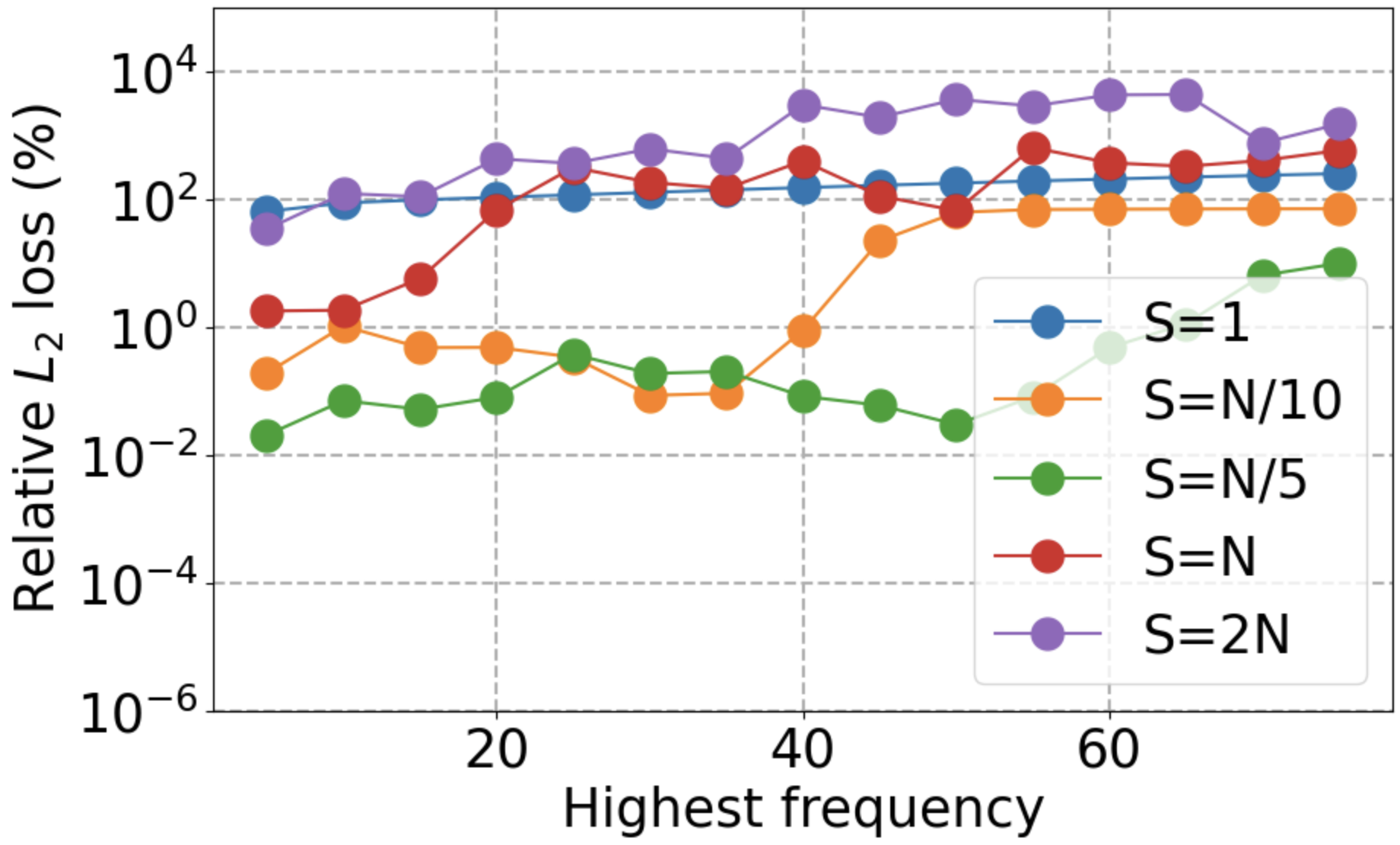}\\
\toprule
\multicolumn{3}{c}{DRM}\\
\includegraphics[width=0.33\textwidth]{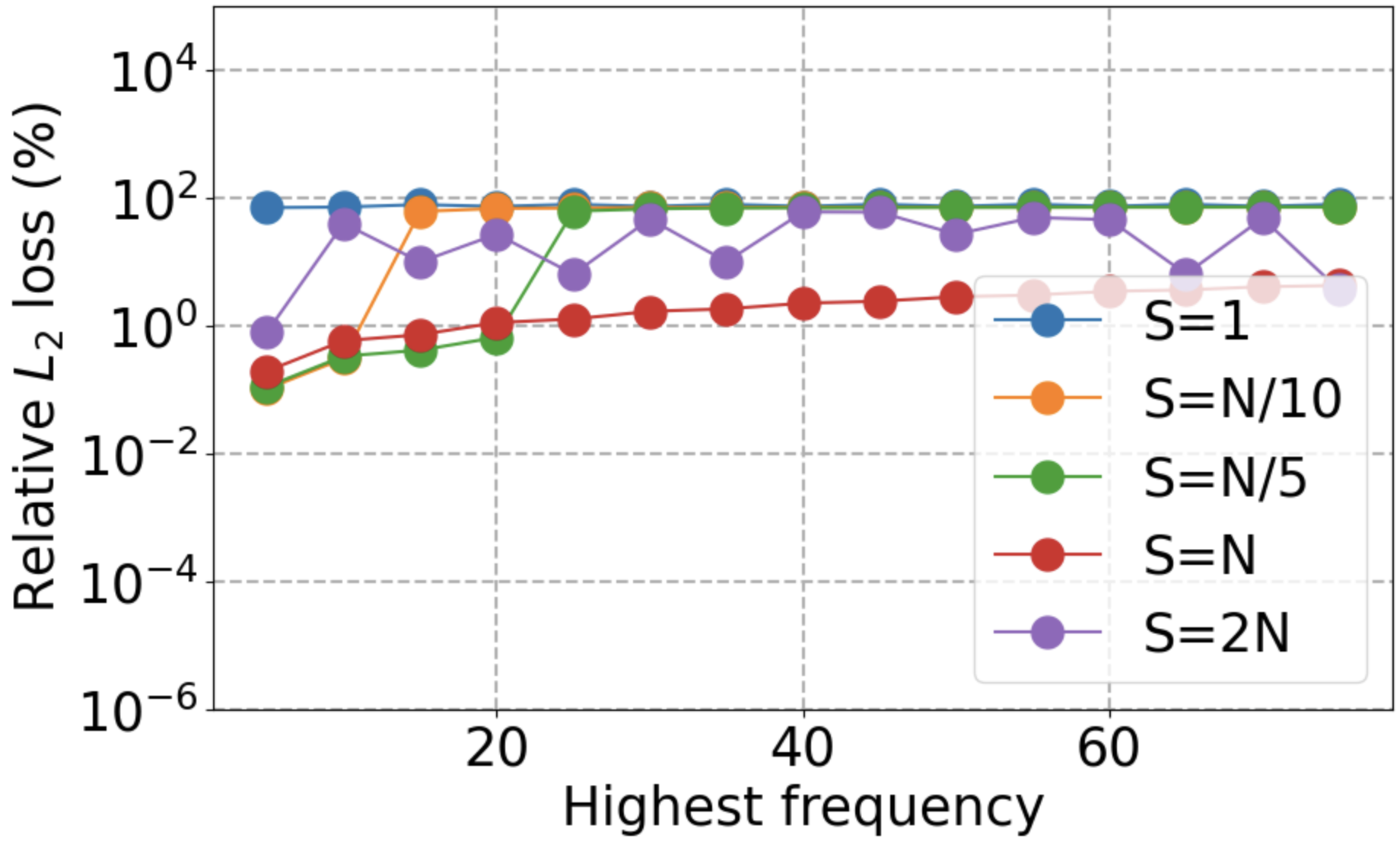}&
\includegraphics[width=0.33\textwidth]{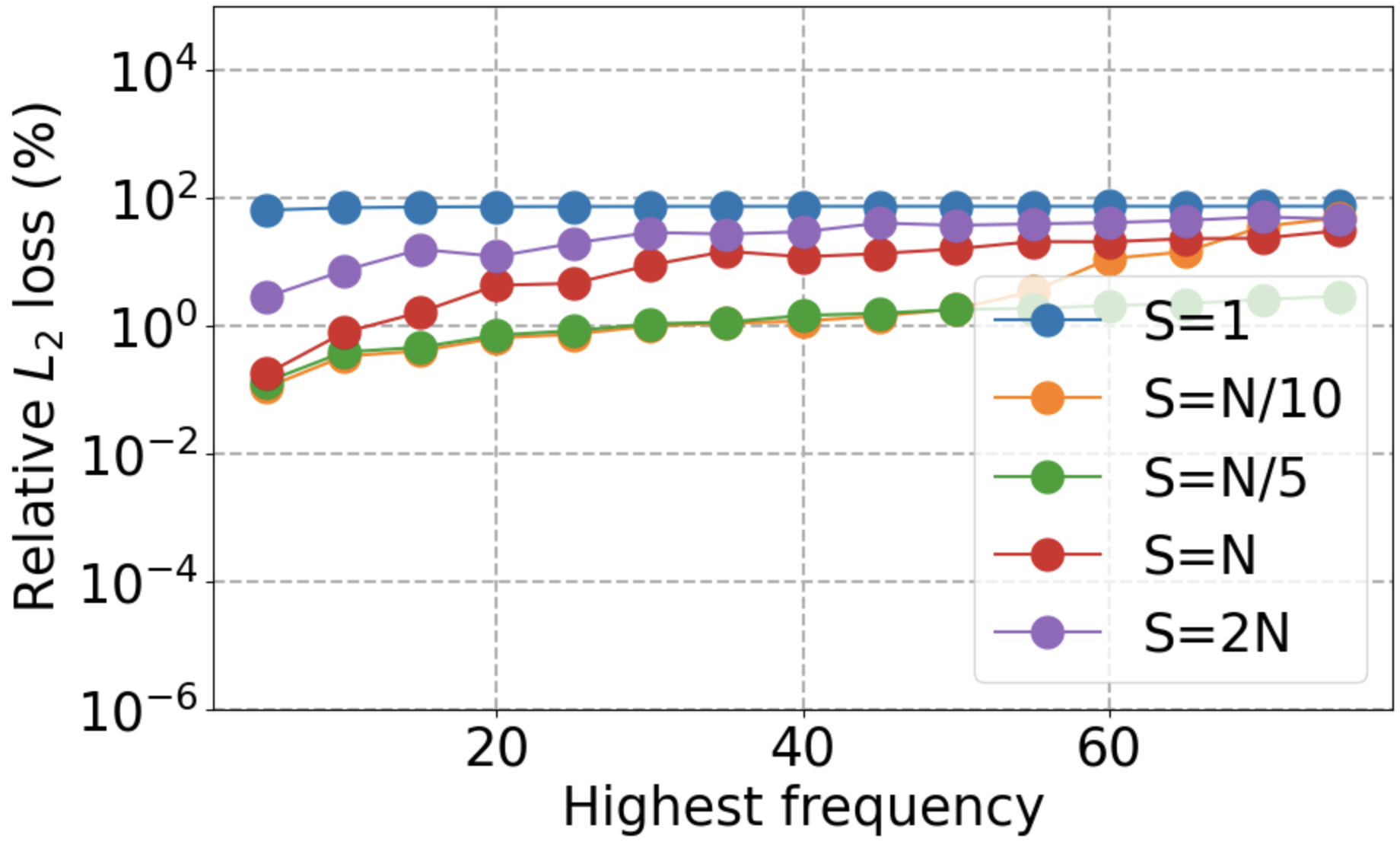}&
\includegraphics[width=0.33\textwidth]{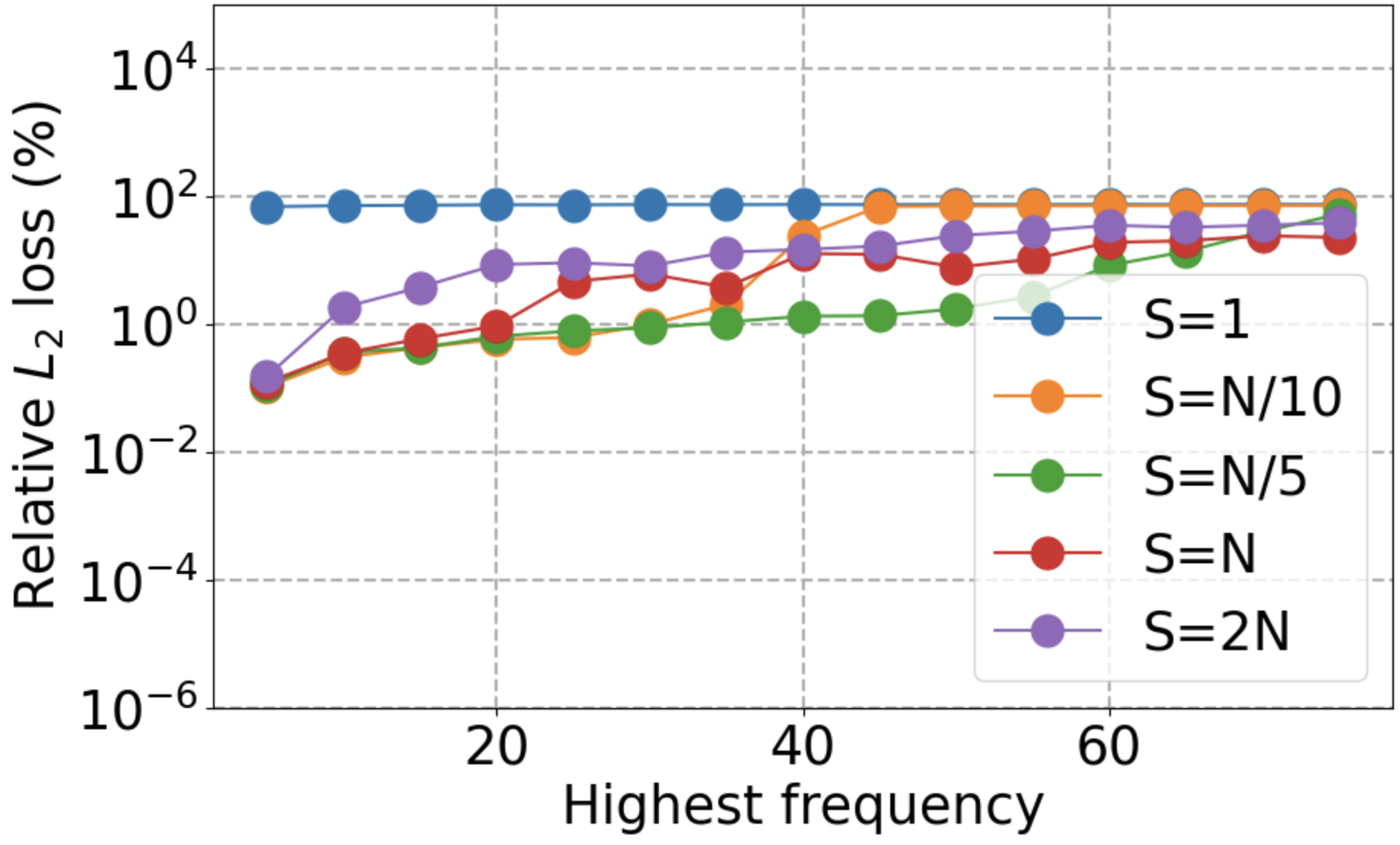}
\end{tabular}
\caption{Approximation errors of PINN and DRM with different scaling and activation:  (a) $\sin$; (b) $\tanh$; and (c) GELU. The network setups are identical to those in Figure~\ref{fig_spectrum_scale}. The boundary conditions are enforced as constraints, and the integrals in the Gram matrices are approximated by a Riemann sum using $10000$ regular grid points. The results are reported as mean values collected from  $20$ independent experiments for each setting.}\label{fig_activation_accuracy}
\end{figure}

In Figure~\ref{fig_activation_accuracy}, we solve the Dirichlet problem for the Poisson equation using linear PINN and DRM model with different activations and scaling. The solution is a superposition of two sine waves with wave numbers $2$ and $k_{\max}$, where $k_{\max}$ varies. We enforce the boundary conditions as constraints and solve a least square problem to find the optimal parameter $\ba$ using the direct method (Section~\ref{sec_constraint_accuracy}). In (a)-(c), we report the relative $L_2$ loss of the solution approximation for $\sin$ activation, $\tanh$ activation, and GELU activation, respectively. The scaling factor $S$ for both weights and biases is a multiple of the network width $N=300$. The results for PINN are shown in the first row, and those for DRM are in the second row.  For all cases, the errors increase as $k_{\max}$ increases, consistent with the performances of SNNs with $\ReLU$ power activations.

For these tests, we see a common phenomenon, which is that the performance improves as the scaling constant $S$, as a multiple of the network width $N$, increases and then degrades when over-scaling happens.  With relatively small scaling, the bases are smooth and lead to a fast spectral decay of the Gram matrix, which results in ill-conditioning and bias against high frequency. Hence, as the solution contains higher-frequency components, the numerical error increases. However, if $S$ becomes too large, it leads to over-scaling of the family of $N$ bases, which will result in poor numerical results.   
Let us analyze the results using sin as the activation function. The linear setting is equivalent to using a set of $N$ Fourier bases with frequency randomly sampled in $[0,S]$ to approximate the solution. 
Once $S$ is above $k_{\max}\pi\approx Nk_{\max}/100$, the space spanned by $\sin$ activation function scaled with $S$ can represent the frequency component of  $k_{\max}$ on an average sense. This explains that the error curves in (a) suddenly increase around $k_{\max}= 30$ for $S=N/3=100$ and  $k_{\max}=50$ for $S=N/2 = 150$. When $S=N=300$, the highest frequency it can cover is around $100$, which explains why the corresponding error curve (the red one) consistently remains low. However,  as $S$ keeps increasing, there exist larger gaps in the randomly sampled frequency. There are certain frequencies, maybe even low frequencies, that cannot be approximated well by the sampled $N$ frequencies. This explains the unstable performances in (a) for $S=2N$. As for the $\tanh$ and GELU activations, we note that $S$ also needs to be appropriately tuned. In particular, with too small $S$, high frequency components cannot be well approximated; and with too large $S$, the distribution of $N$ biases (similar to a grid) in the interval may not be able to resolve/represent the fast changes for those activation functions with large gradients due to over-scaling. 
The specific threshold seems to be related to the maximum of the first derivatives of the activation functions. 

\begin{remark}
For two-layer NNs in the linear setting, it is the same as random feature methods studied in \cite{chen2022bridging, chen2024optimization}. It shows that appropriate scaling can be important. It is equivalent to providing a set of $N$ (network width) diverse and balanced bases that can cover a continuous range of scales as wide as possible. However, using a large set of random global basis will result in a large and dense Gram matrix that is computationally costly to construct or perform matrix operations compared to a well-structured basis, such as a finite element basis.
\end{remark}

\subsection{Scaling and adaptivity}\label{sec_deeper}
In this part, we consider the fully-trainable SNNs, which are nonlinear models~\eqref{eq_two_layer_NN} and the associated optimization problems~\eqref{eq_regularized_optimization} for both PINN and DRM become non-convex and non-linear. The constrained optimization~\eqref{eq_constrained_optimization} is more challenging to solve, thus it is rarely considered in practice. Nevertheless, we mention a recent work~\cite{bao2024wanco} addressing NN training with constraints. 

In Figure~\ref{fig_comparison_pinn_varying}, we compare the solution approximation errors of two models: (1) linear PINN and  (2)  PINN with fixed sampling, i.e., a fixed set of points is used for Monte-Carlo approximation of the loss for each epoch. To illustrate the effects of scaling in more general cases, we consider the following elliptic equation with the Dirichlet boundary condition:
\begin{equation}\label{eq_PDE_problem_varying}
		-\partial_x\left(D(x)\partial_xu\right)  + u=f~\text{in}~(-1,1)
\end{equation}
where $D(x) = \sin(\pi x)+2$ is varying. We set the underlying solution as:
\[
u(x) = 1000 \cdot C(x) \cdot \sum_{i=1}^{3} A_i \cdot \sin\left(\omega_i (x - c_i)^2\right) \cdot \exp\left(-\alpha_i (x - c_i)^2\right)\;,
\]
where the cutoff function $C(x)$ is given by:

\[
C(x) = 
\begin{cases}
\exp\left(-\dfrac{1}{(x + 0.6 + \epsilon)^2 (0.6 - x + \epsilon)^2}\right) & \text{if } -0.6 < x < 0.6 \\
0 & \text{otherwise}
\end{cases}
\]
with $\epsilon = 10^{-12}$, and the bump parameters are:
\[
\begin{array}{llll}
A_1 = 1.0, & \omega_1 = 60, & \alpha_1 = 80, & c_1 = -0.2 \\
A_2 = 0.8, & \omega_2 = 300, & \alpha_2 = 50, & c_2 = 0.0 \\
A_3 = 0.6, & \omega_3 = 40, & \alpha_3 = 60, & c_3 = 0.2
\end{array}
\]
We use SNNs with width $N=100$ and $\sin$ activation function: $\sum_{n=1}^N a_n\sin(w_n (x -b_n))$ where $w_n\sim \cU(-S, S)$ and $b_n\sim \cU(-1,1)$ are used for defining the linear PINN and initializing PINN. Here $S$ is the scaling parameter, which we vary. For the linear PINN, we impose the boundary condition as a constraint, while for the others, we use boundary regularization with weight $\lambda = 250$. For the linear PINN, we evaluate the Gram system~\eqref{eq_opt_KKT} with exact integrals, and for non-linear PINN models, we use the Adam optimizer with a fixed learning rate $8\times 10^{-4}$. Based on $20$ independent experiments with random initializations, we report the mean relative $L_2$ loss in (a) and show approximation results in (b)-(d) with different scaling. 

\begin{figure}
\centering
	\begin{tabular}{cc}
		(a)&(b)\\
\includegraphics[width=0.45\textwidth]{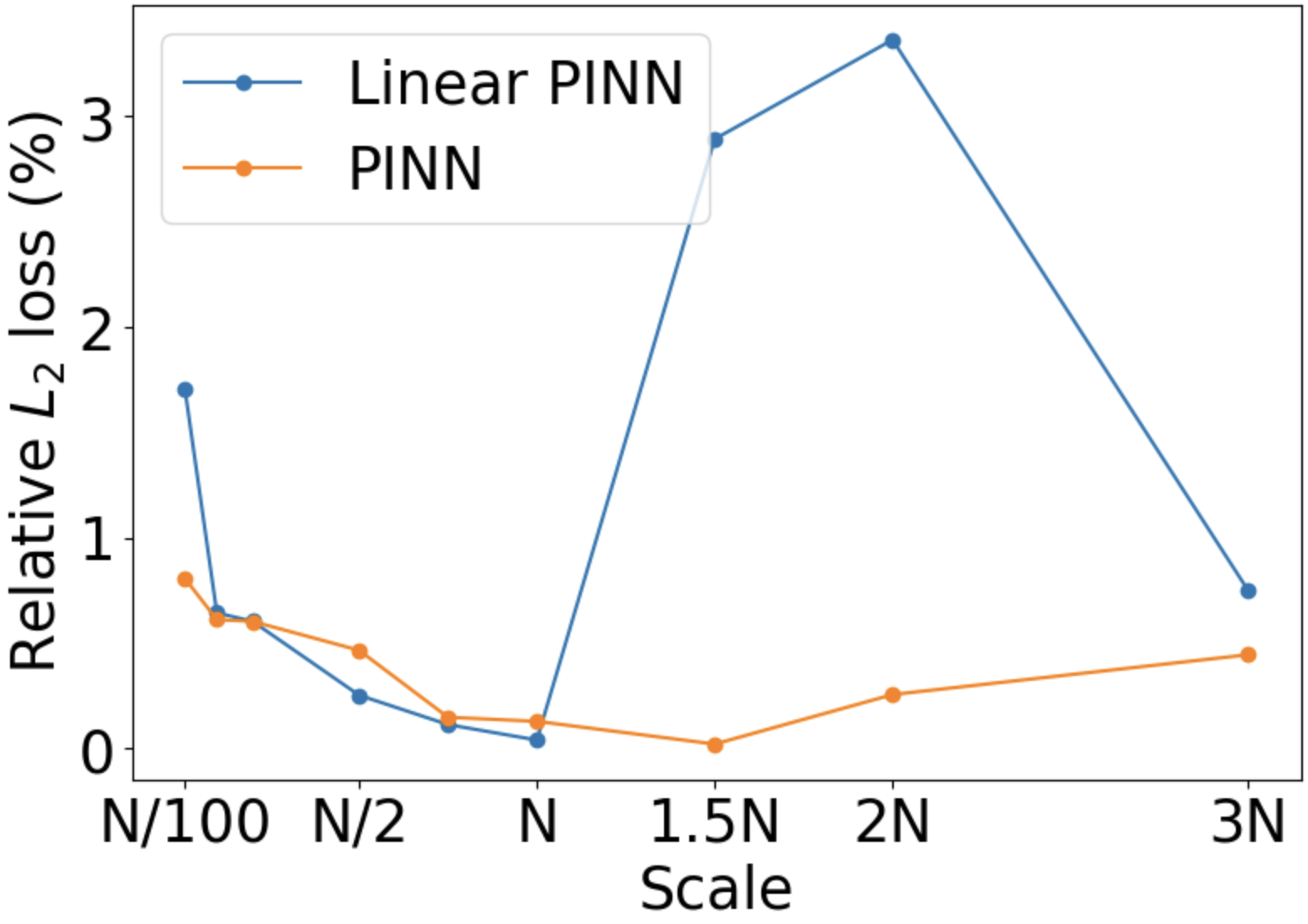}&
\includegraphics[width=0.45\textwidth]{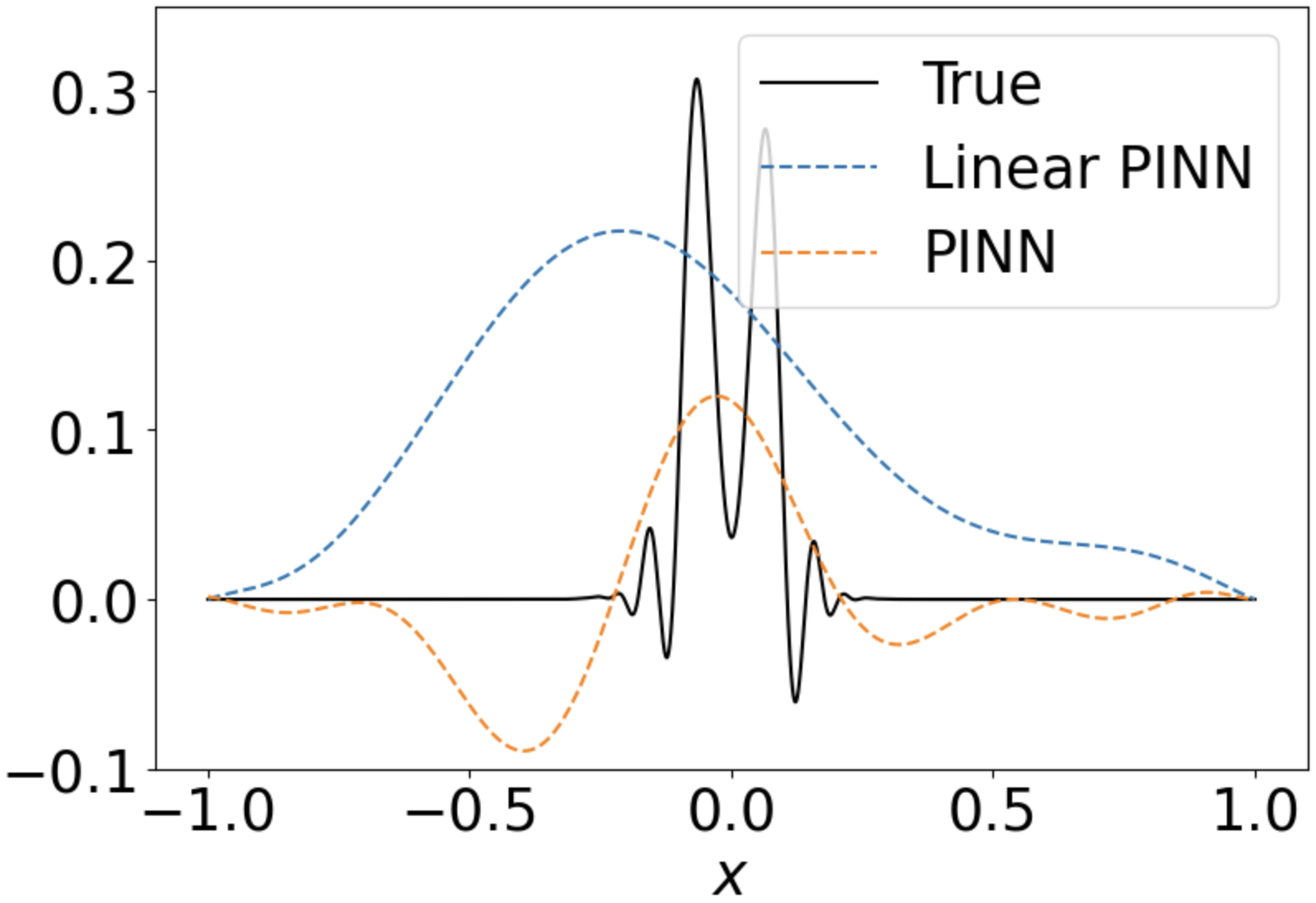}\\
(c)&(d)\\
\includegraphics[width=0.45\textwidth]{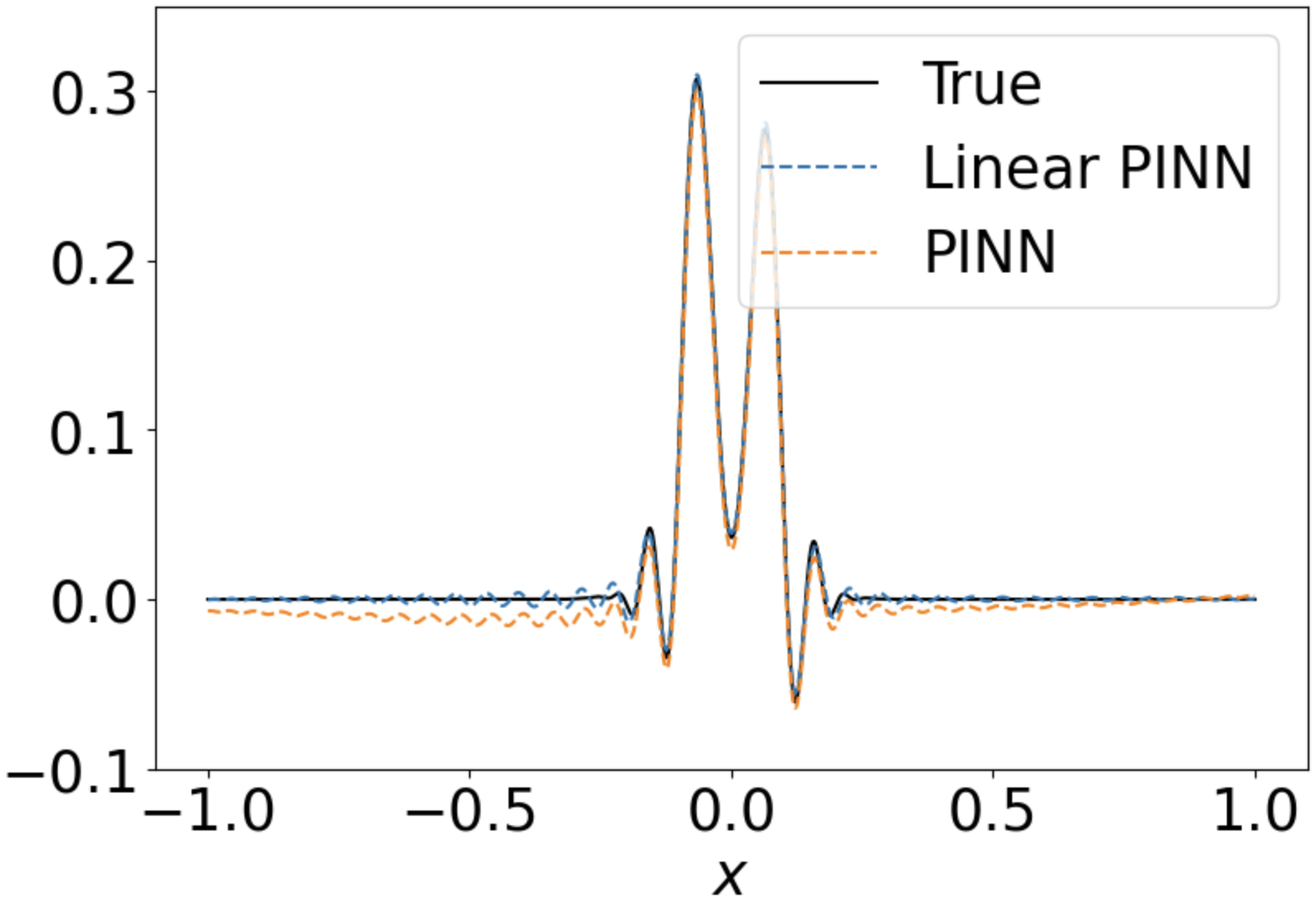}&
\includegraphics[width=0.45\textwidth]{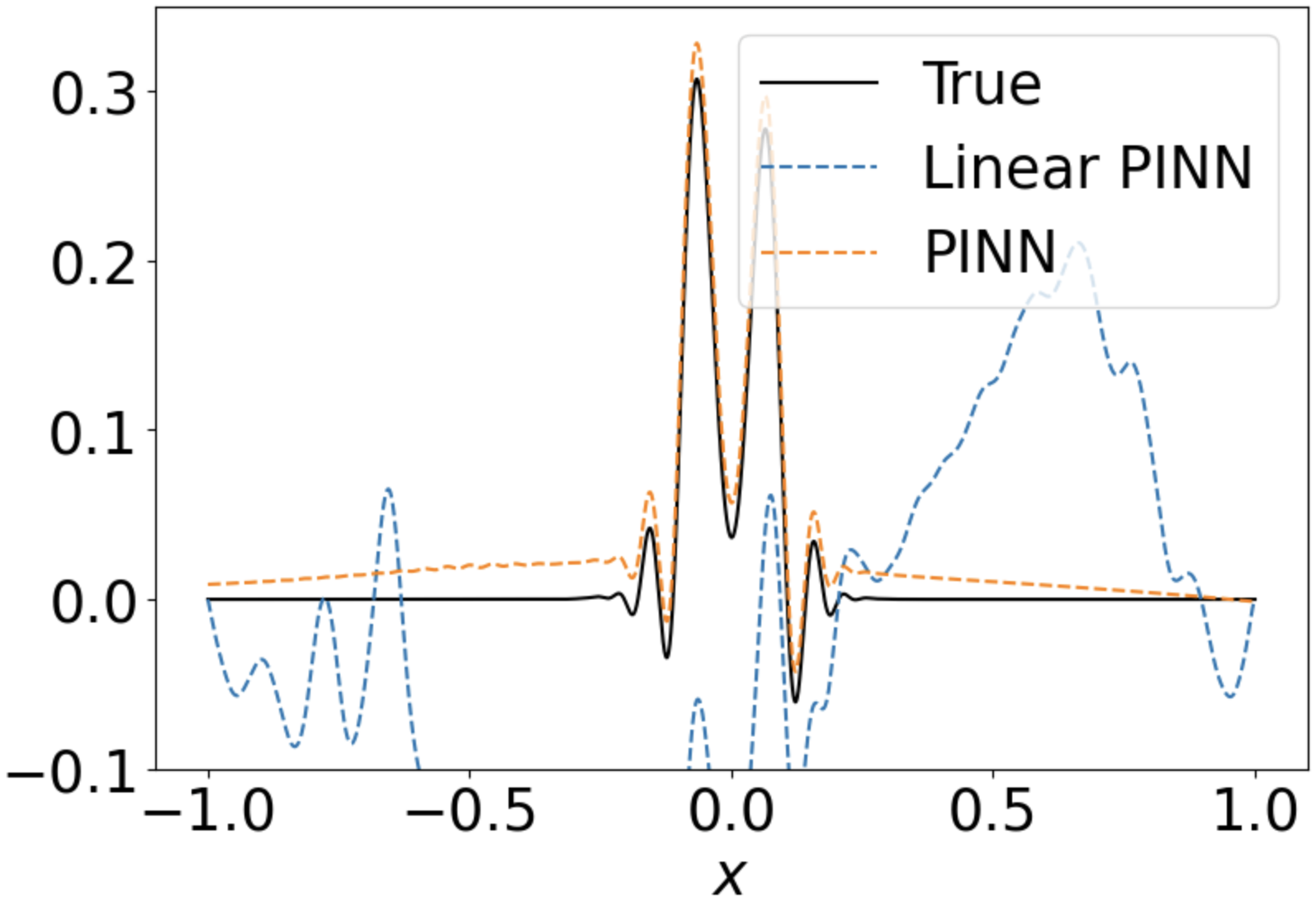}
\end{tabular}
\caption{(Varying coefficients) Comparison under different scaling ($S$) between the linear PINN with exact integral and fully trainable PINN with fixed sampling (each epoch, the same $500$ points are used for the training) (a) Relative $L_2$ loss of the approximations as $S$ increases. (b) Solution approximation when $S=N/100$, (c) $S=N$, and (d) $S=2N$.  All the networks are initialized or fixed  as $\sum_{n=1}^N a_n\sin(w_n (x -b_n))$ where $w_n\sim \cU(-S, S)$ and $b_n\sim \cU(-1,1)$. For PINN, the number of epochs is set to be $20000$, the learning rate is fixed at $8\times 10^{-4}$, and $\lambda=250$ is chosen as the boundary regularization parameter.}\label{fig_comparison_pinn_varying}
\end{figure}

The main observations are: 
\begin{enumerate}
\item When using linear PINNs with non-homogeneous activation functions and a direct solver,  scaling makes a difference. When using a random basis with appropriate scaling, which depends on the network width and activation function as discussed in the previous section, as long as the set of basis spans a space that can approximate the solution well, the performance is as good as expected. When under-scaled in the initial parametrization, due to the lack of diversity of the basis in frequencies, the representation is ill-conditioned and has a strong frequency bias; it can not cover a broad range of frequencies accurately and stably. When over-scaled, the set of random bases cannot cover a continuous range of scales and creates gaps in between. In both scenarios, the performance can be unsatisfactory. For linear representation, due to the lack of adaptivity, the curse of dimensionality cannot be avoided. 
\item 
For fully trainable SSNs, initial scaling also makes a difference. More importantly, when a gradient-descent-based algorithm is used to solve the nonlinear and non-convex optimization problem, the performance is sensitive to ill-conditioning. If the initial parametrization is under-scaled, due to the strong ill-conditioning in the representation, the optimization is not effective in achieving good accuracy. When the scaling is increased, the SNN representation becomes less ill-conditioned and biased, which makes the optimization more effective and can achieve better performance. An interesting phenomenon is that even in the over-scaling regime for linear representation, where the initial random basis can not span a linear space to approximate the solution well, the nonlinear representation with fully trainable parameters shows adaptivity, i.e., the initial set of random basis is updated adaptively to the solution and produces a satisfactory result. This shows ill-conditioning is a key difficulty in achieving effective adaptivity when using gradient-based optimization methods in practice. 
\item  Although the best performance of linear representation, i.e., linear PINN, and nonlinear representation with fully trainable parameters is comparable when using networks of the same size, the nonlinear representation requires much more computation cost vs linear representation due to three times as many parameters and a nonconvex optimization. However, compared to a well-structured FEM, linear PINN using a random basis is much more computationally costly due to the requirement to form the dense Gram matrix and then solve an ill-conditioned dense system, which lacks an effective preconditioner. 
\end{enumerate}

These experiments demonstrate that scaling is crucial for mitigating the spectral bias inherent in neural networks. We emphasize that the linear PINN, as a random feature model, can be preferable to its fully trainable SNN counterpart when an appropriate scaling factor and activation function are used. Its key advantage is computational efficiency: finding a solution only requires solving a linear system, which is far more efficient than a gradient-descent-based optimization needed for non-convex, nonlinear problems.

\section{Discussion and Conclusion }\label{sec_conclude}

This paper presents a systematic study on the behaviors expected when solving elliptic PDEs using shallow neural networks (SNNs), or two-layer networks. For both PINN and DRM, our analysis and experiments show that ill-conditioning and the frequency bias \textit{against} high frequencies inherited from SNNs with ReLU power activations dominate the spectral bias \textit{for} high frequencies induced by the underlying differential operator. 

Our mathematical and numerical study identifies two causes of failure for linear PINNs and DRM with power ReLU activations: (1) ill-conditioned Gram matrices that amplify machine rounding errors, which worsen with smoother activations and wider networks; and (2) the frequency bias inherited from the NN representations, which hinders the approximation of high-frequency components.

We then show that \textit{proper} scaling of activation functions (other than powers of ReLU) mitigates ill-conditioning and frequency bias, which can speed up training and enhance performance. However, using proper scaling is equivalent to providing an initial set of random basis functions that can cover a continuous scale/frequency range compatible with the network width. Since scaling can only stretch the leading spectral range, leaving the rest of the spectrum to decay rapidly, the SNN representation is therefore still highly ill-conditioned and biased against the high frequencies beyond the initial random bases. Hence, achieving adaptivity beyond the initial SNN's representation capability is ineffective with gradient-descent-based optimization.  

This leads us to conclude that it is difficult and costly to gain a crucial adaptive advantage from nonlinear or fully trainable SNNs. Furthermore,  typical global activation functions are random bases that produce dense and ill-conditioned Gram matrices. Without an effective preconditioner, which itself is an interesting open problem, using a random basis will not be nearly as efficient as using well-developed FEMs. FEMs employ structured local bases that lead to sparse systems, for which effective preconditioners are readily available. 

The next important and challenging question is: what do we expect if multi-layer NNs are used to represent the solution? We will report our findings in our future work.

\section*{Acknowledgement}
The research of Roy Y. He is partially supported by NSFC grant 12501594, PROCORE-France/Hong Kong Joint Research Scheme by the RGC of Hong Kong and the Consulate General of France in Hong Kong (F-CityU101/24), StUp - CityU 7200779 from City University of Hong Kong, and the Hong Kong Research Grant
Council ECS grant 21309625. Hongkai Zhao is partially supported by the National Science Foundation through grant DMS-2309551. Yimin Zhong is partially supported by the National Science Foundation through grant DMS-2309530.

\appendix
\section{Proofs}
Here we collect proofs for some statements in this paper.
\subsection{Proof for Lemma~\ref{lemma_vanishing_derivative}}\label{proof_lemma_vanishing_derivative}
\begin{proof}
First, by taking $(p-1)$-times partial derivatives with respect to $x$, we have
\begin{equation*}
\frac{\partial^{p-1}}{\partial x^{p-1}}\cG_p(x,y) = (-1)^{p-1}p!\int_{-1}^1\ReLU( z-x)\ReLU^p( z-y)\,dz\;.
\end{equation*}
Then, taking additional two times partial derivatives with respect to $x$ gives
\begin{equation*}
\frac{\partial^{p+1}}{\partial x^{p+1}}\cG_p(x,y) =  (-1)^{p-1}p! \ReLU^p(x-y)\;.
\end{equation*}
Finally, taking another $(p+1)$-times partial derivatives gives
the desired equation.
\end{proof}
\subsection{Proof for Theorem~\ref{theorem_condition_number1}}\label{proof_theorem_condition_number1}
\begin{proof} The proof is similar to that of Theorem 2 in~\cite{zhang2025shallow}. To simplify the notations, we denote $\bG:=\bG_{\sigma}^{(0)}$ with $\sigma=\ReLU^p$ for some fixed integer $p$. Consider the decomposition
\begin{equation}
\bG = \bA_M+ \boldsymbol{R}_M
\end{equation}
where $\bA_M:=\boldsymbol{\Phi}_M\boldsymbol{\Sigma}_M\boldsymbol{\Phi}_M^\top$, $[\boldsymbol{\Phi}]_{i,j}= \varphi_{j,p}(b_i)$, $\boldsymbol{\Sigma}_{i,j}=\delta_{i,j}\mu_{i,p}$, and $[\boldsymbol{R}_{M}]_{i,j} = \sum_{m>M}\mu_{m,p}\varphi_{m,p}(b_i)\varphi_{m,p}(b_j)$, with $1\leq i,j\leq M$. Denote $\alpha_i$ the $i$-th eigenvalue of $\bA_M$. By Theorem 4.5.9 of~\cite{horn2012matrix}, 
$\alpha_i = \theta_i \mu_{i,p}$ for some $\theta_i\in [\sigma_{\min}^2,\sigma_{\max}^2]$ where $\sigma_{\min}$ and $\sigma_{\max}$ are the minimum and maximum singular values of $\boldsymbol{\Phi}_M$, thus we have
\begin{equation}
|\alpha_i-\frac{N}{2}\mu_{i,p}|= |\mu_{i,p}||\theta_i-\frac{N}{2}|\leq |\mu_{i,p}|\|\boldsymbol{\Phi}_M\boldsymbol{\Phi}_M^\top-\frac{N}{2}\bI_M\|_2\;.
\end{equation}
Combining with Weyl's inequality, we obtain
\begin{equation}
|\lambda_i-\frac{N}{2}\mu_{i,p}|\leq |\lambda_i-\alpha_i| + |\alpha_i - \frac{N}{2}\mu_{i,p}|\leq \|\bR_M\|_2+|\mu_{i,p}|\|\boldsymbol{\Phi}_M\boldsymbol{\Phi}_M^\top-\frac{N}{2}\bI_M\|_2\;.
\end{equation}
Note that $\|\bR_M\|_2=\cO(NM^{-(2p+1)}(2p+1)^{-1})$ and 
$\|\boldsymbol{\Phi}_M\boldsymbol{\Phi}_M^\top-\frac{N}{2}\bI_M\|_2=\cO(M^2)$; 
hence,
\begin{equation}
|\lambda_i-\frac{N}{2}\mu_{i,p}|\leq C\min\left(1, \frac{N}{(2p+1)M^{2p+1}}+\frac{M^2}{i^{2p+2}}\right)\;.
\end{equation}
Note that the left hand side of the above inequality does not depend on $M$, thus we can take $M = (Ni^{2p+2}/2)^{1/(2p+3)}$ and deduce that 
\begin{equation}
|\lambda_i-\frac{N}{2}\mu_{i,p}|=\begin{cases} 
\cO\left(\frac{2p+3}{2p+1} N^{\frac{2}{2p+3}} i^{-\frac{2(p+1)(2p+1)}{2p+3}}\right)&\text{if}~ q_{N,p}\leq i \leq N\\
\cO(1)&\text{if}~ 1\leq i < q_{N,p} 
\end{cases}
\end{equation}
where $q_{N,p} = \left(\frac{2p+3}{2p+1}\right)^{\frac{2p+3}{2(p+1)(2p+1)}} N^{\frac{1}{(p+1)(2p+1)}}$. This implies that as $N\to+\infty$, we have $\lambda_N = \cO(\frac{2p+3}{2p+1}N^{-2p})$ and $\lambda_1=\Theta(N)$; thus we conclude that the condition number $\kappa(\bG) =\Omega(N^{1+2p})$.
\end{proof}

\subsection{Proof for Theorem~\ref{theorem_sharp_eigenvalue}}\label{proof_theorem_sharp_eigenvalue}
\begin{proof}
Let us focus on the case when $k=0$, as the general cases are trivial; thereby, we omit the superscript $k$ in the following proof. By the relation~\eqref{eq_relate_G_sigma_G_FEM} and Theorem 6 in~\cite{hong2022activation}, the following
\begin{equation}
\frac{\lambda_{\min}(\bG_{\FEMsp})}{\lambda_{j}(\bW\bW^\top)}\leq \lambda_{N+1-j}\leq  \frac{\lambda_{\max}(\bG_{\FEMsp})}{\lambda_{j}(\bW\bW^\top)}
\end{equation}
holds for  $j=1,2,\dots,N$. Note that $\bW$ is a banded lower triangular Toeplitz matrix with a generating function $f(z)= (1-z)^{p+1}/(p!(\Delta x)^p)$. By the Avram–Parter Theorem~\cite{bottcher2005spectral}, as $N$ is sufficiently large, the singular values of $\bW$ are distributed as $f(e^{i\theta})$ for $\theta\in [0,2\pi)$, and this implies that 
$\lambda_j(\bW\bW^\top)\sim j^{-2(p+1)}$ as $N\to\infty$, and $\lambda_{\min}(\bG_{\FEMsp}) \sim 1$, $\lambda_{\max}(\bG_{\FEMsp}) \sim 1$ by the Riez-basis property (See~\cite{unser2005cardinal} Theorem 1). Therefore,  statement is proved.
\end{proof}

\subsection{Proof for Lemma~\ref{lemma_eigenvector_KF}}\label{proof_lemma_eigenvector_KF}
\begin{proof}
Let $\bx$ be an eigenvector of $\bG_F$ corresponding to eigenvalue $\lambda>0$. We prove the lemma by solving $\bb$  from 
\begin{equation}
\begin{cases}
\bG_F\bx +\bB^\top\bb = \mu\bx\\
\bB\bx = \mu\bb
\end{cases}
\end{equation}
for some nonzero $\mu\in\mathbb{R}$. From the first equation, we deduce $\bB^\top\bb = (\mu-\lambda)\bx$, and from the second equation, we get $\bb = \mu^{-1}\bB\bx$; hence, $\bB^\top\bB\bx = \mu(\mu-\lambda)\bx$.  We know that either $\bx\in\text{ran}(\bB^\top)$ or $\bx\in\ker(\bB)$. Since $\text{rank}(\bB)=2$,  $\text{ran}(\bB^\top)$ can contain at most two eigenvectors of $\bG_F$. Therefore, at least $N-2$ eigenvectors of $\bB_F$ are in $\text{ker}(\bB)$, which immediately implies that $\widetilde{\bx}^\top =[\bx^\top, \mathbf{0}^\top]$ are eigenvectors of $\bK_F$ with eigenvalue $\lambda$.
\end{proof}

\subsection{Proof for Lemma~\ref{corollary_trunc_SVD_error}}\label{proof_corollary_trunc_SVD_error}
\begin{proof} By Lemma~\ref{lemma_eigenvector_KF}, there are at least $N-2$ indices $i$ such that the $i$-th column of $\bU$ can be written as $[\bu_i^\top, 0, 0]^\top\in\mathbb{R}^{N+2}$, and $\bu_i$ is an eigenvector of $\bG_F$. Denote the set of such indices as $\cI_1$. Therefore,
\begin{align*}
\|\widetilde{\bz}_F^* - \bz_F^*\|^2 &=  \sum_{i:|\lambda_i(\bK_F)|<\varepsilon\lambda_1(\bK_F)} \lambda_i(\bK_F)^{-2} (\bu_i^\top\by_F+a_ic_L+b_ic_R)^2\\
&\geq\sum_{\substack{i\in\cI_1\\i:|\lambda_i(\bK_F)|<\varepsilon\lambda_1(\bK_F) }} \lambda_i(\bK_F)^{-2} (\bu_i^\top\by_F)^2 
\end{align*}
\end{proof}

\section{Explicit integral formulas for $\ReLU$ power Gram systems}\label{sec_explicit_formula}
Here we show explicit formulas for computing  entries of the Gram system~\eqref{eq_Gram_matrix} when $\sigma$ is $\ReLU^p$ for $p=1,\dots,4$.
\begin{itemize}
\item For $\ReLU$:  \begin{equation*}
\int (\omega_i x - \beta_i)(\omega_j x - \beta_j)\,dx = \frac{1}{3} \omega_i \omega_j x^3 - \frac{1}{2} \omega_i \beta_j x^2 - \frac{1}{2} \beta_i \omega_j x^2 + \beta_i \beta_j x
\end{equation*}
\item For $\ReLU^2$: \begin{equation*}
\begin{aligned}
\int \left((\omega_i x - \beta_i)(\omega_j x - \beta_j)\right)^2\,dx &=  \frac{1}{5} \omega_i^2 \omega_j^2 x^5 \\
& + \left(-\frac{1}{2} \omega_i^2 \omega_j \beta_j - \frac{1}{2} \omega_i \beta_i \omega_j^2\right) x^4 \\
& + \left(\frac{1}{3} \omega_i^2 \beta_j^2 + \frac{4}{3} \omega_i \beta_i \omega_j \beta_j + \frac{1}{3} \beta_i^2 \omega_j^2\right) x^3 \\
& + \left(-\omega_i \beta_i \beta_j^2 - \beta_i^2 \omega_j \beta_j\right) x^2 \\
& + \beta_i^2 \beta_j^2 x
\end{aligned}
\end{equation*}
\item For $\ReLU^3$: 
\begin{equation*}
\begin{aligned}
\int \left((\omega_i x - \beta_i)(\omega_j x - \beta_j)\right)^3\,dx &= \frac{1}{7} \omega_i^3 \omega_j^3 x^7 - \frac{1}{2} \omega_i^2 \omega_j^2 (\omega_i \beta_j + \beta_i \omega_j) x^6 \\
&+ \frac{3}{5} \omega_i \omega_j (\omega_i^2 \beta_j^2 + 3 \omega_i \beta_i \omega_j \beta_j + \beta_i^2 \omega_j^2) x^5 \\
&+ \frac{1}{4} (-\omega_i^3 \beta_j^3 - 9 \omega_i^2 \beta_i \omega_j \beta_j^2 - 9 \omega_i \beta_i^2 \omega_j^2 \beta_j - \beta_i^3 \omega_j^3) x^4 \\
&+ \beta_i \beta_j (\omega_i^2 \beta_j^2 + 3 \omega_i \beta_i \omega_j \beta_j + \beta_i^2 \omega_j^2) x^3 \\
&- \frac{3}{2} \beta_i^2 \beta_j^2 (\omega_i \beta_j + \beta_i \omega_j) x^2 + \beta_i^3 \beta_j^3 x.
\end{aligned}
\end{equation*}
\item For $\ReLU^4$: 
\begin{equation*}
\begin{aligned}
&\int \left((\omega_i x - \beta_i)(\omega_j x - \beta_j)\right)^4\,dx = \frac{1}{9} \omega_i^4 \omega_j^4 x^9 - \frac{1}{2} \omega_i^3 \omega_j^3 x^8 (\omega_i \beta_j + \beta_i \omega_j)\\
&+\frac{2}{7} \omega_i^2 \omega_j^2 x^7 (3 \omega_i^2 \beta_j^2 + 8 \omega_i \beta_i \omega_j \beta_j + 3 \beta_i^2 \omega_j^2) \\
& + \frac{2}{3} \beta_i^2 \beta_j^2 x^3 (3 \omega_i^2 \beta_j^2 + 8 \omega_i \beta_i \omega_j \beta_j + 3 \beta_i^2 \omega_j^2) - \frac{2}{3} \omega_i \omega_j x^6 (\omega_i^3 \beta_j^3\\
&+ 6 \omega_i^2 \beta_i \omega_j \beta_j^2 + 6 \omega_i \beta_i^2 \omega_j^2 \beta_j + \beta_i^3 \omega_j^3)\\
&- \beta_i \beta_j x^4 (\omega_i^3 \beta_j^3 + 6 \omega_i^2 \beta_i \omega_j \beta_j^2 + 6 \omega_i \beta_i^2 \omega_j^2 \beta_j + \beta_i^3 \omega_j^3)\\
&+ \frac{1}{5} x^5 (\omega_i^4 \beta_j^4 + 16 \omega_i^3 \beta_i \omega_j \beta_j^3 + 36 \omega_i^2 \beta_i^2 \omega_j^2 \beta_j^2 + 16 \omega_i \beta_i^3 \omega_j^3 \beta_j + \beta_i^4 \omega_j^4)\\
&- 2 \beta_i^3 \beta_j^3 x^2 (\omega_i \beta_j + \beta_i \omega_j) + \beta_i^4 \beta_j^4 x.
\end{aligned}
\end{equation*}
\end{itemize}

For the right hand side~\eqref{eq_Gram_vector}, we shall assume that $f$ is of the form
\begin{equation*}
f(x) = \sum_{k=1}^K m_k\sin( a_k x + \varphi_k)\;,
\end{equation*}
and the corresponding integral formulas are
\begin{itemize}
\item For $\ReLU^0$: 
\begin{equation*}
\int \sin(a_kx+\varphi_k)\,dx =-\frac{\cos(a_k x + \varphi_k)}{a_k}
\end{equation*}
\item For $\ReLU$:
\begin{equation*}
\int \sin(a_kx+\varphi_k)(\omega_i x - \beta_i)\,dx = \frac{a_k (\beta_i - \omega_i x) \cos(a_k x + \varphi_k) + \omega_i \sin(a_k x + \varphi_k)}{a_k^2} 
\end{equation*}
\item For $\ReLU^2$:
\begin{equation*}
\begin{aligned}
&\int \sin(a_kx+\varphi_k)(\omega_i x - \beta_i)^2\,dx=\\
&\frac{2 a_k \omega_i (\omega_i x - \beta_i) \sin(a_k x + \varphi_k) - \cos(a_k x + \varphi_k) \left(\omega_i^2 (a_k^2 x^2 - 2) - 2 a_k^2 \omega_i \beta_i x + a_k^2 \beta_i^2\right)}{a_k^3}
\end{aligned}
\end{equation*}
\item For $\ReLU^3$:
\begin{equation*}
\begin{aligned}
&\int \sin(a_kx+\varphi_k)(\omega_i x - \beta_i)^3\,dx = \\
&\cos(a_k x + \varphi_k) \left(\frac{6 \omega_i^2 (\omega_i x - \beta_i)}{a_k^3} + \frac{(\beta_i - \omega_i x) (\omega_i^2 x^2 - 2 \omega_i \beta_i x + \beta_i^2)}{a_k}\right)\\
&+ \sin(a_k x + \varphi_k) \left(\frac{3 \omega_i^3 x^2 - 6 \omega_i^2 \beta_i x + 3 \omega_i \beta_i^2}{a_k^2} - \frac{6 \omega_i^3}{a_k^4}\right)
\end{aligned}
\end{equation*}
\item For $\ReLU^4$:

\begin{equation*}
\begin{aligned}
&\int \sin(a_kx+\varphi_k)(\omega_i x - \beta_i)^4\,dx =\\
&\cos(a_k x + \varphi_k) \Big(-\frac{24 \omega_i^4}{a_k^5} + \frac{12 \omega_i^4 x^2 - 24 \omega_i^3 \beta_i x + 12 \omega_i^2 \beta_i^2}{a_k^3} + \\
&\frac{-\omega_i^4 x^4 + 4 \omega_i^3 \beta_i x^3 - 6 \omega_i^2 \beta_i^2 x^2 + 4 \omega_i \beta_i^3 x - \beta_i^4}{a_k}\Big)
+ \\
&(\omega_i x - \beta_i) \sin(a_k x + \varphi_k) \left(\frac{4 \omega_i (\omega_i^2 x^2 - 2 \omega_i \beta_i x + \beta_i^2)}{a_k^2} - \frac{24 \omega_i^3}{a_k^4}\right)
\end{aligned}
\end{equation*}

\end{itemize}

In the case of spline FEM, we use the following result to evaluate the associated Gram matrices. \begin{lemma}
For any $\Delta x>0$, denote $b_p(t) = B_p(t/\Delta x)$ where $B_p$ is the cardinal B-spline basis of degree $p$ supported on $[0,p+1]$. Then for any $i,j\in\mathbb{Z}$, we have
\begin{equation*}
\int_{\bbR} b_p'(t+i\Delta x)b_p'(t+j\Delta x)\,dt=-(\Delta x)^{-1} B_{2p+1}^{''}(p+1+i-j)\;.
\end{equation*}
\end{lemma}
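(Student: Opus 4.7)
The plan is to reduce the identity to a convolution/autocorrelation identity for the unit-spacing cardinal B-spline $B_p$, then use the classical fact that $B_p \ast B_p$ (after accounting for the asymmetric support) is another cardinal B-spline. First I would rescale the integral: since $b_p(t) = B_p(t/\Delta x)$, the chain rule gives $b_p'(t) = (\Delta x)^{-1} B_p'(t/\Delta x)$, and substituting $u = t/\Delta x$ turns the left-hand side into
\begin{equation*}
\int_{\bbR} b_p'(t+i\Delta x)\,b_p'(t+j\Delta x)\,dt \;=\; (\Delta x)^{-1}\!\int_{\bbR} B_p'(u+i)\,B_p'(u+j)\,du.
\end{equation*}
A further translation $v=u+i$ reduces this to $(\Delta x)^{-1} h(j-i)$, where $h(s) := \int_{\bbR} B_p'(v)\,B_p'(v+s)\,dv$. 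So the whole task reduces to showing $h(s) = -B_{2p+1}''(p+1-s)$ for every real $s$ (the lemma is then the special case $s = j-i$).

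Next I would establish the companion identity for $B_p$ itself, namely
\begin{equation*}
g(s) \;:=\; \int_{\bbR} B_p(v)\,B_p(v+s)\,dv \;=\; B_{2p+1}(p+1-s).
\end{equation*}
This follows from two standard facts about cardinal B-splines: (i) the convolution relation $B_p \ast B_p = B_{2p+1}$, which is a direct consequence of the recursion $B^{p}(x) = \int_0^1 B^{p-1}(x-t)\,dt$ and iteration; and (ii) the symmetry $B_p(x) = B_p(p+1-x)$, which lets one rewrite the autocorrelation integral as a convolution evaluated at $p+1-s$. Both facts are textbook (e.g.\ de~Boor, Unser), so I would cite them rather than reprove them.

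Finally I would pass from $g$ to $h$. The cleanest route is Fourier: since $B_p \in L^2(\bbR)$ and $B_p' \in L^2(\bbR)$ for $p \geq 1$, Parseval gives
\begin{equation*}
g(s) = \frac{1}{2\pi}\!\int_{\bbR} e^{-i\xi s}|\widehat{B_p}(\xi)|^2\,d\xi, \qquad h(s) = \frac{1}{2\pi}\!\int_{\bbR} e^{-i\xi s}\xi^2 |\widehat{B_p}(\xi)|^2\,d\xi,
\end{equation*}
so $h = -g''$ as tempered distributions, and hence pointwise once $p \geq 1$ since $B_{2p+1} \in C^{2p-1}$. Substituting $g(s) = B_{2p+1}(p+1-s)$ yields $h(s) = -B_{2p+1}''(p+1-s)$, and plugging back completes the lemma. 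An equivalent real-variable argument is to integrate by parts once in the definition of $h$ and pass the derivative across the convolution, but since $B_p'$ may have jumps when $p=1$, the distributional/Fourier framing is the safest way to handle all $p \geq 1$ uniformly.

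The only place requiring care is the regularity needed to differentiate $g$ twice and to justify the transfer of derivatives in the integration by parts; for $p = 1$ the second derivative of $B_{2p+1} = B_3$ is still a continuous function, so evaluating $B_{2p+1}''(p+1+i-j)$ at integer arguments causes no ambiguity. This, together with keeping the $p+1-s$ shift straight through the symmetry argument, is the main obstacle to watch; the rest is bookkeeping.
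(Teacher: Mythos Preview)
Your proposal is correct and follows essentially the same route as the paper: both reduce via translation and rescaling to the unit-spacing identity $\int_{\bbR} B_p'(t)B_p'(t+(j-i))\,dt = -B_{2p+1}''(p+1+i-j)$. The only difference is that the paper simply cites this identity (as Lemma~4 of Garoni et al.), whereas you supply a self-contained proof of it via the convolution relation $B_p\ast B_p=B_{2p+1}$, the symmetry $B_p(x)=B_p(p+1-x)$, and the Fourier argument $h=-g''$.
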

\begin{proof}
By change of variables, we have
\begin{align*}
&\int_{\bbR} b_p'(t+i\Delta x)b_p'(t+j\Delta x)\,dt\\
&=\int_{\bbR}b_p'(t)b_p'(t+(j-i)\Delta x)\,dt\\
&=(\Delta x)^{-2}\int_{\bbR}B'_p(t/\Delta x)B'_p(t/\Delta x+(j-i))\,dt\\
&=(\Delta x)^{-1}\int_{\bbR}B'_p(t)B'_p(t+(j-i))\,dt\\
&=-(\Delta x)^{-1} B_{2p+1}^{''}(p+1+i-j)
\end{align*}
and the last equality follows from Lemma 4 of~\cite{garoni2014spectrum}.
\end{proof}

\bibliographystyle{plain}
\bibliography{sample}
\end{document}